\renewenvironment{abstract}
{\small\vspace{-1em}
\begin{center}
\bfseries\abstractname\vspace{-.5em}\vspace{0pt}
\end{center}
\list{}{
\setlength{\leftmargin}{0.6in}%
\setlength{\rightmargin}{\leftmargin}}%
\item\relax}
{\endlist}
\newtheorem{theorem}{Theorem}[section]
\newtheorem{proposition}[theorem]{Proposition}
\newtheorem{definition}[theorem]{Definition}
\newtheorem{lemma}[theorem]{Lemma}
\newtheorem*{lemma*}{Lemma}
\newtheorem{corollary}[theorem]{Corollary}
\newtheorem{question}[theorem]{Question}
\newtheorem{remark}[theorem]{Remark}
\newtheorem{observation}[theorem]{Observation}
\NewDocumentCommand{\odtodo}{O{} m}{\todo[color=green!90, #1]{#2}} 
\NewDocumentCommand{\odinfo}{O{} m}{\todo[color=green!2, #1]{#2}}
\NewDocumentCommand{\bctodo}{O{} m}{\todo[color=orange!20, #1]{#2}} 
\NewDocumentCommand{\bcinfo}{O{} m}{\todo[color=orange!5, #1]{#2}}
\NewDocumentCommand{\jftodo}{O{} m}{\todo[color=pink!80, #1]{#2}} 
\NewDocumentCommand{\jfinfo}{O{} m}{\todo[color=pink!50, #1]{#2}}
\newcommand{\mgnt}[1]{\marginpar{\raggedleft \textcolor{black!70}{\emph{#1}}}}
\newcommand{\new}[1]{\emph{#1}\mgnt{#1}}
\setlist{ % for regular paragraph-like indentation
  listparindent=\parindent,
  parsep=0pt,
}
\newcommand{\ct}{{\sf T}} % colored tree
\newcommand{\st}{{\sf S}} % skeleton tree
\newcommand{\rt}{{\sf R}} % ribbed-tree
\newcommand{\N}{\mathbb{N}}
\newcommand{\intv}[2]{\left \{ #1,\dots, #2\right \}}
\newcommand{\depth}{{\sf depth}}
\newcommand{\zone}{{\sf zone}}
\newcommand{\range}{{\sf range}}
\newcommand{\ancestor}{{\sf zone}\text{-}{\sf ancestor}}
\newcommand{\trace}{{\sf trace}}
\newcommand{\prefix}[2]{{#1}{#2}}
\newcommand{\suffix}[2]{{#2}{#1}}
\newcommand{\subpath}[3]{{#2}{#1}{#3}}
\newcommand{\rank}{{\sf rank}}
\DeclareMathOperator{\reach}{{\sf reach}}
\DeclareMathOperator{\lip}{\sf lip}
\DeclareMathOperator{\lp}{\sf lp}
\newcommand{\ord}{\sqsubseteq}
\newcommand{\triangleseq}{{\sf triangle}\text{-}{\sf seq}}
\newcommand{\specialseq}{{\sf special}\text{-}{\sf seq}}
\newcommand{\casestylei}[1]{\textbf{#1}}
\newcommand{\casestyleii}[1]{\textcolor{black!70}{\textbf{#1}}}
\newcommand{\casestyleiii}[1]{\textcolor{black!60}{\textbf{#1}}}
\renewcommand{\next}{\textsf{next}}
\title{
A quasi-optimal upper bound for induced\\ paths in sparse graphs}
\date{July 30, 2025}
\author[1]{Basile Couëtoux}
\author[1]{Oscar Defrain}
\author[2]{Jean-Florent Raymond}
\affil[1]{Aix-Marseille Université, CNRS, LIS, Marseille, France.}
\affil[2]{CNRS, ENS de Lyon, Université Claude Bernard Lyon 1, LIP, UMR5668, Lyon, France}
\begin{document}

\maketitle

\begin{abstract}
    In 2012, Ne\v{s}et\v{r}il and Ossona de Mendez proved that graphs of bounded degeneracy that have a path of order $n$ also have an induced path of order $\Omega(\log \log n)$.
    In this paper we give an almost matching upper bound by describing, for arbitrarily large values of~$n$, 2-degenerate graphs that have a path of order $n$ and where the longest induced paths have order $O((\log \log n)^{1+o(1)})$.
    %In this paper we give an almost matching upper bound by describing, for arbitrarily large values of~$n$, a 2-degenerate graph that has a path of order $n$ and where all induced paths have order $O(\log \log n \cdot \log \log \log n)$.

    \vskip5pt\noindent{}{\bf Keywords:} long induced paths, unavoidable induced subgraphs, sparse graphs, degeneracy.
\end{abstract}

\section{Introduction}

Ramsey's Theorem, proved in 1930, is a cornerstone of extremal combinatorics. In in its finite
%graph theoretic
form, it states that every large enough graph excluding a given clique contains a large independent set.
There are several analogues of this result where the desired objects are not independent sets but other induced substructures and where the considered graphs should satisfy some additional properties. A simple example is the following theorem for connected graphs.
\begin{theorem}[see {\cite[(5.3)]{ding1996unavoidable}}%
%or {\cite[Proposition~9.4.1]{diestel2025graph}}
]
    For every integer $t \geq 1$, there is an increasing function $f$ such that every connected $K_t$-free graph on $n$ vertices contains as induced subgraph $K_{1, f(n)}$ or $P_{f(n)}$.
\end{theorem}

There are also further extensions for other substructures than induced subgraphs and various requirements of connectivity, see \cite{allred2022unavoidable} and the references therein.
In the 80's, Galvin, Rival, and Sands proved the following Ramsey-type result for \emph{traceable graphs}, i.e., graphs that have a Hamiltonian path.

\begin{theorem}[{\cite[Theorem 4]{galvin1982ramsey}}]\label{th:grs}
For every integer $t\geq 1$, there is an unbounded function $h_t$ such that every $K_{t,t}$-subgraph free traceable graph on $n$ vertices contains an induced path of order at least $h_t(n)$.
\end{theorem}

This can also be restated as follows: \textit{if a $K_{t,t}$-subgraph free graph has a path of order $n$, then it has an induced path of order at least $h_t(n)$}.
Note that cliques and bicliques do not satisfy such a statement: they have a path visiting all their vertices yet no induced path of order more than 2 or 3, respectively. Hence, for hereditary graph classes, being $K_{t,t}$-subgraph free is necessary in the above statement.\footnote{If a graph contains a large biclique as a subgraph, it contains either a large biclique as induced subgraph, or a large clique (by Ramsey's theorem).}

The question motivating this work is to find good bounds on the function $h_t$ of Theorem~\ref{th:grs}.
In addition to being interesting on its own, this question was asked in \cite{galvin1982ramsey} already in the special case $t=2$.
As additional motivation, let us describe two other research lines that would benefit from progress on this question.
The first one is algorithmic. Theorem~\ref{th:grs} was rediscovered thirty years later in \cite{atminas2012linear} (with worse bounds) and used to design a parameterized algorithm for the Biclique problem. Any improved bound on the growth rate of $h_t$ would result in an improved complexity bound.
The second one is related to treedepth, an important structural graph parameter in the theory of sparse graphs.\footnote{Because we will not use treedepth in this paper, we refrain from providing the definition and refer the interested reader to \cite[Chapter~6]{nevsetvril2012sparsity} instead.} It is well-known that a graph with no path of order $n$ has treedepth at most $n$ (see, e.g.,  \cite[Proposition~6.1]{nevsetvril2012sparsity}). Using Theorem~\ref{th:grs} we can deduce the following induced analogue: a $K_{t,t}$-subgraph free graph with no induced path of order $h_t(n)$ has treedepth at most~$n$. Here again improving bounds on $h_t(n)$ would strengthen the statement.

In \cite{galvin1982ramsey}, the authors of Theorem~\ref{th:grs} did not provide an explicit expression of the function $h_t$, but it can be extracted from their proof and the currently best-known bounds on the multicolor Ramsey numbers that $h_t(n) = \Omega_t((\log \log \log n)^{1/3})$.
This result has recently been improved to $h_t(n) = \Omega_t((\log \log n)^{1/5})$ by Duron, Esperet, and the last author \cite{duron2024long}. Very recently, Hunter, Milojević, Sudakov, and Tomon gave in \cite{hunter2024long} an elegant and short proof that \[h_t(n) = \Omega_t(\log \log n / \log\log \log n).\] At the time of writing this is the best-known lower-bound on $h_t$ and no matching upper-bound is known.

A related question was asked in the \textit{Sparsity} textbook \cite{nevsetvril2012sparsity}. Let us say that a graph $G$ is \emph{$k$-degenerate} if every subgraph of $G$ has a vertex of degree at most~$k$. Note that $k$-degenerate graphs are $K_{k+1,k+1}$-subgraph free, hence they are covered by the results above. In the aforementioned book, 
Ne\v{s}et\v{r}il and Ossona de Mendez gave a simple yet clever proof of the following statement. (Unless otherwise specified, all logarithms are binary.)

\begin{theorem}[\cite{nevsetvril2012sparsity}]\label{th:nodm}
Let $k \in \N$ and let $G$ be a $k$-degenerate graph. If $G$ has a path of order $n$, then $G$ has an induced path of order at least
$
{\log \log n}/{\log (k+1)}.
$ 
\end{theorem}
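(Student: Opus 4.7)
The plan is to prove the statement by induction on $m$, establishing the sharper form: any $k$-degenerate graph containing a path of order at least $2^{(k+1)^{m-1}}$ contains an induced path of order $m$. The base case $m=1$ is immediate since any vertex is an induced path of order $1$.

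For the inductive step, I would use the $k$-degeneracy of $G$ to select a vertex $v \in V(P)$ of degree at most $k$ in $G$. The at most $k$ neighbors of $v$ split $V(P) \setminus N[v]$ into at most $k+1$ consecutive subpaths of $P$; take the longest one, call it $P'$, which has at least $\lfloor(n-k-1)/(k+1)\rfloor$ vertices. A first attempt applies the inductive hypothesis to $G[V(P')]$ (which is $k$-degenerate and contains $P'$ as a Hamilton path) to obtain an induced path $Q$ of order $m-1$, and then prepends $v$ together with a connecting vertex $w \in N(v)$ (the path-neighbor of $v$ adjacent to $P'$), yielding an induced path of order $m$.

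The main obstacle is that the connector $w$ may carry chord-edges to vertices of $Q$, which would break the induced property of the extension. Handling this requires further restricting $P'$ to a subpath disjoint from the $G$-neighborhood of $w$ inside $P'$; since $w$ may have arbitrarily high degree, a straightforward restriction by removing $N(w) \cap V(P')$ is not good enough, and a second application of $k$-degeneracy is needed to choose the bridge $w$ with a controlled neighborhood. Iterating this argument should lead to a recurrence of the form $f(n) \geq 1 + f(n^{1/(k+1)})$ rather than the naive $f(n) \geq 1 + f(n/(k+1))$, solving to the stated $f(n) \geq \log\log n / \log(k+1)$. The most delicate step will be strengthening the inductive statement so that both the choice of the recursive subpath $P'$ and the choice of the bridge $w$ preserve compatibility with all previously added vertices of the induced path under construction---perhaps by tracking a distinguished boundary vertex of the working subpath that must be the starting point of the recursively found induced path.
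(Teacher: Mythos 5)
First, a point of reference: the paper does not prove Theorem~\ref{th:nodm}; it is quoted from the \emph{Sparsity} textbook \cite{nevsetvril2012sparsity}, so there is no in-paper argument to compare yours against, and I can only judge the proposal on its own terms.

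As written, your proposal is an outline whose decisive step is missing. You set up the right quantitative induction (a path of order $2^{(k+1)^{m-1}}$ forces an induced path of order $m$), you correctly observe that the naive recursion $f(n)\ge 1+f(n/(k+1))$ coming from a degree-$\le k$ vertex does not survive the extension step, and you correctly state that the recursion one actually needs is $f(n)\ge 1+f(n^{1/(k+1)})$. But everything that makes the theorem true is hidden behind ``a second application of $k$-degeneracy is needed'', ``iterating this argument should lead to'', and ``the most delicate step will be''. Concretely: (i) the bridge $w$ is essentially forced --- it must be the vertex of $N(v)$ bordering $P'$ on $P$ --- so you have no freedom to choose it of small degree via degeneracy; (ii) deleting $N(w)\cap V(P')$ from $P'$ is self-defeating, since afterwards $w$ has no neighbor left in the region where you recurse, so $v,w,Q$ fails to be connected; and independently of chords, the unstrengthened inductive hypothesis gives no control over where $Q$ sits inside $P'$, so $w$ need not touch $Q$ at all; (iii) no mechanism is offered for why committing one vertex of the induced path should cost a $(k+1)$-th \emph{root} of the path length rather than a factor of $k+1$. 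That exponent is the entire content of the bound: a single low-degree vertex plus pigeonhole only ever yields linear shrinkage, and the doubly-logarithmic bound requires a genuinely stronger induction hypothesis (one that prescribes where the recursively found induced path starts and absorbs, in one step, the uncontrolled neighborhoods of the newly committed vertices), which you gesture at in your last sentence but never formulate. So the proposal identifies the right difficulty but does not resolve it; as it stands it is not a proof.
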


The also explicitly asked the question of finding the optimal bound.

\begin{question}[{\cite[Problem 6.1]{nevsetvril2012sparsity}}]\label{que:nodm}
For every $k\in \N$, what is the maximum function $f_k\colon \N\to \N$ such that every $k$-degenerate graph $G$ that has a path of order $n$ has an induced path of order at least $f_k(n)$?
\end{question}

More generally, for a graph class $\mathcal{C}$ we can define $f_{\mathcal{C}}$ as the maximum function such that, for every graph $G\in \mathcal{C}$ and every $n$, if $G$ has a path of order $n$ then $G$ has an induced path of order $f_{\mathcal{C}}(n)$. Several recent papers study the growth rate of $f_{\mathcal{C}}$ for various choices of $\mathcal{C}$ such as planar graphs or graphs of bounded treewidth, see \cite{duron2024long} for an overview.

Since \cite{nevsetvril2012sparsity} appeared, no improvement over the  lower bound of Theorem~\ref{th:nodm} was made. On the other hand, in 2023 the second and the third author gave the following upper bound: there are 2-degenerate graphs that have a path of order $n$ and where all induced paths have order $O((\log \log n)^2)$ \cite{defrain2024sparse}. Observe that 2-degenerate graphs are also $K_{t,t}$-subgraph free for every $t\geq 3$, hence this also provides an upper bound on the function of Theorem~\ref{th:grs}.

The main result of this paper is the following.

\begin{restatable}{theorem}{restatemaintheorem}
\label{thm:main-theorem}
    There is a constant $c$ such that for infinitely many integers $n$, there is a 2-degenerate graph $G$ that has a path of order $n$ and no induced path of order more than $c \cdot \log \log n \cdot \log \log \log n$. 
\end{restatable}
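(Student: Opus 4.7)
My plan is to construct a sequence $(G_k)_{k \geq 0}$ of 2-degenerate graphs, each equipped with a Hamiltonian path, such that $|V(G_k)|$ grows doubly exponentially in $k$ while $\lip(G_k) = O(k \log k)$. Setting $k = \Theta(\log \log n)$ then yields the stated bound. The base graph $G_0$ would be a single edge, and $G_{k+1}$ would be built from roughly $|V(G_k)|$ disjoint copies $H_1, \dots, H_{N_k}$ of $G_k$ by concatenating their Hamiltonian paths (joining the last vertex of $H_i$ to the first of $H_{i+1}$) and superimposing a new layer of ``level-$(k+1)$'' chords between vertices of distinct copies.

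Each new chord must respect the 2-degeneracy budget: in the natural ordering obtained from concatenation, a chord from $H_j$ to $H_i$ with $i<j$ uses one back-edge unit at its endpoint in $H_j$, so that endpoint must lie among the $G_k$-vertices with spare budget, which by 2-degeneracy form a linear-sized subset that I would track as an invariant of the construction. I would arrange the new chords in a hierarchical, binary-tree-like pattern on the $N_k$ copies, with about $\log N_k$ layers of pairings, and then aim to prove the recursion
\begin{equation*}
   \ell_{k+1} \leq \ell_k + C \log k
\end{equation*}
for the length $\ell_k$ of the longest induced path; telescoping this inequality gives $\ell_k = O(k \log k)$, which is exactly what is needed.

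The key step, and the main obstacle, is the induced-path analysis behind this recursion. An induced path in $G_{k+1}$ decomposes into maximal stretches inside individual copies, each of length at most $\ell_k$ by induction, together with transitions between copies via either Hamiltonian-path edges or level-$(k+1)$ chords. One must show that only a single copy can contribute a full $\ell_k$-stretch, while all other contributions together amount to $O(\log k)$ vertices. The intuition is that traversing too many consecutive copies on the Hamiltonian path activates some chord of the hierarchical pattern, producing an edge between two non-consecutive vertices of the alleged induced path and thus a contradiction. Turning this intuition into a clean inductive statement—likely by carrying additional invariants on the distribution of ``available'' chord endpoints inside each $G_k$, and by a careful case analysis on how an induced path interacts with each layer of the hierarchy—is where the bulk of the technical work lies, and where the improvement over the earlier $O((\log \log n)^2)$ construction must come from.
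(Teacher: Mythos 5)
Your proposal is a plan rather than a proof, and the step you yourself flag as the obstacle is precisely where it breaks down. The recursion $\ell_{k+1}\leq \ell_k+C\log k$ is unsupported: your hierarchical chord pattern on the $N_k\approx|V(G_k)|$ copies has about $\log N_k\approx 2^k$ layers, and an induced path climbing (or repeatedly entering and leaving) that hierarchy would a priori collect $\Theta(\log N_k)=\Theta(2^k)$ transition vertices, not $O(\log k)$. Telescoping the honest increment gives only $\ell_k=O(2^k)=O(\log\log |V(G_k)|\cdot 2^{k}/k)$-type garbage, i.e.\ nothing better than the trivial bound. Moreover, the decomposition ``one copy contributes a full $\ell_k$-stretch, the others contribute $O(\log k)$ in total'' is far stronger than what a chord layer can enforce: an induced path may visit many copies, each contributing only a few vertices, and bounding the \emph{number} of copies visited is exactly the hard combinatorial content that is missing. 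There is also an unresolved degeneracy issue: your new chords join old vertices of the copies, so the back-edge budget at each vertex must survive \emph{all} future iterations, and ``a linear-sized subset with spare budget'' is not an invariant you have shown can be maintained together with the blocking property.

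For comparison, the paper does not iterate a product construction at all. It builds a single graph from a complete binary tree of depth $2^\ell-1$ (giving $2^{2^\ell-1}-1$ nodes), replaces each tree edge by a ``barrier'': a subdivided path whose internal vertices are joined by \emph{ribs} to well-chosen ancestors, so that the degeneracy cost of blocking edges is absorbed by freshly created subdivision vertices rather than by old ones. The sequence of rib targets along each barrier is prescribed by a word $B(i,j)$ read off an auxiliary \emph{index-tree} on $\ell$ vertices, and the induced-path analysis shows that the trace of any induced path is confined to a strictly nested chain of subtrees of that index-tree, so its length is bounded by $\sum_i|B(k_i)|$ over the chain. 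The $\log\log\log n$ factor then comes from choosing the index-tree to be suitably unbalanced so that this sum is $O(\ell\log\ell)$ instead of the $O(\ell^{1.6})$ one gets from a balanced tree. Your sketch contains no analogue of the index-tree, the rib-to-ancestor mechanism, or the nesting argument, and without some such machinery the claimed $O(k\log k)$ bound does not follow.
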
 

This is an improvement over the $O((\log \log n)^2)$ bound of \cite{defrain2024sparse} and answers Question~\ref{que:nodm} up to a $\log\log \log n$ factor. Note that 2-degenerate graphs are also $k$-degenerate for any $k\geq 3$ hence the above result is indeed an upper bound on the funtion of Theorem~\ref{th:nodm}, for any value of $k$.  As a consequence, the bounds on the functions of the aforementioned theorems become:
\begin{itemize}
    \item for every integer $t\geq 3$, there is a constant $c_t > 0$ such that for every integer $n \geq 16$ the function $h_t$ of Theorem~\ref{th:grs} satisfies
    \[
    c_t \cdot \frac{\log \log n}{\log \log \log n} \leq h_t(n) \leq c \cdot \log \log n \cdot \log \log \log n;
    \]
    \item for every integers $k\geq 1$ and $n \geq 4$, the function $f_k$ of Question~\ref{que:nodm} satisfies:
    \[
    \frac{\log \log n}{\log (k+1)} \leq f_k(n) \leq c \cdot \log \log n \cdot \log \log \log n,
    \]
\end{itemize}
where $c$ is the constant from Theorem~\ref{thm:main-theorem}.

\paragraph{Our techniques.}
Our proof of Theorem~\ref{thm:main-theorem} is inspired by \cite{defrain2024sparse} but our construction and the arguments are much more involved. Let us give an overview of its different steps.

The first step is the explicit construction of the graph whose existence is posited by Theorem~\ref{thm:main-theorem}. This graph should be 2-degenerate, contain a long path $P$ and no long induced paths.
Observe that vertices outside of $P$ are not useful to show any of the desired properties.  So actually our construction will not contain such vertices: the path $P$ will span all the vertices, i.e., it will be a Hamiltonian path.
More precisely, for every integer $\ell$, we construct a $2$-degenerate graph $G$ of order $\smash{\Omega\big(2^{2^\ell}\big)}$ that admits a Hamiltonian path and in which all induced paths have order $O(\ell \log \ell)$.
The different steps of the construction can be sketched as follows (see Figure~\ref{fig:summary} for an overview):
% The construction of the graph $G$ whose existence is posited by our result can be sketched as follows (see Figure~\ref{fig:summary}):
%
\begin{enumerate}
    \item We start with a complete binary tree on $\smash{2^{2^\ell-1}}-1$ nodes, and that we call the \emph{skeleton-tree}. This tree is obviously $2$-degenerate, has the desired number of vertices, but there are two drawbacks: it has no Hamiltonian path and its longest induced paths are too long (of order $\Theta(2^\ell))$.
    \item This skeleton-tree is transformed into a a graph called \emph{ribbed-tree} obtained by adding a number of \emph{barriers}.
    Informally, barriers are paths replacing the edges of the skeleton-tree together with a set of edges called \emph{ribs} that connect nodes of such paths to certain well-chosen ancestor nodes in the tree. 
    The role of these barriers is to block induced paths. They are defined in a way so that the obtained graph remains 2-degenerate.
    Their definition uses an auxiliary tree called \emph{index-tree} and certain walks on this tree. 
    While it can be shown that the induced paths have now order $O(\ell \log \ell)$ as desired (for an appropriate choice of the index-tree), the obtained graph does not have a Hamiltonian path.

    \item Finally, we apply on the ribbed-tree an operation called \emph{blow-up} that results in a graph $G$ that contains a Hamiltonian path, remains $2$-degenerate, has $\smash{\Omega(2^{2^\ell})}$ vertices (hence a path of the same order), and where induced paths have order ${O(\ell \log \ell)}$, as desired.
\end{enumerate}

\begin{figure}[ht]
    \centering
    % \begin{overpic}[scale=1, grid=true]{summary.pdf}
    \begin{overpic}[scale=1, grid=false]{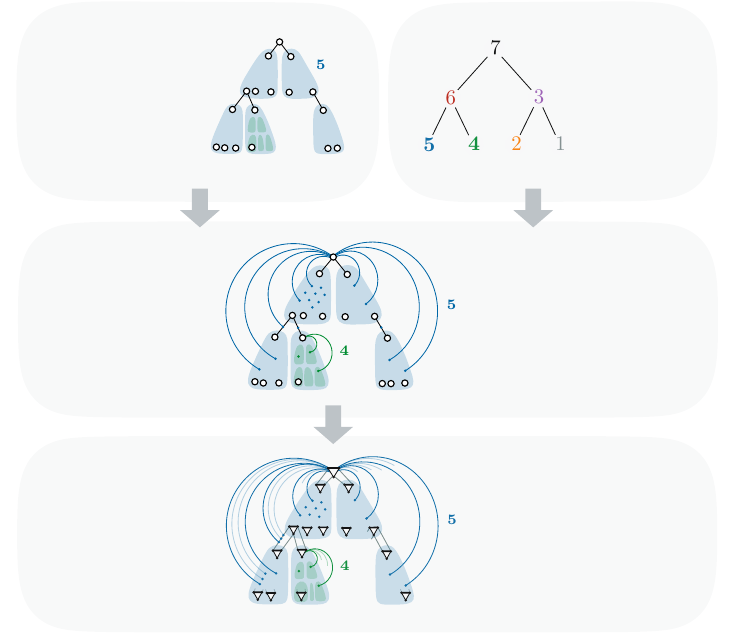}
        \put(4,72.5){\parbox{80pt}{\raggedleft
            Skeleton-tree $\st_\ell$\\[0.3em]
            $|V(\st_\ell)|=2^{2^\ell-1}-1$\\[0.3em]
            $\lip(\st_\ell)=O(2^\ell)$\\[0.3em]
            Section~\ref{sec:skeleton-tree}
        }}
        \put(79,72.5){\parbox{4cm}{
            Index-tree $\ct$\\[0.3em]
            $|V(\ct)|=\ell$\\[0.3em]
            Section~\ref{sec:index-tree}
        }}
        \put(66,43){\parbox{4cm}{
            Ribbed-tree $\rt(\ct)$\\[0.3em]
            $\smash{|V(\rt(\ct))|=\Omega\big(2^{2^\ell}\big)}$\\[0.3em]% je trouve qu'avec \left et \right LaTeX exagère le spacing, je change pour big ici + smash pour pas interférer avec le spacing de 0.3em voulu
            Section~\ref{sec:ribbed-tree}
        }}
        \put(66,14){\parbox{4cm}{
            Blow-up $G(\ct)$\\[0.3em]
            $\smash{\lp(G(\ct))=\Omega\big(2^{2^\ell}\big)}$\\[0.3em]% je trouve qu'avec \left et \right LaTeX exagère le spacing, je change pour big ici + smash pour pas interférer avec le spacing de 0.3em voulu
            $\lip(G(\ct))=O(\ell \log \ell)$\\[0.3em]
            2-degenerate\\[0.3em]
            Section~\ref{sec:blow-up}
        }}
    \end{overpic}
    \caption{The different steps in the construction of the graph satisfying Theorem~\ref{thm:main-theorem}, detailed in Section~\ref{sec:construction}, starting from an arbitrary integer $\ell$. }
    % Caption too much vis à vis du text en dessous ?
    % We point that except for $\st_\ell$ that is uniquely determined by $\ell$, the other structures depend on the choice of $\ct$. 
    % The value of $\lip(G(\ct))$ included here is achieved using an index-tree optimized for this purpose; if we use a complete tree instead, induced paths of $G(\ct)$ have order $O(\ell^{1.6})$.
    \label{fig:summary}
\end{figure}

How do we show that $G$ does not have long induced paths? This is the crux of the proof and it relies on the properties of the auxiliary structures mentioned above.
The approach is to define a notion of \emph{rank} of a vertex and to show that when we follow an induced path, the vertices it uses are more and more constrained according to the ranks of the vertices the path has visited so far. 
More precisely, the ranks are in bijection with the vertices of the index-tree and we show that, as the path goes along, the ranks of the vertices it visits are further nested in the index-tree, thanks to the barriers.
This results into a bound on the orders of induced paths which roughly equals the maximum size of a barrier. This number depends on the choice of the index-tree.

Finally, the last step of the proof consists in bounding the size of barriers by a function of $\ell$. When the index-tree is a complete binary tree, the barriers have size $O(\ell^{1.6})$.
We show that the index-tree can be chosen so that barriers have size $O(\ell \log \ell)$. This value is in fact the result of a trade-off between the maximum size of a left subtree, and the maximum depth of the tree.
This proves the desired upper bound on the order of induced paths in $G$.

\paragraph{Organization of the paper.}
The rest of the paper is organized as follows.
In Section~\ref{sec:prelim} we introduce the standard notations that we use in this paper.
In Section~\ref{sec:construction} we detail the construction of our graph following the different steps highlighted in Figure~\ref{fig:summary}.
At the end of this section is argued that the obtained graph contains a Hamiltonian path, and that it is $2$-degenerate.
In Section~\ref{sec:path-properties} we characterize the shape of particular induced paths in this graph, and show that their order is function of the barrier's size.
In Section~\ref{sec:barrier-size} we upper bound the barrier's size, and prove Theorem~\ref{thm:main-theorem} in Section~\ref{sec:proof}.

\section{Preliminaries}\label{sec:prelim}

\paragraph{Words.} A \new{word} over an alphabet $\Sigma$ is an ordered sequence of elements of $\Sigma$, that are called \emph{letters}\mgnt{letter}.
A \new{factor} of a word $W$ is a contiguous subsequence of $W$.
An \emph{internal}\mgnt{internal, $\circ$} letter of a word is a letter distinct from its extremities.
We use $\circ$ for word concatenation. 
If $W$ is a word over $\Sigma$, the \new{collapse} of $W$ is the word obtained by contraction of consecutive repeated elements of $W$, i.e., each time we have two consecutive symbols in $W$, we remove one.

\paragraph{Graphs and trees.}
We assume that the reader is familiar with standard graph theory terminology, and only recall the different types of trees we consider in this paper, which are defined as follows.
\begin{itemize}
    \item A \new{rooted tree} is a tree with a distinguished node called its \new{root}.
    The \new{leaves} are the nodes with degree one and different from the root. The other vertices (including the root) are called \new{internal nodes}. For every pair $s, t$ of nodes of a rooted tree we write $s \preceq t$ if $s$ lies on the unique path connecting $t$ to the root, and $s \prec t$ if in addition $s\neq t$.
    We say that $t$ is a \new{descendant} of $s$ and that $s$ is an \new{ancestor} of $t$ whenever $s \preceq t$; $t$ is a \new{child} of $s$ and $s$ is the \new{parent} of $t$ if in addition $t$ and $s$ are neighbors.

    \item A \new{binary tree} is a rooted tree in which every internal node has exactly two children, one being referred to as the \emph{left child}\mgnt{left/right child} of the node, and the other as its \emph{right child}.
    
    \item A \new{complete binary tree} is a binary tree where the distances from the root to each leaf are identical.
    
    \item A \new{partial binary tree} is a subtree of a binary tree, i.e., its internal nodes may not have a left or right child.
\end{itemize}

The \new{depth} of a node $s$ in a rooted tree is the order of the unique path from $s$ to the root, denoted $\depth(s)$; note in particular that the root has depth $1$.
The \emph{depth} of a rooted tree $T$, denoted $\depth(T)$, is the maximum order of a path from the root to the leaf. 

\section{The construction}\label{sec:construction}

We describe the construction of a family of graphs satisfying Theorem~\ref{thm:main-theorem}.
Namely, for each integer $\ell$, we show how to construct a $2$-degenerate graph $G$ with $\smash{\Omega(2^{2^\ell})}$ vertices, containing a Hamiltonian path, and no induced path of order more than
$c\cdot \ell \log \ell$
% $c\cdot \log \log n\cdot \log \log \log n$
for some integer $c$.

The construction of $G$ consists of different steps which are split into several
sections, each involving an auxiliary structure (see Figure~\ref{fig:summary}).
First, an underlying tree structure called \emph{skeleton-tree} is described in Section~\ref{sec:skeleton-tree}.
This tree is extended into a \emph{ribbed-tree} by integrating a number of ``barriers'' aimed at blocking induced paths in Section~\ref{sec:ribbed-tree}.
This later notion of barrier involves trees and intricate sequences of integers that are described before, in Section~\ref{sec:index-tree}.
Finally, the obtained graph is modified to make it traceable and $2$-degenerate in Section~\ref{sec:blow-up}.

\subsection{Skeleton-trees}\label{sec:skeleton-tree}

\newcommand{\skn}{\ell} % variable for the order of the skeleton-tree

We inductively define, for every positive integer $\skn$, a tree $\st_\skn$ that is a base object of our construction and that we call \new{skeleton-tree}. In the remaining of the paper, and to avoid confusions with the other structures we will consider, we will exclusively refer to the elements of $V(\st_\ell)$ as \emph{nodes}. Note that $\st_\skn$ is nothing more than a complete binary tree. 
However the definition below will allow us to define additional terminology: the \new{rank} and the \new{zone} of a node $t \in V(\st_\skn)$, which are respectively an integer in $\intv{1}{\skn}$ and a subgraph of $\st_\skn$ that we denote by $\rank(t)$ and $\zone(t)$.

\begin{itemize}
    \item The tree $\st_1$ is the single-vertex graph, and its root is its unique vertex.
    The rank of this vertex is $1$ and its zone is $\st_1$ itself.
    \item For every $\skn\geq 2$, the tree $\st_\skn$ is obtained as follows.
    \begin{enumerate}
        \item We start from a node $r$, which will be the root of $\st_\skn$, two disjoint copies of $\st_{\skn-1}$ of respective roots $r_1$ and $r_2$, and make $r$ adjacent to $r_1$, its left child, and to $r_2$, its right child.
        \item Then, for \emph{every} leaf $t$ of the obtained tree, we create two additional disjoint copies of $\st_{\skn-1}$ of respective roots $r'_1$ and $r'_2$, and make $t$ adjacent to $r'_1$, its left child, and to $r'_2$, its right child.
    \end{enumerate}    
    This completes the construction of $\st_\skn$.
    The rank of $r$ is $\skn$, its zone is $\st_\skn$, and the ranks and zones of the other nodes are those that were assigned to them in the copy of $\st_{\skn-1}$ they belong to.
\end{itemize}

For a node $t$ of $\st_\skn$, the function \new{$\ancestor_t$} maps any integer $i$ with $\rank(t) < i \leq \skn$ to the unique ancestor $s$ of $t$ such that $\rank(s) = i$ and $t\in \zone(s)$.\footnote{Note that in general $t$ might have several ancestors of rank $i$, but only one whose zone contains~$t$.}
Moreover, we define \mgnt{$\zone_t$}$\zone_t(i):=\zone(\ancestor_t(i))$ if $i\leq \ell$.
See Figure~\ref{fig:skeleton-tree} for an illustration of this terminology.

\begin{figure}[h!]
    \centering
    \includegraphics[scale=1]{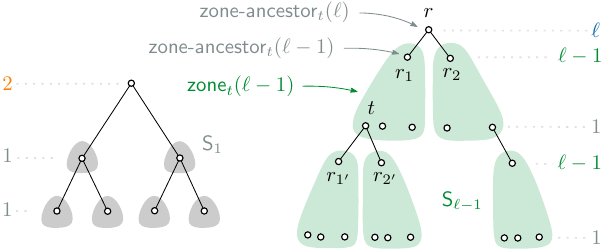}
    \caption{Inductive construction of a skeleton tree ($\st_2$ on the left, $\st_\skn$ on the right) and related terminology. Dotted lines denote the ranks of the nodes lying at the same depth in the tree. Zones are depicted in shaded areas.}
    \label{fig:skeleton-tree}
\end{figure}

Remark that if $s=\ancestor_t(i)$ then $s$ is of rank $i$ and $\ancestor_{s}$ coincides with $\ancestor_{t}$ for all values greater than $i$. 
% We define the \new{zone} of $s$ as $\zone(s)=\{t : \ancestor_t(\rank(s))=s\}$.

The following simple observation will be important in the proof.   
\begin{remark}\label{rem:diffij}
Adjacent nodes in $\st_\skn$ are either of consecutive rank, or one of the two has rank $1$. 
\end{remark}

\begin{remark}\label{rem:skeleton-tree-size}
By construction we have:
\[
\left \{
\begin{array}{l}
     \depth(\st_{1})=1\\
     \depth(\st_{\skn})=2\cdot\depth(\st_{\skn-1})+1,\ \text{for any}\ \skn \geq 2. 
\end{array}
\right.
\]
A trivial induction shows $\depth(\st_{\skn})=2^{\skn}-1$, hence
\begin{equation}
    |V(\st_{\skn})|=2^{2^\skn -1}-1.
\end{equation}
\end{remark}

\subsection{Index-trees and index-barriers}\label{sec:index-tree}

In this section we define particular sequences of integers (called index-barriers) that will be used to define a notion of ``barrier'' for induced paths.
We also prove a number of properties related to index-barriers that will be crucial in our proof.

For an integer $\ell$, an \new{$\ell$-index-tree} is a partial binary (possibly but non-necessarily complete) tree $\ct$ whose vertices are the integers $1,\dots,\ell$, called \emph{indices}\footnote{In order to avoid any confusion between index-trees and the skeleton-trees of the previous section, we will exclusively refer to the vertices of $\ct$ as \emph{indices}.}\mgnt{index} arranged as the reverse of the left-child-depth-first search\footnote{That is, Depth First Search where we consider the left child of a node before its rigth child.} initiated at its root.
See Figure~\ref{fig:index-tree} for an illustration.
Then, its root is $\ell$ and the indices that appear in the left subtree of an index $i$ are all larger to the indices that appear in its right subtree, for any $i \in \intv{1}{\ell}$. 

Let us fix an $\ell$-index-tree $\ct$.
Given an index $i$ of $\ct$ we denote by $i^-$ its left child, and by $i^+$ its right child;\mgnt{$i^-$, $i^+$} the subtree rooted at $i$ is denoted by $\ct(i)$,\mgnt{$\ct(i)$} and by $\ct(i^-)$ and $\ct(i^+)$ are thus meant the subtrees rooted at $i^-$ and $i^+$, respectively.
If the left or right child does not exist these notations refer to the empty set and the empty graph.

\begin{figure}[h!]
    \centering
    \includegraphics[scale=1]{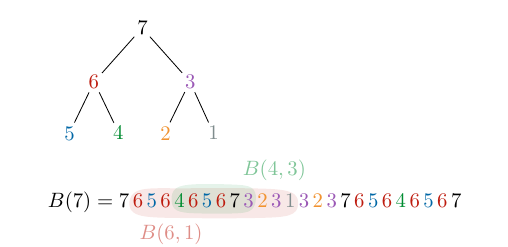}
    \caption{A 7-index-tree $\ct$, its associated full index-barrier $B(7)$, and two index-barriers $B(6,1)$ and $B(4,3)$. For better readability, to each index is associated a distinct color. Recall that index-trees are not necessarily complete binary trees.}
    \label{fig:index-tree}
\end{figure}

The following notion will be central in our construction.

\begin{definition}\label{def:full-barrier}
For any index $i\in \intv{1}{\ell}$, the \new{full index-barrier} of $i$, denoted $B(i)$, is the word over the alphabet $\{1, \dots, i\}$ defined by:\mgnt{$B(i)$}
\begin{itemize}
    \item $B(i)= i$ if $i$ is a leaf of $\ct$;
    \item $B(i)= i \circ B(i-1)\circ i$ if $i$ has only one child; and
    \item $B(i)= i\circ B(i^-)\circ i\circ B(i^+)\circ i\circ B(i^-)\circ i$ otherwise.
\end{itemize}
\end{definition}

Alternatively, note that $B(i)$ can be seen as a walk on the index-tree $\ct$ that is recursively defined similarly to a DFS except that we visit twice the left subtree. That is, to explore the subtree of $\ct$ rooted at index $i$, we visit $i$, then explore the left subtree, then visit $i$, then explore the right subtree, then visit $i$, then explore the left subtree again, then visit $i$. 

This alternative but equivalent point of view will be of interest later, in particular to make the following basic observations as well as to prove a number of properties on factors of $B(\ell)$.

\begin{observation}\label{obs:descendant-smaller}
    For any $i$ with two children, the integers in $B(i^+)$ are smaller than those in $B(i^-)$, which in turn are smaller than $i$.
\end{observation}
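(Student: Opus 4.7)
The plan is to establish the stronger auxiliary claim that for every index $j \in \intv{1}{\ell}$, the set of integers appearing in the word $B(j)$ is exactly $V(\ct(j))$. Once this is in hand, the observation follows immediately from the labeling property of $\ct$ recalled in Section~\ref{sec:index-tree}.

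I would prove this auxiliary claim by structural induction on $\ct(j)$. For the base case, if $j$ is a leaf then by Definition~\ref{def:full-barrier} we have $B(j) = j$ and $V(\ct(j)) = \{j\}$. For the inductive step, if $j$ has a unique child then that child must be $j-1$: a left-child DFS visits the unique child of a node immediately after the node itself, and so the reverse ordering assigns to this child the label that is one less than its parent's. Thus by the recursive definition $B(j) = j \circ B(j-1) \circ j$ and the induction hypothesis, the integers appearing in $B(j)$ are exactly $\{j\} \cup V(\ct(j-1)) = V(\ct(j))$. The case where $j$ has two children $j^-$ and $j^+$ is analogous: the definition $B(j) = j \circ B(j^-) \circ j \circ B(j^+) \circ j \circ B(j^-) \circ j$ together with the induction hypothesis applied to $j^-$ and $j^+$ yields that $B(j)$ uses precisely the indices $\{j\} \cup V(\ct(j^-)) \cup V(\ct(j^+)) = V(\ct(j))$.

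To conclude the observation, I apply the claim to $i^-$ and $i^+$: the integers of $B(i^-)$ are exactly those of $V(\ct(i^-))$ and the integers of $B(i^+)$ are exactly those of $V(\ct(i^+))$. The numbering convention of Section~\ref{sec:index-tree} directly yields that every index of the left subtree $\ct(i^-)$ exceeds every index of the right subtree $\ct(i^+)$, and that $i$, being the root of $\ct(i)$, is the largest index therein, so all elements of $V(\ct(i^-)) \cup V(\ct(i^+))$ are strictly smaller than $i$. Combining these two facts yields the observation. No serious obstacle is expected: the argument amounts to unwinding the recursive definition of $B$ and invoking the DFS-based labeling property of $\ct$ already supplied in Section~\ref{sec:index-tree}; the only minor subtlety is the identification of the unique child of a node with the index immediately preceding it, which is a direct consequence of how the reverse of the left-child DFS assigns consecutive labels.
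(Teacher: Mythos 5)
Your proof is correct and matches the paper's (implicit) justification: the paper states this as an immediate consequence of viewing $B(i)$ as a walk on $\ct(i)$ — so that the letters of $B(j)$ are exactly $V(\ct(j))$ — combined with the reverse-DFS labeling property that makes the root the largest index of its subtree and every left-subtree index larger than every right-subtree index. Your write-up simply spells out, by induction on the recursive definition of $B$, what the paper leaves as a one-line observation.
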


\begin{observation}\label{obs:factors-contain-sp}
    If two indices $i,j \in \intv{1}{\ell}$ appear in some factor $W$ of $B(\ell)$, then every index in the shortest $i$--$j$ path in $\ct$ appears in $W$ as well.
\end{observation}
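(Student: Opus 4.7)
The plan is to interpret $B(\ell)$ as the trace of a walk on the index-tree $\ct$, and then to invoke the elementary tree-theoretic fact that any walk between two vertices of a tree passes through every vertex on the unique path connecting them.

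First, I would establish by induction on $i$ the following two properties of the word $B(i)$: (a)~its first and last letters are both equal to $i$, and (b)~any two consecutive letters of $B(i)$ are adjacent nodes of $\ct$. The base case is immediate since $B(i) = i$ whenever $i$ is a leaf of $\ct$. For the inductive step, one examines the recursive decompositions of $B(i)$ given in Definition~\ref{def:full-barrier}. Consecutive pairs of letters internal to any sub-block of the form $B(i^-)$, $B(i^+)$, or $B(i-1)$ are already handled by the induction hypothesis together with property~(a) applied to the sub-blocks; the only new pairs introduced at the seams of the concatenations are of the form $(i, c)$ or $(c, i)$, where $c$ is a child of $i$ in $\ct$, and these are indeed edges of $\ct$.

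With this walk interpretation in hand, the observation follows cleanly. Let $W$ be a factor of $B(\ell)$ containing both indices $i$ and $j$, fix any occurrence of each in $W$, and consider the subfactor $W'$ of $W$ that they delimit. By the previous step, $W'$ is the sequence of nodes visited by a walk in $\ct$ from $i$ to $j$. For any index $v$ lying on the unique $i$--$j$ path of $\ct$: if $v \in \{i, j\}$ then $v$ trivially appears in $W'$; otherwise, $v$ separates $i$ from $j$ in the tree $\ct$, so any walk in $\ct$ from $i$ to $j$ must visit $v$. In both cases, $v$ appears in $W'$ and hence in $W$, as required.

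I do not foresee a serious obstacle. The only mild care is in the inductive step, where one must check at each concatenation that the consecutive letters created at the seams are genuinely adjacent in $\ct$; this reduces to noting that $i^-$, $i^+$, and $i-1$ are children of $i$ in $\ct$ and are therefore neighbors of $i$ in $\ct$.
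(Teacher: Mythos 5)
Your proposal is correct and matches the paper's intended argument: the paper states this as an immediate consequence of viewing $B(\ell)$ as a walk on $\ct$ (a DFS that traverses the left subtree twice), and your induction merely makes explicit that consecutive letters of $B(\ell)$ are adjacent in $\ct$, after which the separator property of trees finishes the job exactly as you describe.
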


% ancienne version qui a subit un polish eronné
% \begin{observation}\label{obs:decreasing-indices}
%     Let $i,j,k\in \intv{1}{\ell}$ with $i > j > k$.
%     Then $j$ and $k$ belong to $B(i)$ and the first occurrence of $j$ appears before the first occurrence of $k$.
% \end{observation}

\begin{observation}\label{obs:decreasing-indices}
    Let $k,i,j\in \intv{1}{\ell}$ be such that $k>i>j$ and $i,j\in V(\ct(k))$.
    Then the first occurrence of $i$ appears before the first occurrence of $j$ in $B(k)$. 
\end{observation}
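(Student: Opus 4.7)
The plan is to proceed by structural induction on the subtree $\ct(k)$, using the recursive definition of $B(k)$ together with Observation~\ref{obs:descendant-smaller} and the fact that the index sets supporting $B(k^-)$ and $B(k^+)$ are disjoint. The base case, where $k$ is a leaf of $\ct$, is vacuous since $B(k) = k$ contains no indices strictly smaller than $k$.

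For the inductive step, I would distinguish cases according to the number of children of $k$. In the single-child case, $B(k) = k \circ B(k-1) \circ k$ and every occurrence of $i$ or $j$ in $B(k)$ lies in the middle factor $B(k-1)$. When $i < k-1$, the inductive hypothesis applied to $B(k-1)$ with the triple $(k-1, i, j)$ immediately yields the desired order. When $i = k-1$, the first occurrence of $i$ is exactly the first letter of $B(k-1)$ (position~$2$ of $B(k)$), and this necessarily precedes any occurrence of the strictly smaller $j$ within that block.

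In the two-children case $B(k) = k \circ B(k^-) \circ k \circ B(k^+) \circ k \circ B(k^-) \circ k$, Observation~\ref{obs:descendant-smaller} (indices in $B(k^+)$ are strictly smaller than those in $B(k^-)$) rules out the configuration $i \in B(k^+)$, $j \in B(k^-)$. The symmetric configuration $i \in B(k^-)$, $j \in B(k^+)$ is immediate since the first $B(k^-)$ block entirely precedes the $B(k^+)$ block in $B(k)$. When $i$ and $j$ both belong to the same sub-barrier $B(k^-)$ or $B(k^+)$, Observation~\ref{obs:descendant-smaller} ensures that neither appears in the other sub-barrier, so their first occurrences in $B(k)$ coincide with their first occurrences inside that block, and the inductive hypothesis applied to the appropriate sub-barrier concludes the argument.

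The only delicate point is the boundary case where $i$ coincides with the root of the subtree to which the inductive hypothesis would be applied (i.e.\ $i \in \{k-1, k^-, k^+\}$): there the strict inequality required to invoke the inductive hypothesis with $i$ as the middle term fails. This is handled uniformly by the observation that, in the recursive definition of $B$, the root of each sub-barrier appears as its very first letter, so any strictly smaller index in that block necessarily occurs later. Beyond this small piece of bookkeeping, the argument is a routine unfolding of the recursion, so I do not expect any genuine technical obstacle.
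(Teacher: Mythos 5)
Your induction is correct and complete: the case analysis on the number of children of $k$, the use of Observation~\ref{obs:descendant-smaller} to rule out $i\in B(k^+)$, $j\in B(k^-)$, the disjointness of the letter sets of $B(k^-)$ and $B(k^+)$ to localize first occurrences, and the boundary case where $i$ is the root of its sub-barrier are exactly the points that need checking. The paper states this as an observation without proof, justified by viewing $B(k)$ as a walk whose first-visit order agrees with the DFS order defining the labels; your argument is the routine formalization of that same idea, so there is nothing further to add.
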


\def\ancestorvariable{k}

\begin{proposition}\label{prop:smaller-index-incomparable-right-child}
    Let $i,b\in \{1,\dots,\ell\}$.
    If $i<b$ then either $i$ lies in $\ct(b^-)$, or it lies in $\ct(\ancestorvariable^+)$ for some ancestor $\ancestorvariable$ of $b$ ($\ancestorvariable=b$). 
\end{proposition}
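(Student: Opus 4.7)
The plan is to locate $i$ in $\ct$ relative to $b$ via their lowest common ancestor. Let $k$ be the lowest common ancestor of $i$ and $b$ in $\ct$; this is well defined because the root of $\ct$ is a common ancestor of every pair of indices. I would split the analysis into two cases: either $k = b$, or $k$ is a strict ancestor of $b$. Throughout, I would use the consequence of Observation~\ref{obs:descendant-smaller}, obtained by iterating along an ancestor chain, that the label of any ancestor of a node is at least the label of that node. Applied to the pair $(k,b)$, this yields $k \geq b > i$, so $i \neq k$ and $i$ is in particular a strict descendant of $k$.

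In the first case $k = b$, the index $i$ is a strict descendant of $b$ and therefore lies in $\ct(b^-) \cup \ct(b^+)$. If $i \in \ct(b^-)$, the first conclusion of the statement holds. Otherwise $i \in \ct(b^+) = \ct(k^+)$ with $k = b$, which is the second conclusion (with the ancestor of $b$ taken to be $b$ itself).

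In the second case, $k$ is a strict ancestor of $b$, and the LCA property forces $i$ and $b$ to lie in distinct subtrees of $k$; in particular $k$ has two children. By Observation~\ref{obs:descendant-smaller}, every index of $\ct(k^-)$ exceeds every index of $\ct(k^+)$, and since $i < b$ this forces $b \in \ct(k^-)$ and $i \in \ct(k^+)$, giving the second conclusion with a strict ancestor $k$ of $b$. I do not anticipate a real obstacle: the proof simply combines a standard LCA case-split with Observation~\ref{obs:descendant-smaller}, and the only mildly subtle point is recognizing that the boundary case $k = b$ is indeed permitted by the statement.
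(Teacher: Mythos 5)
Your proof is correct and takes essentially the same approach as the paper: both arguments reduce to Observation~\ref{obs:descendant-smaller} applied along the root-to-$b$ path, with your lowest-common-ancestor case split being a mild repackaging of the paper's observation that any index smaller than $b$ must hang off that path in $\ct(b^-)$ or in some $\ct(\ancestorvariable^+)$.
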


\begin{proof}
    Consider the (unique) path $Q$ of $\ct$ from the root $\ell$ to $b$.
    By Observation~\ref{obs:descendant-smaller} each index in $Q$ is larger than $b$.
    Moreover, if a index $\ancestorvariable$ in $Q$ has its left child $\ancestorvariable^-$ outside the path, then $b \in V(\ct(\ancestorvariable^+))$ and by the same observation every index in $\ct(\ancestorvariable^-)$ is larger than $b$.
    
    Thus, the indices of $\ct$ that are smaller than $b$ either lie in $\ct(b^-)$, or in $\ct(\ancestorvariable^+)$ for some index $\ancestorvariable$ of $Q$, with possibly $\ancestorvariable=b$.
\end{proof}

\begin{proposition}\label{prop:from-parent-of-root-to-i-consecutive}
    Let $i,j\in \{1,\dots,\ell\}$ be such that $j\in \ct(i)$ (in particular $j\leq i$), and such that $i$ has a parent $i'$ in $\ct$.
    Then any factor of $B(\ell)$ from an occurrence of $i'$ to an occurrence of $j$ contains all indices from $i$ to $j$. 
\end{proposition}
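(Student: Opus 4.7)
The plan is to use the walk interpretation of $B(\ell)$ on $\ct$ (mentioned right after Definition~\ref{def:full-barrier}) to localise the part of the factor that matters inside a single copy of $B(i)$, and then prove the required coverage by induction on the subtree $\ct(i)$.

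More precisely: a routine induction on the recursive definition shows that two consecutive letters of $B(\ell)$ always form an edge of $\ct$, since the concatenation points introduced in each expansion of $B(k)$ are of the form $k\,k^-$ or $k\,k^+$. Hence $B(\ell)$ is a walk on $\ct$, and any factor of it is a sub-walk. Let $W$ be a factor of $B(\ell)$ between an occurrence of $i'$ at position $p$ and an occurrence of $j$ at position $q$; assume $p<q$ by symmetry. Since $j \in \ct(i)$ while $i'$ is the only neighbour of $i$ outside $\ct(i)$, the walk must cross the edge $i' i$ at least once between $p$ and $q$. Let $p^\star$ be the largest position in $(p,q]$ with $B(\ell)[p^\star-1] = i'$ and $B(\ell)[p^\star] = i$. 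Between $p^\star$ and $q$ the walk never leaves $\ct(i)$, for otherwise it would have to traverse $i' i$ again before $q$, contradicting the maximality of $p^\star$. Consequently, $W[p^\star, q]$ is contained in a single copy of $B(i)$ appearing in $B(\ell)$ and is, in fact, a prefix of that copy ending at an occurrence of $j$. It thus suffices to prove the following sub-claim (the case $p>q$ being symmetric, either by the analogous suffix argument or by noting that each $B(k)$ is a palindrome, which is itself an easy induction):

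\emph{Sub-claim.} Any prefix of $B(k)$ ending at an occurrence of $j \in \ct(k)$ contains every integer in $[j, k]$.

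I would establish the sub-claim by induction on $|\ct(k)|$, the base case $\ct(k) = \{k\}$ being immediate. The key fact is that, by the reverse-DFS labelling of $\ct$, the indices of $\ct(k^-)$ form the consecutive range $[k^+ + 1, k-1]$ with $k^- = k-1$, and that a full copy of $B(k^-)$ visits every one of them. In $B(k) = k \circ B(k^-) \circ k \circ B(k^+) \circ k \circ B(k^-) \circ k$, the three non-trivial sub-cases (the case $j=k$ being trivial) are:
\begin{itemize}
    \item if $j$ lies in the first $B(k^-)$, the induction hypothesis applied there covers $[j, k-1]$, and the initial $k$ supplies the remaining value;
    \item if $j$ lies in $B(k^+)$, the induction hypothesis covers $[j, k^+]$, the preceding full $B(k^-)$ covers $[k^+ + 1, k-1]$, and the initial $k$ completes the range;
    \item if $j$ lies in the second $B(k^-)$, the preceding full first $B(k^-)$ already covers $\ct(k^-) \supseteq [j, k-1]$, together with the initial $k$.
\end{itemize}
The single-child case $B(k) = k \circ B(k-1) \circ k$ is analogous, using only the first item with the middle subtree absent.

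The main obstacle is that Observations~\ref{obs:descendant-smaller}--\ref{obs:decreasing-indices} on their own only recover the indices lying on the $i$-to-$j$ path in $\ct$, which in general do not exhaust $[j, i]$. The doubling of $B(i^-)$ in the definition of $B(i)$ is precisely the feature that forces a complete sweep of the left subtree to occur in every prefix of the relevant form, whichever of the three sub-cases arises, closing the gap.
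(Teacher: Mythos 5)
Your proof is correct and follows essentially the same route as the paper's: both arguments localise the factor to a prefix of a single copy of $B(i)$ preceded by $i'$, and then show that every index of $\{j,\dots,i\}$ must occur before the terminal occurrence of $j$. The only difference is presentational — where the paper invokes the unproved Observation~\ref{obs:decreasing-indices} and a terse ``no internal occurrence of $i'$ or $j$'' reduction, you supply the walk/edge-crossing argument and a self-contained induction on $B(k)$, which is a more detailed rendering of the same idea.
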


\begin{proof}
    Consider a factor $W$ of $B(\ell)$ from an occurrence of $i'$ to an occurrence of~$j$.   
    Without loss of generality we may assume that none of $i'$ and $j$ appears as an internal letter of $W$ (otherwise we may consider the shorter factor using that letter).
    Then by definition of a full index-barrier, $W$ is a prefix of $i' \circ B(i) \circ i'$. Applying Observation~\ref{obs:decreasing-indices}
    with $i'$ instead of $k$, and every $j' \in \intv{i}{j}$ instead of $i$, we obtain that all integers between $i$ and $j+1$ appear before the first occurrence of~$j$, hence in $W$, as claimed.
\end{proof}

In the following, we will be interested in some factors of $B(\ell)$ that will be used to define ``barriers'' for induced paths in our final construction.
These particular factors are defined as follows.

\begin{definition}\label{def:ij-barrier}
    For\mgnt{index-barrier} % to have mgnt aligned
    any $i,j \in \intv{1}{\ell}$ with $i>j$, the \emph{index-barrier} from $i$ to $j$, denoted $B(i,j)$,\mgnt{$B(i,j)$} is the factor of $B(\ell)$ starting with the first occurrence of $i$ and ending at the first occurrence of $j$.
\end{definition}

Note that the index-barrier $B(i,j)$ is defined for $i>j$ only.
It contains one occurrence of $j$ and possibly several occurrences of $i$.
Moreover, by Observation~\ref{obs:decreasing-indices}, the occurrence of $j$ in $B(i,j)$ is in fact the first occurrence of $j$ in $B(\ell)$.
In the remaining of the paper, two particular types of index-barriers will be considered, namely, those of the form $B(i-1,i)$ and $B(i,1)$ for $i \geq 2$.

We derive the following observations from the fact that index-barriers are factors of the full index-barrier $B(\ell)$, using Observation~\ref{obs:decreasing-indices} with $\ell$ in place of $k$.

\begin{observation}\label{obs:concatenation-of-barriers}
    The collapse of $B(i_1,i_2)\circ B(i_2,i_3)\circ \cdots \circ B(i_{p-1},i_p)$ where $\ell\geq i_1> i_2> \cdots> i_p\geq 1$ is a factor of $B(\ell)$.
\end{observation}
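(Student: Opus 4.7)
The plan is to show that this concatenation is, up to the collapse, literally a contiguous factor of $B(\ell)$, namely the index-barrier $B(i_1, i_p)$. First I would locate the first occurrences of $i_1, \ldots, i_p$ in $B(\ell)$. Applying Observation~\ref{obs:decreasing-indices} with $k = \ell$ (the case $i_1 = \ell$ being immediate since $B(\ell)$ begins with $\ell$), these occurrences appear from left to right in the order matching $i_1 > i_2 > \cdots > i_p$, at some positions $p_1 < p_2 < \cdots < p_p$. By Definition~\ref{def:ij-barrier}, each $B(i_k, i_{k+1})$ is then precisely the factor of $B(\ell)$ spanning positions $p_k$ through $p_{k+1}$: it begins with the letter $i_k$ and ends with $i_{k+1}$.

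Next I would observe that the concatenation $B(i_1, i_2) \circ \cdots \circ B(i_{p-1}, i_p)$ coincides with the factor of $B(\ell)$ running from $p_1$ to $p_p$, except that the letter at each of the positions $p_2, \ldots, p_{p-1}$ is duplicated, since the letter $i_k$ occurring there is both the last symbol of $B(i_{k-1}, i_k)$ and the first symbol of $B(i_k, i_{k+1})$. The one step requiring a genuine check is that these $p-2$ pairs are the \emph{only} consecutive repetitions in the concatenated word. I would establish this through a short induction on the recursive definition of $B(\cdot)$: since $B(j)$ always begins and ends with $j$ and in each recursive clause the index $i$ differs from the root $i'$ of every sub-barrier $B(i')$ adjacent to it, the word $B(\ell)$ never contains two identical consecutive letters, so neither do any of its factors $B(i_k, i_{k+1})$.

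The collapse operation therefore removes exactly the $p-2$ junction duplicates and nothing else, leaving the factor of $B(\ell)$ from $p_1$ to $p_p$, which by Definition~\ref{def:ij-barrier} is $B(i_1, i_p)$, a factor of $B(\ell)$ as required. I expect the no-consecutive-equal-letters property to be the only substantive step; all the rest is a direct unpacking of definitions together with a single application of Observation~\ref{obs:decreasing-indices}.
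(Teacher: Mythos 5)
Your argument is correct and follows exactly the route the paper intends (it states this as an observation derived from the fact that index-barriers are factors of $B(\ell)$ together with Observation~\ref{obs:decreasing-indices} applied with $\ell$ in place of $k$): the first occurrences of $i_1,\dots,i_p$ appear in this order, so consecutive barriers overlap in exactly one letter and the collapse yields $B(i_1,i_p)$. Your extra check that $B(\ell)$ itself has no two equal consecutive letters (so the collapse removes only the $p-2$ junction duplicates) is a detail the paper leaves implicit, and it is verified correctly by your induction on Definition~\ref{def:full-barrier}.
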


From the definition and Proposition~\ref{prop:from-parent-of-root-to-i-consecutive} we also have the following.

\begin{observation}\label{obs:i-j-intermediate}
    For any $i,j \in \intv{1}{\ell}$ with $i>j$, $B(i,j)$ contains all indices from $i$ to $j$, no internal occurrence of $j$, and it does not contain indices smaller than $j$.
    In particular, any prefix of $B(i,1)$ ending at the first occurrence of $j$ satisfies these properties.
\end{observation}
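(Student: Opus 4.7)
The plan is to read off each of the three assertions about $B(i,j)$ directly from its definition (the factor of $B(\ell)$ from the first occurrence of $i$ to the first occurrence of $j$), combined with Observation~\ref{obs:decreasing-indices} invoked with $k$ specialized to $\ell$. This specialization is legitimate because every index $1,\dots,\ell$ is visited by the recursive walk defining $B(\ell)$ and hence appears in $B(\ell)$, so the hypothesis that the two smaller indices belong to $B(k)$ is automatic. The single degenerate case is $i=\ell$, in which the observation does not formally apply but the relevant conclusion is trivial because $\ell$ is the opening letter of $B(\ell)$ and so its first occurrence precedes that of every other index.

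For the three properties of $B(i,j)$ I will proceed as follows. The absence of an \emph{internal} occurrence of $j$ is immediate: the terminal letter of $B(i,j)$ is, by construction, the first occurrence of $j$ in $B(\ell)$, so no earlier position of $B(i,j)$ can carry another $j$. For the absence of indices smaller than $j$, I will apply Observation~\ref{obs:decreasing-indices} to the triple $(\ell, j, k)$ for each $k<j$: the first occurrence of $j$ precedes that of $k$ in $B(\ell)$, while $B(i,j)$ stops at the former, so $k$ does not appear at all in $B(i,j)$. For the inclusion of all indices in $\intv{j}{i}$, the endpoints $i$ and $j$ are present at the extremities of $B(i,j)$ by construction; and for any intermediate $k$ with $j<k<i$, two applications of Observation~\ref{obs:decreasing-indices}, to $(\ell, i, k)$ and to $(\ell, k, j)$, together locate the first occurrence of $k$ strictly between the first occurrences of $i$ and $j$, hence inside $B(i,j)$.

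The \emph{in particular} clause will follow by identifying the prefix under consideration with $B(i,j)$ itself. Both begin at the first occurrence of $i$ in $B(\ell)$. When $j>1$, Observation~\ref{obs:decreasing-indices} applied to $(\ell, j, 1)$ places the first occurrence of $j$ in $B(\ell)$ before that of $1$ and hence inside $B(i,1)$, so the first occurrence of $j$ inside $B(i,1)$ coincides with its first occurrence in $B(\ell)$; the case $j=1$ is trivial since then the prefix is $B(i,1)$ itself. In either case the prefix ends exactly where $B(i,j)$ ends, and so inherits the three properties just established. There is no real obstacle here: the whole statement is an exercise in translating \emph{first occurrences inside a factor} into \emph{first occurrences in the full word} $B(\ell)$, and the only point that deserves any care is the edge case $i=\ell$ discussed above.
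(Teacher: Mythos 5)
Your proof is correct and follows exactly the route the paper intends: the paper states this as an observation derived from Observation~\ref{obs:decreasing-indices} with $\ell$ in place of $k$, which is precisely what you carry out, with the added (welcome) care for the edge case $i=\ell$ and for identifying the prefix of $B(i,1)$ with $B(i,j)$.
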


We include a proof for the next observation, which relies on the position of consecutive indices in the index-tree.

\begin{observation}\label{obs:i-iminusone-intermediate}
    For any $i \in \intv{2}{\ell}$, $B(i,i-1)$ only contains indices greater or equal to $i-1$, and it does not contain any internal occurrence of $i$ nor $i-1$.
\end{observation}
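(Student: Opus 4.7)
The plan is to reduce to a single remaining claim via Observation~\ref{obs:i-j-intermediate} and then to exploit a well-chosen ancestor in $\ct$ to cleanly separate the relevant occurrences. Applying Observation~\ref{obs:i-j-intermediate} with $j = i - 1$ immediately yields that $B(i, i-1)$ contains no index strictly smaller than $i - 1$ and no internal occurrence of $i - 1$; it thus suffices to show that $i$ does not occur as an internal letter.

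For this I will rely on the structural fact that the reverse left-first DFS labeling of $\ct$ forces the indices in every subtree $\ct(k)$ to form a contiguous range $\{k - |\ct(k)| + 1, \ldots, k\}$. In particular $i - 1 \in \ct(i)$ precisely when $i$ is not a leaf, and in that case $i - 1$ is the maximum index of one of the child-subtrees of $i$, hence a child of $i$. If $i$ is not a leaf, Definition~\ref{def:full-barrier} directly gives $B(i) = i \circ B(i-1) \circ \cdots$, so the first $i - 1$ in $B(\ell)$ is the second letter of $B(i)$, and therefore $B(i, i-1) = i \circ (i-1)$ has no internal letters at all.

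It remains to treat the case where $i$ is a leaf. Contiguity of subtree labels ensures the existence of a strict ancestor $w$ of $i$ --- the lowest common ancestor of $i$ and $i - 1$ in $\ct$ --- such that $w$ has two children, $i \in \ct(w^-)$, and $w^+ = i - 1$. The first $B(w)$-call in $B(\ell)$ has the form $w \circ B(w^-) \circ w \circ B(w^+) \circ w \circ B(w^-) \circ w$ and is the unique region of $B(\ell)$ containing the first occurrence of every index of $\ct(w)$. Hence the first $i$ lies in the first $B(w^-)$-factor, the first $i - 1$ is the first letter of $B(w^+)$, and any subsequent occurrence of $i$ must appear in the second $B(w^-)$-factor or in a later $B(w)$-call --- both of which come strictly after the first $i - 1$. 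Thus no $i$ occurs internally in $B(i, i-1)$.

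The main obstacle I anticipate is the leaf case, specifically establishing from contiguity the existence of $w$ with the right properties (in particular the identity $w^+ = i - 1$), and then carefully locating the first and subsequent occurrences of $i$ and $i - 1$ within the single expansion of the first $B(w)$-call.
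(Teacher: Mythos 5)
Your reduction via Observation~\ref{obs:i-j-intermediate} and your treatment of the non-leaf case match the paper's proof exactly, and your identification of the lowest common ancestor $w$ with $w^+ = i-1$ and $i \in \ct(w^-)$ is also the paper's route. But in the leaf case there is a genuine gap at the decisive step. You write that the first occurrence of $i$ lies in the first $B(w^-)$-factor of the expansion $w \circ B(w^-) \circ w \circ B(w^+) \circ w \circ B(w^-) \circ w$ and that ``any subsequent occurrence of $i$ must appear in the second $B(w^-)$-factor or in a later $B(w)$-call.'' This is precisely what needs proof: a priori the \emph{first} $B(w^-)$-factor could itself contain several occurrences of $i$ (a full index-barrier generally repeats its letters many times --- every index sitting in a left subtree somewhere inside $\ct(w^-)$ occurs at least twice in $B(w^-)$), and a second occurrence of $i$ inside that first factor would be an internal letter of $B(i,i-1)$, sitting strictly before the first $i-1$.

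What saves the argument, and what the paper proves, is that $i$ is the \emph{smallest} index of $\ct(w^-)$ (by your own contiguity observation, $\min \ct(w^-) = w^+ + 1 = i$), together with the auxiliary claim that the smallest index of any subtree $\ct(j)$ occurs exactly once in $B(j)$. That auxiliary claim needs its own short induction: the smallest index of $\ct(j)$ lives in $\ct(j^+)$ (or in $\ct(j-1)$ if $j$ has one child), and the factor $B(j^+)$ appears only once in the expansion of $B(j)$, whereas $B(j^-)$ appears twice --- which is exactly why the claim fails for non-minimal indices. You anticipated that ``carefully locating the first and subsequent occurrences of $i$'' was the obstacle, but the proposal as written asserts the conclusion rather than supplying this uniqueness argument, so the leaf case is not yet closed.
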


\begin{proof}
    The fact that $B(i,i-1)$ does not contain internal occurrences of $i-1$ follows from the definition, and the fact that it only contains indices greater than $i-1$ is implied by Observation~\ref{obs:i-j-intermediate}.
    Let us show that no internal occurrence of $i$ appears in $B(i,i-1)$.
    
    We consider two cases depending on whether $i$ is a leaf of $\ct$ or not.

    Suppose that $i$ is not a leaf. 
    All occurrences of $i$ appears in a factor $B(i)$ by definition. 
    The first occurrence of $i$ is therefore in $B(i)$. 
    If $i$ has two children, $i-1$ is its left child, and $B(i)=i\circ B(i-1)\circ i\circ B(i^+)\circ i\circ B(i-1)\circ i$ and $B(i-1)$ start with $i-1$. Therefore $B(i,i-1)= i\circ i-1$. Similarly, if $i$ has one child, $B(i)=i\circ B(i-1)\circ i$ and therefore $B(i,i-1)= i\circ i-1$.
    
    Suppose that $i$ is a leaf.
    To deal with this case, we first need to prove the following statement: for every $j \in \intv{1}{\ell}$, if an index $i'$ is the smallest that appears in $B(j)$, then it appears exactly once. 
    This can be shown by the following induction on the depth of $j$.
    The statement is true when $j$ is leaf as in this case $B(j)=j=i'$.
    If $j$ has two children then $i'$ is in $\ct(j^+)$ (because the indices in $\ct(j^+)$ are smaller than those in $\ct(j^-)$). Since all indices of $\ct(j^+)$ appear in $B(j)$, we get that $i'$ is the smallest index of $T(j^+)$. Therefore, by induction, it appears once in $B(j^+)$, and so only once in $B(j)=j\circ B(j^-)\circ j\circ B(j^+)\circ j\circ B(j^-)\circ j$.
    Similarly, if $j$ has one child then by definition $B(j)=j\circ B(j-1)\circ j$ and by applying induction on $j-1$ we get that the smallest index appears only once.
    
    Now let $j$ be the smallest index such that $i$ and $i-1$ belong to $\ct(j)$ (that is, $j$ is the least common ancertor of $i$ and $i-1$).
    By construction of $\ct$, $i-1$ is not an ancestor of $i$ and since $i$ has no descendant, $j$ has two children.
    Since $i$ is a leaf, $i\in \ct(j^-)$ and $i-1$ is the right child of $j$. 
    Since $i$ is the smallest index of $\ct(j^-)$ it appears once in $B(j^-)$.
    So $B(j)=j\circ B(j^-)\circ j\circ B(i-1)\circ j\circ B(j^-)\circ j$. Since $B(i,i-1)$ is a factor of $B(j)$ and since $B(j^-)$ contains only one occurrence of $i$, so does $B(i,i+1)$.
\end{proof}

% \begin{observation}\label{obs:i-iminusone-no-internal-occurrence}
%     For any $i\in \intv{2}{\ell}$, 
% \end{observation}

\begin{proposition}\label{prop:factors-bounded-by-b-are-equal}
    Let $i,b \in \intv{1}{\ell}$ be such that $i<b$.
    Then all the factors of $B(\ell)$ starting with $b$, ending with $b$, containing at least one occurrence of $i$, and no other occurrence of $b$, are equal.
    % Consequently, this factor contains exactly all indices of $\reach_b(i)$.
\end{proposition}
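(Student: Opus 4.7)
The plan is to locate any factor $F = b \circ W \circ b$ satisfying the hypotheses precisely in the recursive structure of $B(\ell)$. Let $k$ be the lowest common ancestor of $b$ and $i$ in $\ct$; Observation~\ref{obs:descendant-smaller} gives $k \geq b$, since every strict ancestor of $b$ in $\ct$ has index strictly larger than $b$. I would first show that $F$ lies inside a single invocation of $B(k)$ in the recursive expansion of $B(\ell)$, and then that its shape within that invocation is forced, which pins $F$ down as a word.

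In the case $k = b$, we have $i \in \ct(b)$, so the occurrences of $i$ in $B(\ell)$ all lie within identically expanded copies of $B(b)$, and in particular $F$ sits inside one such copy. Using Definition~\ref{def:full-barrier}, the $b$-to-$b$ subfactors of $B(b)$ with no internal $b$ are $b \circ B(b^-) \circ b$ and $b \circ B(b^+) \circ b$ if $b$ has two children, or $b \circ B(b-1) \circ b$ if $b$ has one child. Whichever of $\ct(b^-)$, $\ct(b^+)$, or $\ct(b-1)$ contains $i$ selects the gap uniquely, and in the two-child case the two copies of $B(b^-)$ yield the same word.

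In the case $k > b$, the LCA property together with Observation~\ref{obs:descendant-smaller} and $b > i$ forces $b \in \ct(k^-)$ and $i \in \ct(k^+)$, so in particular $k$ has two children. The crucial step is to rule out $F$ straddling two distinct invocations of $B(k)$ in $B(\ell)$. A straightforward induction on the recursive definition of $B$ shows that every occurrence in $B(\ell)$ of an index of $\ct(k)$ lies inside some $B(k)$-block, which applied to $b$ and $i$ pins both of their occurrences inside $B(k)$-blocks. If $F$ started at a $b$ in one $B(k)$-block and ended at a $b$ in a later one, then the suffix of the first block from its last $b$ to its end, any inter-block material, and the prefix of the second block from its start to its first $b$ would all use only indices from $\ct(k^-) \cup \{k\}$ or from ancestor/sibling subtrees of $k$, each of which is disjoint from $\ct(k^+)$; such an $F$ could not contain $i$, a contradiction. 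Hence $F$ lives inside a single $B(k)$-block. Expanding $B(k) = k \circ B(k^-) \circ k \circ B(k^+) \circ k \circ B(k^-) \circ k$, the $b$'s occur only in the two $B(k^-)$-copies and the $i$'s only in the middle $B(k^+)$, so the only way to have $b$-endpoints, no internal $b$, and an internal $i$ is to run from the last $b$ of the first $B(k^-)$, through $k \circ B(k^+) \circ k$, to the first $b$ of the second $B(k^-)$. This determines $F$ uniquely as a word.

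The main obstacle I expect is the straddling argument in the second case: it relies on the clean auxiliary statement that no index of $\ct(k)$ occurs outside a $B(k)$-block in $B(\ell)$, together with a careful check that the boundary material of a $B(k)$-block (both the part inside the block but outside its $B(k^\pm)$-subblocks, and the inter-block material at parent levels) contains no index of $\ct(k^+)$. Once that is established the rest of the proof reduces to directly reading off Definition~\ref{def:full-barrier} and exploiting subtree disjointness via Observation~\ref{obs:descendant-smaller}.
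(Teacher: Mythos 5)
Your proof is correct and follows essentially the same route as the paper: localize the factor inside a single recursive occurrence of $B(k)$ for the relevant ancestor $k$ of $b$ (the paper reaches the same case split via Proposition~\ref{prop:smaller-index-incomparable-right-child} rather than the LCA of $i$ and $b$, and rules out straddling via Observation~\ref{obs:factors-contain-sp} rather than your direct boundary analysis), then read the forced shape off Definition~\ref{def:full-barrier}. These differences are cosmetic; the only point to tighten is that in the case $k=b$ you should run the same straddling argument you give for $k>b$, since the fact that the two endpoint $b$'s of $F$ lie in a single copy of $B(b)$ does not follow merely from the occurrences of $i$ lying in such copies.
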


\begin{proof}
    By Proposition \ref{prop:smaller-index-incomparable-right-child} either $i$ lies in $\ct(b^-)$, or it lies in $\ct(\ancestorvariable^+)$ for some ancestor $\ancestorvariable$ of $b$ with possibly $\ancestorvariable=b$.
    Let $W$ be a factor of $B(\ell)$ satisfying the conditions of the statement.
    We distinguish the two cases.
        
    Suppose that $i\in \ct(b^-)$. 
    By construction every $i$ in $B(\ell)$ in fact appears in a factor of the form $B(b^-)$, and every such occurrence of $B(b^-)$ does not contain the letter $b$ and is preceded and followed by $b$ in $B(\ell)$.
    Hence we have $W=b\circ B(b^-)\circ b$.
    
    Otherwise let $\ancestorvariable$ be the ancestor of $b$ (with possibly $\ancestorvariable=b$) such that $i\in \ct(\ancestorvariable^+)$. 
    Again, note that by construction each occurrence of $i$ in $B(\ell)$ lies in a factor of the form $B(\ancestorvariable)$.
    We argue that $W$ must be factor of $B(\ancestorvariable)$. 
    Indeed, note that if $W$ is not a factor of $B(\ancestorvariable)$ then by Observation~\ref{obs:factors-contain-sp} it contains the parent of $\ancestorvariable$.
       
    But then, by construction, $W$ contains either a suffix or a prefix of
    \[
    B(\ancestorvariable)=\ancestorvariable\circ B(\ancestorvariable^-)\circ \ancestorvariable\circ B(\ancestorvariable^+)\circ \ancestorvariable \circ B(\ancestorvariable^-)\circ \ancestorvariable.
    \]
    As $i$ appears only in $B(\ancestorvariable^+)$
    and $b$ appears in $k\circ B(\ancestorvariable^-)$, we get that $b$ is an internal letter of $W$, a contradiction.
    
    So $W$ is indeed a factor of $B(k)$. From the definition of $B(k)$ as recalled just above, we conclude that $W$ starts with the last occurrence of $b$ in the prefix $\ancestorvariable\circ B(\ancestorvariable^-)\circ \ancestorvariable$ and ends with the first occurrence of $b$ in the suffix $\ancestorvariable\circ B(\ancestorvariable^-)\circ \ancestorvariable$.

    In both cases, $W$ is uniquely characterized, as desired.
%    The last assertion then follows from the definition of $\reach_b$.
\end{proof}

The following relation will be at the core of our proof.

\begin{definition}
    For every $b \in \intv{1}{\ell+1}$, the relation \new{$\reach_b$} is defined on integers $i,j$ smaller than $b$ as follows: $i \reach_b j$ if there is a factor of $B(\ell)$ containing $i$ and $j$ but not $b$.
\end{definition}

Clearly the relation $\reach_b$ is reflexive and symmetric. 
Proposition~\ref{prop:factors-bounded-by-b-are-equal} above implies that it is transitive. Hence it is an equivalence relation.
Note that we allow $b=\ell+1$, in which case we trivially have $i \reach_b j$ for all $i,j\in \intv{1}{\ell}$.

\begin{proposition}\label{prop:same-subtree}
    Let $b\in \{1,\dots,\ell\}$ and let $i,j<b$ be such that $i \reach_b j$.
    Then $i$ and $j$ lie either both in $\ct(b^-)$, or both in $\ct(\ancestorvariable^+)$ for some ancestor $\ancestorvariable$ of $b$ with possibly $\ancestorvariable=b$. 
\end{proposition}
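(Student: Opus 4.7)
The strategy is to take a factor $W$ of $B(\ell)$ witnessing $i \reach_b j$ (that is, $W$ contains both $i$ and $j$ and has no occurrence of $b$) and, by case analysis on where $i$ and $j$ lie in $\ct$, to show that they must fall in a common region of the partition described in the statement. I first apply Proposition~\ref{prop:smaller-index-incomparable-right-child} to each of $i$ and $j$: each lies in $\ct(b^-)$ or in $\ct(k^+)$ for some ancestor $k$ of $b$ (possibly $k = b$). If both are in $\ct(b^-)$, or both in the same $\ct(k^+)$, the conclusion holds.

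Suppose next that exactly one of $i,j$ lies in $\ct(b^-)$; say $i \in \ct(b^-)$ and $j \in \ct(k_j^+)$ for some ancestor $k_j$ of $b$ (or $k_j = b$). Then the unique path from $i$ to $j$ in $\ct$ must exit $\ct(b^-)$ via the edge $(b^-, b)$ and therefore pass through $b$. Observation~\ref{obs:factors-contain-sp} then forces $b \in W$, contradicting the choice of $W$.

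The remaining and main case is when $i \in \ct(k_i^+)$ and $j \in \ct(k_j^+)$ with $k_i \neq k_j$. Without loss of generality $k_i$ is a strict ancestor of $k_j$ in $\ct$, so by Observation~\ref{obs:descendant-smaller} we have $k_i > k_j$ and $\ct(k_j) \subseteq \ct(k_i^-)$. I then extend $W$ to a factor $W'$ of $B(\ell)$ that starts and ends with an occurrence of $b$ and has no internal occurrence of $b$; such an extension exists because, by Observation~\ref{obs:decreasing-indices} applied with $\ell$ in place of $k$ (and its symmetric counterpart for last occurrences, both of which follow from the recursive shape of $B$), an occurrence of $b$ sits in $B(\ell)$ both before and after every occurrence of $i$. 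Proposition~\ref{prop:factors-bounded-by-b-are-equal} now applies in two ways. Taking $i$ as the distinguished index, $W'$ has a shape built around $k_i \circ B(k_i^+) \circ k_i$ (as can be read from the proof of that proposition) and hence contains the letter $k_i$. Taking $j$ instead, $W'$ is a factor of $B(k_j)$, so all its letters belong to $V(\ct(k_j))$ and are consequently at most $k_j$. Since $k_i > k_j$, the letter $k_i$ cannot appear in the second description, contradicting its presence in the first.

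The main obstacle I anticipate is this last case: one must correctly extend $W$ to a $b$-bounded factor $W'$ and then extract, from the proof of Proposition~\ref{prop:factors-bounded-by-b-are-equal}, the two canonical shapes of $W'$ relative to $i$ and to $j$. The contradiction then follows cleanly from comparing the largest letter appearing in each shape.
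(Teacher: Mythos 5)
Your proof is correct, and the overall skeleton (apply Proposition~\ref{prop:smaller-index-incomparable-right-child} to each of $i$ and $j$, then derive a contradiction when the two resulting regions differ) matches the paper's. The mechanism for the contradiction in the main case, however, is genuinely different. The paper works directly with a minimal $b$-avoiding factor containing $i$ and $j$: by Observation~\ref{obs:factors-contain-sp} it contains the path between the two attachment points, hence at least two ancestors of $b$; taking the lowest such ancestor $k$ (which satisfies $b\in \ct(k^-)$ while one of $i,j$ lies in $\ct(k^+)$), the recursive shape of $B(k)$ forces the factor to traverse a full copy of $B(k^-)$ and thus to contain $b$. You instead extend the witness $W$ to the canonical $b$-bounded factor $W'$ and invoke Proposition~\ref{prop:factors-bounded-by-b-are-equal} twice, once for $i$ and once for $j$, obtaining two structural descriptions of the \emph{same} word whose maximal letters disagree ($k_i$ versus something at most $k_j<k_i$). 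Your route is a nice economy: the structural work is already done inside the proof of Proposition~\ref{prop:factors-bounded-by-b-are-equal}, and your extension step (every occurrence of an index below $b$ is preceded and followed by an occurrence of $b$, via Observation~\ref{obs:decreasing-indices} and its mirror) is sound. The price is that you rely on the explicit shapes established in the \emph{proof} of that proposition rather than on its statement (which only asserts uniqueness), and you implicitly use the canonical choice of the ancestors $k_i,k_j$ (so that $b\in\ct(k_i^-)$, whence $\ct(k_j)\subseteq \ct(k_i^-)$); both points are harmless but would deserve a sentence if written up. The paper's version is more self-contained, yours is shorter once Proposition~\ref{prop:factors-bounded-by-b-are-equal} is in hand.
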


\begin{proof}
    By Proposition~\ref{prop:smaller-index-incomparable-right-child}, since $i<b$, $i$ either lies in $\ct(b^-)$, or in $\ct(\ancestorvariable_1^+)$ for some ancestor $\ancestorvariable_1$ of $b$ with possibly $\ancestorvariable_1=b$.
    Since $j<b$, we obtain analogously that $j$ lies in $\ct(b^-)$, or in $\ct(\ancestorvariable_2^+)$ for an ancestor $\ancestorvariable_2$ of $b$.

    If $i,j \in \ct(b^-)$ we are done. Below we consider the other cases.
    Consider a minimum factor $W$ of $B(\ell)$ containing $i$ and $j$ but avoiding $b$, which exists since $i \reach_b j$. 
    By Observation~\ref{obs:factors-contain-sp}, this factor contains the indices of the shortest path connecting $\ancestorvariable_1$ and $\ancestorvariable_2$ in $\ct$.

    In the case where $i\in \ct(b^-)$ and $j \in \ct(\ancestorvariable_2^+)$ for some ancestor $\ancestorvariable_2$ of $b$ (and the symmetric case where the roles of $i$ and $j$ are reversed), $W$ contains $b$ since it lies on the $i$-$j$ path of $\ct$, a contradiction.

    In the case where $i \in V(\ct(\ancestorvariable_1^+))$ and $j \in V(\ct(\ancestorvariable_2^+))$ for some ancestors $\ancestorvariable_1^+, \ancestorvariable_2^+$ of $b$, we may assume by symmetry that $\ancestorvariable_1^+$ is lower than $\ancestorvariable_2^+$. Then $b \in V(\ct(\ancestorvariable_1^-)$. If $\ancestorvariable_1 \neq \ancestorvariable_2$, by definition of $B(\ell)$ the factor $W$ contains all the indices of $\ct(\ancestorvariable_1^-)$, including $b$, a contradiction.
    Hence $\ancestorvariable_1=\ancestorvariable_2$ and the statement follows.
\end{proof}

In the remaining of the proof, we will be interested in restrictions of the reachable indices we define as 
\begin{align*}
    \reach^+_b(i)&= \{j : j \reach_b i\ \text{and}\ j>i\},\\
    \reach^-_b(i)&= \{j : j \reach_b i\ \text{and}\ j<i\}.
\end{align*}

\subsection{Ribbed-tree and node-barriers}\label{sec:ribbed-tree}

So far we described two different types of objects: skeleton-trees and index-trees. In this section we explain how to construct a graph called \emph{ribbed-tree}, obtained by modifying a skeleton-tree according to a given index-tree.
These modifications consist in adding edges playing the aforementioned role of ``barriers'' for induced paths, in the sense that they will prevent the later from extending too long. 

Let us consider, for some integer $\ell$, an $\ell$-index-tree $\ct$. Note the number of indices in $\ct$ equals the number of ranks in $\st_\ell$.
For every edge of $\st_\ell$ we perform modifications that we describe now.

Let $s,t$ be two adjacent nodes of $\st_\ell$, with  $s\prec t$, and let $i,j$ be their respective ranks.
Then either $i\geq 2$, in which case we have $j=i-1$, or $i=1$, in which case $j$ may be any index in $\{1,\dots,\ell-1\}$; see Remark~\ref{rem:diffij}.

\begin{figure}[ht]
    \centering
    \includegraphics[scale=1]{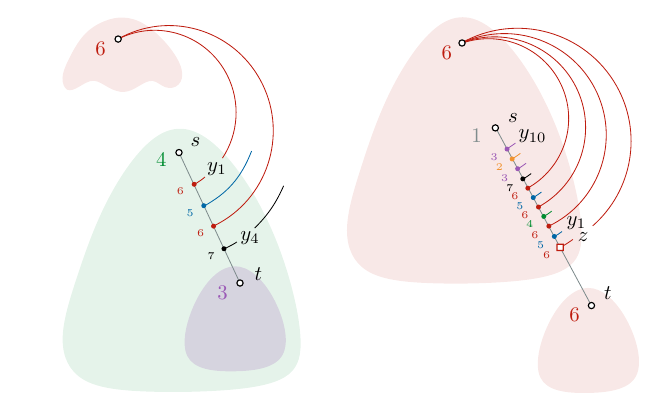}
    \caption{The construction of the barriers in the two cases where $4=i\geq 2$ (left) and $i=1$ (right), for $\ell=7$ and with $\ct$ being a complete binary tree as in Figure~\ref{fig:index-tree}. 
    The ranks of the nodes are indicated by colored integers.
    The shaded areas correspond to the zones, with the zone of $6$ partially represented on the left for better readability: it should be understood that it contains the two other represented zones as well.
    The square node (right) represents the extra subdivision $z$ in the case where $i=1$. 
    Note that the obtained subdivided nodes are considered in the zone of their parent node $s$.}
    \label{fig:node-barrier}
\end{figure}

Let us first describe the modifications to perform when $i\geq 2$.
These modifications are better understood accompanied with Figure~\ref{fig:node-barrier}.
Let $p=|B(i,j)|-2$. If $p=0$, then $i$ and $j$ are adjacent in $\ct$. 
In this case we do not modify the edge $st$.
Otherwise, we subdivide it $p$ times, where each obtained node will represent an internal index of $B(i,j)$.
Formally, let $y_1,\dots,y_p$ be the obtained subdivided vertices, in this order from $s$ to $t$. 
Let $c_1,\dots, c_{p} \in \intv{1}{\ell}$ be the indices such that $B(i,j) = i c_1\cdots c_p j$. 
For every $1\leq k\leq p$ we add the edge $y_ks_k$ where $s_k=\ancestor_s(c_k)$, i.e., we connect $y_k$ to the node of $\st_\ell$ of rank $c_k$ whose zone contains~$s$.
Note that distinct such $y_k$ may be adjacent to a same node $s_k$, if they represent a same index of the index-barrier. 
We point that these modifications are performed indifferently if $t$ is a left child of $s$, or if it is a right child.

If $i=1$ we proceed analogously as in the first case, except that we consider the index-barrier from $j$ to $i$ instead, since $j\geq i$, and subdivide the edge once more with the final node being adjacent to its ancestor of rank $j$.
More formally, we proceed as follows.
If $i=j=1$, let $p=0$.
Otherwise let $p=|B(i,j)|-2$ and note that $p\geq 0$. 
We subdivide $p+1$ times the edge $st$.
Let $z,y_1,\dots,y_p$ be the obtained vertices, in order from $t$ to $s$ this time, i.e., from bottom to top in the skeleton-tree.
If $p\geq 1$, let $c_1,\dots, c_{p} \in \intv{1}{\ell}$ be the indices such that $B(j,1) = j c_1\cdots c_p 1$.
For every $1\leq k\leq p$ we add the edge $y_ks_k$ where $s_k=\ancestor_s(c_k)$.
Then we add the edge $zs'$ for $s'=\ancestor_s(j)$. 

The graph resulting of the above process, denoted by $\rt(\ct)$, is called the \emph{ribbed-tree}\mgnt{ribbed-tree, $\rt(\ct)$} obtained from $\ct$.
We call \emph{$\ell$-ribbed-tree} any ribbed-tree obtained from an $\ell$-index-tree.

Since we only added vertices and edges, we trivially obtain the following.

\begin{remark}\label{rem:ribbed-tree-size}
    By construction and Remark~\ref{rem:skeleton-tree-size}, for every $\ell$-ribbed-tree $\rt$ we have $|V(\rt)|= \Omega\big(2^{2^\ell}\big)$. % je trouve que LaTeX exagère le spacing avec left et right dans ce cas particulier
\end{remark}

In the remaining of the paper, we call \new{barrier-path} the resulting path obtained by subdividing an edge $st$ (including its endpoints $s$ and $t$).
We call \new{blocking-node} the internal nodes of these paths, and \new{ribs} the incident edges that connect them to their zone ancestor.
The \emph{bottom endpoint}\mgnt{bottom/top endpoint} of a rib is the node that is a blocking-node; the \emph{top endpoint} is its other endpoint.
The extra rib that is incident to the node $z$ as described above in the case where $i=1$ is called a \new{hopping rib}.

By \new{barrier} we mean a barrier-path and its incident ribs.

Note that each blocking-node $x$ is naturally associated to an index in $\{1,\dots,\ell\}$ it represents, which is the rank of the node $s$ it shares the rib with.
We extend the $\rank$ function to map every such $x$ to $\rank(s)$.
Furthermore, we extend the definition of $\zone(s)$ to include the blocking-nodes of any barrier-path between a node $s'$ in $\zone(s)$ and one of its children $t$.

\subsection{The blow-up of a ribbed-tree}\label{sec:blow-up}

In the previous section we described the concept of a ribbed-tree. The purpose of the edges added in the construction of the ribbed-tree is to ensure that there is no long induced path, because our final goal is to construct a graph where all the induced paths are short. However, we also want this graph to have a long path and to be 2-degenerate. 
Therefore in this section we describe an operation to transform a ribbed-tree into a graph that contains a Hamiltonian path and is $2$-degenerate (the proof that it is indeed the case is given in Section~\ref{sec:traceable-and-degenerate}).
This operation, that we call the \new{blow-up}, is defined as follows. 
It is similar to the ``blow-up operation'' described in \cite{defrain2024sparse} but differs on the way ribs are handled.
These modifications are better understood accompanied with Figures~\ref{fig:blowup} and \ref{fig:G3}.

Let $\ell$ be an integer, $\ct$ be an $\ell$-index-tree an let $\rt := \rt(\ct)$.
For every node $s$ of $\rt$ that is not a blocking-node we create a clique on three vertices \new{$K^s$} and choose an edge of $K^s$ to be its \new{top edge}.  
Vertices of $K^s$ are called \emph{triangle-vertices}\mgnt{triangle-vertex}.
In the top edge, we distinguish one endpoint that we call the \emph{left endpoint}\mgnt{left/right endpoint}, and the other is called the \emph{right endpoint}.
The vertex not lying in the top-edge is called \emph{bottom triangle-vertex}\mgnt{top/bottom triangle-vertex} and those of the top-edge are the (left or right) \emph{top triangle-vertices}.

For every barrier between two nodes $s,t$ in $\rt$ with $s\prec t$, we proceed as follows:
\begin{itemize}
    \item First, we start with a copy of the node-barrier, including $s$ and $t$. 
    Then we replace each blocking-node $x$ representing an index $i$ by a path $x_1x_2x_3$, where $x_1$ is adjacent to the neighbor of $x$ preceding it (when following the node-barrier from $s$ to $t$), and $x_3$ is adjacent to neighbor of $x$ following it.
    We will say that $x_1,x_2,x_3$ are the \emph{representatives}\mgnt{representative} of the occurrence of the index $i$ in the associated index-barrier.
    
    \item Recall that for every blocking-node $x$ of the barrier, a rib $e$ was connecting it to an ancestor node $s'$. 
    To represent $e$, we create three ribs $e_1,e_2,e_3$ with $x_1$ connecting $e_1$ to the bottom triangle-vertex of the triangle $K^{s'}$, $e_2$ connecting $x_2$ to the right-endpoint of the top-edge of this triangle, and $e_3$ connecting $x_3$ to the left-endpoint.
    We call \new{left barrier-path} the obtained path, and \new{left barrier} the left barrier-path together with its incident ribs.

    \item We duplicate the \emph{left barrier} to obtain a second barrier that shall be referred to as the \emph{right}\mgnt{right barrier} one.

    \item Finally, we connect the left and right barriers between triangles in the following way.
    
    If $t$ is the left (resp.~right) child of $s$, we proceed as follows.
    We identify the left (resp.~right) endpoint of the top edge of $K^s$ with the endpoint $s$ of the left (resp.~right) barrier-path, and the left (resp.~right) endpoint of the top edge of $K^t$ with the endpoint $t$ of the left (resp.~right) barrier-path. 
    Then we identify the bottom triangle-vertex of $K^s$ with the endpoint $s$ of the right (resp.~left) barrier-path, and the right (resp.~left) endpoint of the top edge of $K^t$ with the endpoint $t$ of the right (resp.~left) barrier-path. 

    Note that the differences between a left and a right child are mirrored; see Figure~\ref{fig:G3} for a global illustration of the result.
\end{itemize}

\begin{figure}[ht]
    \centering
    \includegraphics[scale=1, page=2]{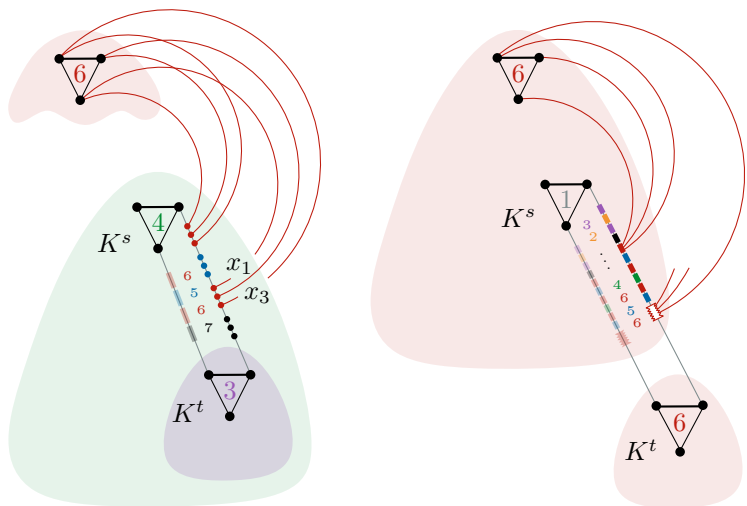}
    \caption{The blowup of a ribbed-tree detailed on the two cases where $i\geq 2$ (actually $i=4$) on the left and $i=1$ on the right, for $\ell=7$. The groups of 3 consecutive vertices $x_1,x_2,x_3$ representing an index of the index-barrier are represented by rectangles, and detailed only once in the right barrier of the case $i\geq 2$ (left).
    Spiky rectangles represent vertices incident to hopping ribs.
    Thick edges represent the top edges of the triangles.}
    \label{fig:blowup}
\end{figure}

The graph $G$ obtained by this operation is defined as the \emph{blow-up} of $\rt$. Note that $G$ is uniquely determined by the choice of $\ct$ hence we denote by \new{$G(\ct)$} the blow-up of the ribbed-tree obtained from $\ct$. A partial representation of the graph $G$ obtained as above from an index-tree $\ct$ that is a complete binary tree on three indices is given in Figure~\ref{fig:G3}.

\begin{figure}
    \centering
    \includegraphics[scale=1]{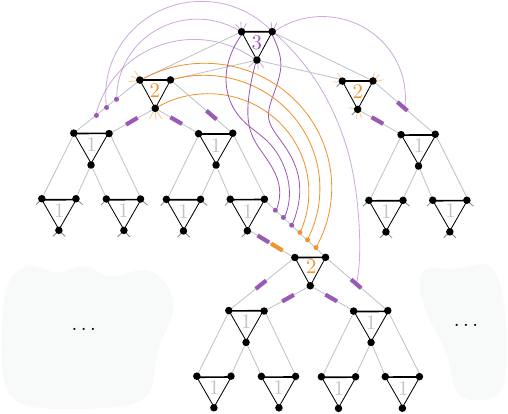}
    \caption{A partial representation of the graph $G$ where $\ct$ is a complete binary tree on three indices.
    The rank of triangle-vertices are depicted within the triangles. 
    For better readability, not all ribs are depicted; it should be understood that the triangle at the root sends ribs to any blocking-vertex of rank $3$, in purple in the picture.}
    \label{fig:G3}
\end{figure}

In the following, to avoid confusion between $G$ and the other auxiliary structures, we will exclusively refer to the elements of $V(G)$ as \emph{vertices}.
Note that for every triangle-vertex $v$ in $G$, there is a node $t$ of $\st_\ell$ such that $v$ belongs to the triangle $K^t$. 
We denote this node by \new{$\pi(v)$}.
Similarly, if $x$ is a representative then $\pi(x)$ denote the blocking node of $\rt$ it originates from.

We extend all the terminology from $\rt$ to $G$ as follows, with the additional technicality that now, to a given node $t$ corresponds a set of three vertices, namely, those in $K^t$:
\begin{itemize}
    \item $\rank(v):=\rank(\pi(v))$;

    \item $\zone(v):= \{u\in V(G) : \pi(u) \in \zone(\pi(v))\}$; and
    
    \item $\zone_v(i):= \{u\in V(G) : \pi(u) \in \zone_{\pi(v)}(i)\}$; and

    \item $\ancestor_v(i):= \{u\in V(K^s) : s=\ancestor_{\pi(v)}(i)\}$.
\end{itemize}

In particular, we will say that $K$ is a \emph{child triangle}\mgnt{child/parent triangle} of $K'$ if $t$ is the child of $s$ in $\st_\ell$ for $s$ and $t$ such that $K=K^{\pi(s)}$ and $K'=K^{\pi(t)}$; we define the \emph{parent triangle} analogously.

\begin{remark}\label{rem:graph-size}
By construction and Remark~\ref{rem:ribbed-tree-size}, for every $\ell$-index tree $\ct$ we have $|V(G(\ct))|=\Omega\left (2^{2^\ell} \right )$.
\end{remark}

We conclude the section with the next definition and observation on the sequence of ranks in barrier-paths.
Recall that the collapse of a word is the contraction of repeated consecutive entries; see Section~\ref{sec:prelim}.
The last remark of the following observation is a consequence of Observation \ref{obs:i-iminusone-intermediate} and the presence of hopping ribs.

\begin{definition}\label{def:trace}
    The \new{trace} of a sequence $v_1,\dots,v_q$ of vertices in $G$ is the word obtained by collapsing  $\rank(v_1)\circ \cdots\circ \rank(v_q)$.
\end{definition}

\begin{observation}\label{obs:barrier-trace}
    Let $Q$ be a barrier-path from a triangle-vertex $u$ to a triangle vertex $v$ (with $u$ and $v$ included) where $\pi(u)\prec \pi(v)$. 
    Let $i:=\rank(u)$ and $j:=\rank(v)$.
    Then the trace of $Q$ is equal to:
    \begin{itemize}
        \item $B(i,i-1)$, if $i\geq 2$;
        \item the reverse of $B(j,1)$, if $i=1$.
    \end{itemize}
    Moreover, if $Q'$ is the subpath of $Q$ consisting of blocking-vertices, then its trace does not contain $i$ and $i-1$ if $i\geq 2$, and it always contains $j$ if $i=1$. 
\end{observation}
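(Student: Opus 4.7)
The plan is to unfold the construction of the ribbed-tree and of the blow-up, read off the sequence of ranks along $Q$, and then apply the collapse. As a preliminary step, I would note that in the blow-up every blocking-node $x$ becomes a path $x_1 x_2 x_3$ of three vertices, all of rank $\rank(x)$; since three consecutive equal ranks collapse to one, the trace of $Q$ coincides with the trace of the corresponding barrier-path of $\rt(\ct)$ running from $\pi(u)$ to $\pi(v)$. It therefore suffices to compute the ranks along that underlying ribbed-tree barrier-path.

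I would then split into the two cases of Section~\ref{sec:ribbed-tree}. When $i \geq 2$, the skeleton-tree construction forces $j = i-1$, and the edge $\pi(u)\pi(v)$ is subdivided by $p = |B(i,i-1)|-2$ blocking-nodes $y_1,\dots,y_p$ ordered from $\pi(u)$ to $\pi(v)$, with $\rank(y_k)=c_k$ for the letters defined by $B(i,i-1) = i\, c_1\cdots c_p\, (i-1)$; the trace of the barrier-path is then exactly $B(i,i-1)$. When $i=1$, the edge $\pi(u)\pi(v)$ is subdivided into $p+1$ vertices listed from $\pi(v)$ to $\pi(u)$ as $z,y_1,\dots,y_p$, where $\rank(z)=j$ (witnessed by the hopping rib joining $z$ to $\ancestor_{\pi(u)}(j)$) and $\rank(y_k)=c_k$ with $B(j,1)= j\, c_1\cdots c_p\, 1$. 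Reading from $\pi(u)$ to $\pi(v)$ the ranks are $1\, c_p\, c_{p-1}\cdots c_1\, j\, j$, whose collapse is the reverse of $B(j,1)$.

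For the moreover part, I would observe that removing the two triangle-vertex endpoints $u$ and $v$ from $Q$ leaves precisely the representatives of the internal blocking-nodes of the ribbed-tree barrier-path. When $i\geq 2$, these carry the ranks $c_1,\dots,c_p$, i.e.\ the internal letters of $B(i,i-1)$, which by Observation~\ref{obs:i-iminusone-intermediate} contain neither $i$ nor $i-1$. When $i=1$, they include the three representatives of $z$, all of rank $j$, so the trace of $Q'$ contains $j$.

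I do not expect a real obstacle here, as the argument is essentially bookkeeping on the definitions. The only subtlety to keep straight is the reversed listing of the subdivision vertices in the case $i=1$ together with the role of the extra vertex $z$ produced by the hopping rib: together they account both for the reversal of $B(j,1)$ and for the appearance of $j$ in the trace of $Q'$, even though $j$ is not an internal letter of $B(j,1)$.
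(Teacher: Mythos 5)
Your proposal is correct and matches the reasoning the paper intends: the observation is stated without proof as a direct consequence of the construction, with the ``moreover'' part attributed to Observation~\ref{obs:i-iminusone-intermediate} and the hopping ribs, which is exactly what your bookkeeping verifies. In particular you correctly handle the two subtle points — that the three representatives of a blocking-node collapse to a single letter, and that in the case $i=1$ the reversed listing $z,y_1,\dots,y_p$ together with $\rank(z)=j$ yields both the reversal of $B(j,1)$ and the occurrence of $j$ in the trace of $Q'$.
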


\subsection{Traceability and degeneracy}\label{sec:traceable-and-degenerate}

We note that the graph $G(\ct)$ described previously only differs from the one constructed in \cite{defrain2024sparse} up to a larger number of subdivisions performed between triangles, each of which yielding a single extra rib.

Consequently, an analysis similar to the one conducted in~\cite[Lemmas~4.1 and 4.3]{defrain2024sparse} yields the two following lemmas, whose proofs are included for the sake of completeness.

\begin{lemma}\label{lem:traceable}
    For any index-tree $\ct$ the graph $G(\ct)$ contains a Hamiltonian path.
\end{lemma}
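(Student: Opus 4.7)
The plan is to prove, by induction on the depth of the subtree of $\st_\ell$ rooted at a node $s$, a slightly stronger statement: for every node $s$ of $\st_\ell$, the subgraph $G_s$ of $G(\ct)$ formed by $K^s$, by $K^t$ for every descendant $t$ of $s$, and by all the blown-up barriers between these triangles admits a Hamiltonian path whose two endpoints are the left-top and right-top triangle-vertices of $K^s$. The lemma follows by taking $s$ to be the root of $\st_\ell$, since in that case $G_s = G(\ct)$.

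The base case is immediate: if $s$ is a leaf of $\st_\ell$, then $G_s=K^s$ and the two-edge walk going from the left-top vertex through the bottom vertex to the right-top vertex is the required Hamiltonian path.

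For the inductive step, let $s$ have children $t_L$ (left) and $t_R$ (right). I would first record that every blown-up barrier between two adjacent nodes of $\rt(\ct)$ is a simple path: each blocking-node $x$ is replaced by three representatives $x_1x_2x_3$ inserted consecutively and adjacent only to the neighbours of $x$, so the barrier-path remains a path from its $K^s$-endpoint to its $K^t$-endpoint and can be traversed in full. Then I would read off the gluing rules of Section~\ref{sec:blow-up} to list the four relevant endpoints at $K^s$ and at $K^{t_L}, K^{t_R}$: for $t_L$ the left barrier-path joins the left-top of $K^s$ to the left-top of $K^{t_L}$ and the right one joins the bottom of $K^s$ to the right-top of $K^{t_L}$; for $t_R$, by the mirrored gluing, the left barrier-path joins the bottom of $K^s$ to the left-top of $K^{t_R}$ and the right one joins the right-top of $K^s$ to the right-top of $K^{t_R}$.

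By the induction hypothesis, there are Hamiltonian paths $P_L$ and $P_R$ of $G_{t_L}$ and $G_{t_R}$, each joining the left-top to the right-top of the corresponding root triangle. I would then assemble the Hamiltonian path of $G_s$ by concatenating, in order, the left-top of $K^s$, the left barrier-path to $t_L$ (arriving at the left-top of $K^{t_L}$), the path $P_L$ (ending at the right-top of $K^{t_L}$), the right barrier-path back from $t_L$ (arriving at the bottom of $K^s$), the left barrier-path to $t_R$ (arriving at the left-top of $K^{t_R}$), the path $P_R$ (ending at the right-top of $K^{t_R}$), and finally the right barrier-path back from $t_R$ (arriving at the right-top of $K^s$). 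The three vertices of $K^s$ are each used exactly once; every representative in the four barrier-paths incident to $K^s$ is visited once since each such path is traversed in full; and by induction the vertices of $G_{t_L}$ and $G_{t_R}$ are each visited exactly once. Thus the resulting walk is a Hamiltonian path of $G_s$ with the required endpoints.

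The only non-routine point — and the main obstacle — is verifying that the mirrored gluing between left and right children really produces the four endpoints listed above, so that each junction in the concatenation matches correctly; this is a short case analysis from the definitions in Section~\ref{sec:blow-up}. Ribs and hopping ribs are simply never used by the Hamiltonian path, which is why, as noted in the paper, the argument is essentially identical to the one in~\cite{defrain2024sparse} despite the additional subdivisions introduced here.
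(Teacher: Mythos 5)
Your proof is correct: the induction on subtrees of $\st_\ell$, maintaining a Hamiltonian path of each blown-up subtree with endpoints at the two top triangle-vertices of the root triangle, is exactly the argument the paper delegates to~\cite{defrain2024sparse}, and your reading of the gluing rules of Section~\ref{sec:blow-up} (which barrier-path endpoint is identified with which triangle-vertex for a left versus a right child) checks out, so all junctions in your concatenation match. The only cosmetic slip is calling the endpoints of a barrier ``adjacent nodes of $\rt(\ct)$'' rather than of $\st_\ell$; this does not affect the argument.
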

\begin{proof}
    Let $\ell = V(\ct)$ (i.e. $\ct$ is a $\ell$-index tree) and $G = G(\ct)$.  For ever node $s$ of $\st_\ell$ we denote by $G_{\downarrow s}$ the subgraph of $G$ induced by all the triangle vertices of some $K^t$ where $t$ is a descendant of $s$ in $\st_\ell$ (including the case $t=s$) as well as all barrier-paths between these triangles.

    We will prove by induction on the structure of the graph the following statement: for every node $s$ of $\st_\ell$, the graph $G_{\downarrow s}$ has a Hamiltonian path from the right to the left top triangle vertices of $K^s$.
    This implies the statement of the lemma when $s$ is the root of $S_\ell$.

    The base case is when $s$ is a leaf of $\st_\ell$. In this case $G_{\downarrow s}$ is a triangle and the desired property clearly holds.

    Suppose now that $s$ is not a leaf and that the property holds both for it left child $t_L$ and its right child $t_R$.      
    Then a Hamiltonian path of $G_{\downarrow s}$ can be obtained as follows: starting from the right top triangle vertex of $K^s$, we follow the barrier-path leading to the right top triangle vertex of $K^{t_R}$, then we follow the Hamiltonian path of $G_{\downarrow t_R}$ provided by the induction hypothesis and end up at the left top triangle vertex of $K^{t_R}$, then we follow the barrier-path leading to the bottom triangle vertex of $K^s$ followed by that leading to the right top triangle vertex of $K^{t_L}$, we follow the Hamiltonian path of $G_{\downarrow t_L}$ given by the induction hypothesis and conclude with the barrier-path ending at the left top triangle vertex of $K^s$.
    \end{proof}
\begin{lemma}\label{lem:degenerate}
    For any index-tree $\ct$ the graph $G(\ct)$ is $2$-degenerate.
\end{lemma}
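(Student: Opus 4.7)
The plan is to exhibit a linear ordering $\sigma$ of $V(G(\ct))$ in which every vertex has at most two neighbors among its successors; this property is equivalent to $G(\ct)$ being $2$-degenerate, so it will suffice.

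The ordering is defined by a post-order traversal of $\st_\ell$: whenever a node $s$ is visited, after all its descendant zones have been listed, we first enumerate the representatives of each barrier between $s$ and a child $t$ in the direction from $t$ to $s$ (and, within each triple $x_1 x_2 x_3$, going from the $t$-side $x_3$ to the $s$-side $x_1$), and then list the three vertices of $K^s$ in the order \emph{bottom}, \emph{left-top}, \emph{right-top}. The key structural fact validating this order is that every rib leaving a blocking-node in a barrier with upper endpoint $s$ goes to a triangle $K^{s'}$ with $s'$ a strict ancestor of $s$ in $\st_\ell$: Observation~\ref{obs:i-iminusone-intermediate} shows that in the $\rank(s)\geq 2$ case the internal indices $c_k$ of $B(\rank(s),\rank(s)-1)$ are all $>\rank(s)$, while Observation~\ref{obs:i-j-intermediate} shows that in the $\rank(s)=1$ case the internal indices of $B(j,1)$ avoid $1$, and the hopping rib also goes to a strict ancestor except possibly when $\rank(t)=1$, a case that reduces to a single subdivision vertex and can be verified by hand. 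Consequently, every rib-target in $G(\ct)$ is listed strictly after its owner under $\sigma$.

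The remainder of the verification is local. A representative $x$ inside a barrier has exactly three neighbors in $G(\ct)$: its two path-neighbors split as one earlier (towards $t$) and one later (towards $s$) under $\sigma$, and its rib-target is later by the previous paragraph, giving at most two successors. For the three vertices of $K^s$, note that at the moment they are listed all descendants of $s$ have been processed, so all incoming ribs to $K^s$ and all edges to first representatives of barriers going \emph{down} to the children of $s$ are already edges between listed vertices. The only potentially unlisted neighbors are then the triangle edges inside $K^s$ and, if $s$ has a parent, the identifications of the two top vertices with the $s$-endpoints of the two upward barriers. Listing bottom, then left-top, then right-top then yields at most $2$, $2$, $1$ successor neighbors, respectively (or $2$, $1$, $0$ if $s$ is the root).

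The main subtlety lies in the careful bookkeeping of the barrier-to-triangle identifications: the bottom of $K^s$ is identified with two downward $s$-endpoints (one per child), and each top of $K^s$ with at most one downward and at most one upward $s$-endpoint, which is exactly what forces the specific bottom $\to$ left-top $\to$ right-top order to discharge the triangle edges at the right time. The argument follows the same scheme as \cite[Lemma~4.3]{defrain2024sparse}; the new ingredients compared with that construction (three representatives per blocking-node and hopping ribs) each contribute at most one later neighbor and therefore fit within the $\leq 2$ budget.
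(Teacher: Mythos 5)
Your argument is correct. Note that the paper itself omits this proof, deferring to the analysis of \cite[Lemma~4.3]{defrain2024sparse}; your explicit post-order elimination ordering is exactly the kind of argument that citation stands for, and you correctly handle the two new ingredients (the triples of representatives and the hopping ribs), including the key point that rib targets always lie at or above the upper endpoint of their barrier (via Observations~\ref{obs:i-iminusone-intermediate} and~\ref{obs:i-j-intermediate}) and the degenerate $\rank(s)=\rank(t)=1$ case where the hopping rib returns to $K^s$ itself, which your ordering still places later. So the proposal is a complete and correct proof in the same spirit as the one the paper invokes by reference.
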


\begin{proof}
    % Recall that $G(\ct)$ is obtained from a skeleton tree of appropriate height by:
    % \begin{itemize}
    %     \item adding ribs, which consists in subdividing some edges and adding for each newly created subdivision vertex an edge to some if its ancestors, in a way prescribed by the index-tree $\ct$; and
    %     \item performing a blow-up of the obtained graph.
    % \end{itemize}
    % These steps are formally described in Sections~\ref{sec:ribbed-tree} and~\ref{sec:blow-up} (see also Figure~\ref{fig:summary}).
    %
    Let $\ell = V(\ct)$ (i.e. $\ct$ is a $\ell$-index tree) and $G=G(\ct)$.
    For every node $s$ of $\st_\ell$ that is not the root, the barrier-paths \new{above} $K^s$ are the barrier-paths of $G$ between $K^s$ and $K^t$, where $t$ denotes the parent of $s$ in $\st_\ell$.
    We define a partial order $\ord$ on the vertices of $G(\ct)$ as follows.
    For every $u,v \in V(G(\ct))$, we have $u \ord v$ if there are some $s,t \in \st_\ell$ with $s\prec t$ such that one of the following holds:\footnote{Parentheses are added in order to avoid ambiguity about the precedence of logical operators.}
    \begin{enumerate}
        \item\label{e:sym} ($u\in K^s$ or $u$ is an internal vertex of some barrier-path above $K_s$) and ($v\in K^t$ or $v$ is in an internal vertex of some barrier-path above $K^t$), or
        \item \label{e:same} $u$ and $v$ belong to the same barrier-path between $K^s$ and $K^t$ (in whih case $t$ is a child node of $s$ in $\ct_\ell$) and $K^s$ is closer from $u$ than from $v$; or
        \item \label{e:tri} ($u,v\in K^s$ or $u,v\in K^t$) and $v$ is the bottom triangle vertex.
    \end{enumerate}    

    Let $H$ be a subgraph of $G$ and note let $u$ be a maximal vertex of $H$ with respect to~$\ord$.
    We will distinguish cases to show that $u$ has degree at most $2$ in $H$, which implies the desired statement.

    \begin{itemize}
        \item If $u$ is the interior vertex of a barrier path above $K^t$ for some $t\in \st_\ell$, it has 3 neighbors in $G$, by construction (two on the barrier-path and one via a rib). However the neighbor $v$ of $u$ on the barrier-path that is the closest to $K^t$ is such that $u \ord v$ according to item~\ref{e:same} above, hence $v \notin V(H)$ (by maximality of $u$) and $u$ indeed has at most two neighbors.
        \item If $u$ is a triangle vertex in some $K^s$ then in $G$ it is has neighbors that are endpoints of ribs, however by construction every such vertex belongs to a barrier paths above $K^t$ for some $t \in \st_\ell$ with $s\prec t$, so as above such neighbor do not appear in $H$. The same is true of neighbors in the barrier paths above $K^t$ when $t$ is a child of $s$. The possible remaining neighbors are the two other vertices of $K^s$ if $u$ is the bottom triangle vertex, or, if $u$ is a top triangle vertex, the other top triangle vertex and a vertex in the barrier path above. In both cases $u$ has degree at most 2 in $G$.
    \end{itemize}
    Recall that triangle vertices and interior vertices of barrier-path partition the vertex set of $G$, so we considered all cases. Hence $G$ is 2-degenerate.
\end{proof}

\section{Properties of induced paths}\label{sec:path-properties}

In this section we consider, for some $\ell\in \N$ and some $\ell$-index-tree $\ct$, the graph $G := G(\ct)$ and prove properties on particular induced paths it contains. The final goal is to show that $G$ does not contain long induced paths.

In the remaining of the section, let $P$ be an induced path of $G$.
Let us also fix an orientation of $P$, and further assume that its endpoints are triangle-vertices, with the first endpoint of largest rank among $P$.
For any two vertices $u$ and $v$ with $v$ after $u$ in $P$, we will use the following notations:
\begin{itemize}
    \item $\prefix{P}{v}$ is the prefix of $P$ ending in $v$; 
    \item $\suffix{P}{v}$ is the suffix of $P$ starting in $v$; 
    \item $\subpath{P}{u}{v}$ is the subpath from $u$ to $v$; and
    \item $\next(v)$ is the first triangle-vertex appearing after $v$ in $P$ and that does not belong to the same triangle as $v$.
\end{itemize}
Let us motivate the definition of the function $\next$.
Note that each triangle of $G$ may be intersected at most twice by $P$, and if it is intersected twice, then it is by two consecutive vertices of $P$.
Indeed, any other scenario would induce a chord in $P$, which is excluded.
Thus, and to avoid unnecessary technicalities, we will identify one of the two vertices to be a representative of the intersection of $P$ in each triangle.
This representative will be the first of the two vertices according to the ordering of $P$, and this is the vertex that the function $\next$ maps to.

Accordingly, in the remaining of the section, let us denote by \new{$\triangleseq(P)$} the sequence $v_1,\dots,v_p$ of vertices of $P$ where $v_1$ is the first (triangle-)vertex of $P$, $v_p$ belongs to the last triangle intersected by $P$, and for all $1\leq i< p$, $v_{i+1}=\next(v_i)$.

We start with the following observation which mainly follows by construction, and on which we will deeply rely, often implicitly.

\begin{proposition}\label{prop:index-trace-factor}
    If $u,v$ are two consecutive vertices in $\triangleseq(P)$, then the trace of $\subpath{P}{u}{v}$ is a factor of $B(\ell)$.
\end{proposition}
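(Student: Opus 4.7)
The plan is to show that the blocking-vertex representatives of $\subpath{P}{u}{v}$ all lie on a single barrier-path, and then apply Observation~\ref{obs:barrier-trace}.

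First I would analyse the structure of $Q := \subpath{P}{u}{v}$. By the discussion preceding the definition of $\triangleseq(P)$, the path $P$ hits each triangle in at most two consecutive vertices, and by choice of the representative $v$ is the first vertex of $K^{\pi(v)}$ visited by $P$. Hence $Q$ consists of $u$, possibly a second vertex of $K^{\pi(u)}$ (which has the same rank as $u$ and therefore does not contribute to the trace), a sequence $b_1,\dots,b_m$ of blocking-vertex representatives, then $v$. Consequently the trace of $Q$ equals the trace of the sequence $u, b_1,\dots,b_m, v$.

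Next I would show that $b_1,\dots,b_m$ all belong to a single barrier-path $\beta$ of $G$. An inspection of the blow-up construction reveals that the only neighbours of a blocking-vertex representative are: its partners in the same triple $\{x_1,x_2,x_3\}$; its neighbours in the same barrier-path (including an endpoint triangle-vertex when applicable); and triangle-vertices reached through ribs. In particular, any two consecutive $b_j, b_{j+1}$ in $P$ share a barrier-path, and any attempt to jump from one barrier-path to another would require passing through a triangle-vertex, which is forbidden strictly between $u$ and $v$ by definition of $\next$. This forces $b_1,\dots,b_m$ to form a subpath of a single $\beta$.

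Finally I would deal with the two boundaries. If $u$ is a triangle-vertex at an endpoint of $\beta$, then $\rank(u)$ coincides with the first rank of the trace of $\beta$ (read in the direction $u \to v$). Otherwise $u$ reaches $\beta$ through a rib to $b_1$; by construction every rib joins a blocking-vertex representative of index $I$ to a triangle-vertex of rank $I$ (including the hopping rib case, where the extra subdivision $z$ has rank equal to that of its ancestor $\ancestor_s(j)$), so $\rank(u) = \rank(b_1)$ and the two collapse in the trace. The symmetric statement holds for $v$ and $b_m$. It follows that the trace of $Q$ is a contiguous subword of the trace of $\beta$ read in the direction of $P$. By Observation~\ref{obs:barrier-trace} this trace is either $B(i,i-1)$ or the reverse of $B(j,1)$ depending on the ranks of the endpoints of $\beta$, up to an overall reversal when $\pi(u) \succ \pi(v)$. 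All four possibilities are factors of $B(\ell)$: the non-reversed ones directly by Definition~\ref{def:ij-barrier}, and the reversed ones because $B(\ell)$ is a palindrome, which follows by a straightforward induction on its recursive definition (each of the three clauses of Definition~\ref{def:full-barrier} is symmetric in its inner calls). A contiguous subword of a factor of $B(\ell)$ is again a factor of $B(\ell)$, so we conclude.

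The main obstacle I anticipate is the second step: carefully excluding that $P$ can slip from one barrier-path to another without passing through a triangle-vertex. Once the local adjacency structure of the blow-up — and in particular the way the three representatives of a blocking-node interface with triangles through ribs — is spelled out, however, this is essentially a bookkeeping argument.
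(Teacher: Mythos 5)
Your proof is correct and takes essentially the same approach as the paper's: confine the segment of $P$ between two consecutive vertices of $\triangleseq(P)$ to a single barrier, and then read its trace off Observation~\ref{obs:barrier-trace}. You are in fact slightly more careful than the paper on one point, namely making explicit that $B(\ell)$ is a palindrome so that traversals of a barrier in either direction yield factors of $B(\ell)$.
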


\begin{proof}
    Let $K:=K^{\pi(u)}$. 
    Consider the first (non necessarily triangle-)vertex $u'$ not in $K$ that follows $u$ in $P$, and $e$ be the edge incident to $u'$ that precedes it in $P$.
    By construction, either $e$ lies in a barrier separating $K$ from its parent triangle, or it lies in a barrier separating $K$ from a child triangle, or it is a rib with a vertex of $K$ for top endpoint.
    
    If $e$ lies in a barrier, then either $P$ crosses the full barrier, in which case it reaches $v$ in the attained triangle,
    % (i.e., $\pi(v)$ is a child of $\pi(u)$) % non ça peut être un parent aussi
    or $P$ contains a rib attached to the barrier, in which case $v$ is the top endpoint of that rib.
    In the first case, by Observation~\ref{obs:barrier-trace}, the trace of $\subpath{P}{u}{v}$ is $B(i,j)$ with $i$ and $j$ being the respective ranks of $u$ and $v$, or of $v$ and $u$, depending on whether $\rank(u)>\rank(v)$ or $\rank(u)<\rank(v)$.
    Hence this trace is a factor of $B(\ell)$.
    In the other case, note that $\subpath{P}{u}{v}$ has visited a part of $B(i,j)$, starting from $\rank(u)$ to an occurrence of $\rank(v)$.
    Hence the trace of $\subpath{P}{u}{v}$ is a factor of $B(\ell)$ as desired.

    In the remaining case where $e$ is a rib, $u'$ lies in a barrier separating two triangle $K^s$ and $K^t$, whose trace is $B(i,j)$ for some $i,j\in \{1,\dots,\ell\}$.
    Then $P$ may either reach one of these two triangles, or reach another index in the barrier and take a corresponding rib.
    The same arguments as above shows that the trace of $\subpath{P}{u}{v}$ is a factor of $B(\ell)$ in these cases.
\end{proof}

% Notice that an induced path that visits a triangle vertex $u$ cannot contain a barrier in which appears a vertex $v$ such that $uv$ is a rib.
Notice that if an induced path visits a triangle vertex $u$ without following the ribs attached to it, then it will not contain the bottom endpoints of duch ribs.
Additionally, we will argue that if an induced path has intersected a rib on its bottom endpoint, then it will not be able to contain other ribs of that rank within the same zone.
We formalize the conditions that should be verified by $P$ in order to verify these properties.

Recall that given a vertex $v$ of $G$, $\ancestor_v$ maps each index $i$ to $V(K^s)$ where $s$ is the unique vertex of rank $i$ such that $v\in \zone(s)$.

The following notions are defined with respect to the path $P$ we fixed.
\begin{definition}\label{def:ghost}
    An index $i\in \{1,\dots,\ell\}$ is a \new{ghost rank} if $i>b^*$ for $b^*$ the largest rank of a triangle-vertex in $P$.
\end{definition}

\begin{definition}\label{def:burn}
    A triangle-vertex $v$ of $P$ \emph{burns}\mgnt{to burn} an index $i\in \intv{1}{\ell}$ if $i$ is a ghost rank, or if $\prefix{P}{v}$ contains the bottom endpoint of a rib of index $i$ incident to some $u\in \ancestor_v(i)$.
\end{definition}

Note that in Definition~\ref{def:burn} only one rib incident to some $u\in \ancestor_v(i)$ has to be intersected. We will argue that one such rib suffices to ``block'' our induced path in barriers.
The reason for considering ghost ranks in this definition is to unify coming statements and arguments. Note that bottom-endpoint of ribs of such ranks may be visited, but the ribs will not be used by $P$, as otherwise $P$ would contain a triangle-vertex of that rank, contrary to the definition. Hence they will play the same role as other ribs whose ranks are burned.

% \begin{definition}[Local vertex lock]
%     A triangle-vertex $v$ of $P$ is a \new{lock} if $\next(v)\in\zone(v)$.
% \end{definition}

% Note that 
% Note that by definition of $\next$ we actually have that a lock $u$ satisfies $\next(u)\in\zone(u)\setminus K^{\pi(u)}$.
% Hence we have the following.

% \begin{observation}\label{obs:lock-decrease-rank}
%     If $u$ is a lock of $P$ then $\rank(\next(u))<\rank(u)$.
% \end{observation}

% The choice of the terminology of \new{lock} will be justified by showing that, under some conditions, an induced path containing a lock $u$ will never ``escape'' its zone, i.e., $V(\suffix{P}{u})$ is contained in $\zone(u)$.
% To show this property, we will consider paths having intersected ribs of carefully chosen indices between locks, leading to the following definitions, which are at the core of our proof.

\begin{definition}[Index-lock]
    Let $v$ be a triangle-vertex of $P$ and $u$ be the latest vertex of $\triangleseq(P)$ before $v$ such that $v\in \zone(u)\setminus K^{u}$, if any.
    If such a vertex exists, the \new{index-lock} of $v$ is defined as $\rank(u)$, otherwise we set it to $\ell+1$.
\end{definition}

Note that the index-lock of the first vertex of $P$ is $\ell+1$; this artificial value is set to unify most of the remaining arguments.
We note the following which is trivial if no $u$ as in the definition above exists, since the index-lock is $\ell+1$ in that case, and which follows by construction using the fact that $v\in \zone(u)\setminus K^{u}$ otherwise.

\begin{observation}\label{obs:index-lock-rank-decreases}
    The index-lock of a triangle-vertex $v$ is larger than its rank.
\end{observation}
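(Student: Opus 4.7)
The plan is to split into two cases depending on whether the vertex $u$ invoked in the definition of the index-lock of $v$ exists. First, if no such $u$ exists, the index-lock is $\ell+1$ by definition, and since every rank lies in $\intv{1}{\ell}$ we immediately obtain $\rank(v) \leq \ell < \ell+1$.

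In the other case, the index-lock equals $\rank(u)$, and it remains to show $\rank(v) < \rank(u)$. The argument rests on three elementary facts. First, since $u$ and $v$ are both triangle-vertices, $\pi(u)$ and $\pi(v)$ are non-blocking-nodes of $\rt(\ct)$, hence nodes of $\st_\ell$. Second, the condition $v \notin K^u = K^{\pi(u)}$ combined with $v \in K^{\pi(v)}$ forces $\pi(v) \neq \pi(u)$. Third, $v \in \zone(u)$ unfolds, by the definition of the zone of a vertex in $G(\ct)$, to $\pi(v) \in \zone(\pi(u))$ inside $\rt(\ct)$. Since $\pi(v)$ is not a blocking-node, it belongs to the skeleton-tree part of $\zone(\pi(u))$, which by the inductive construction of Section~\ref{sec:skeleton-tree} is a copy of $\st_{\rank(\pi(u))}$ rooted at $\pi(u)$. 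In such a copy, the only node attaining the maximum rank is the root; every other node has strictly smaller rank. Combined with $\pi(v) \neq \pi(u)$, this yields $\rank(\pi(v)) < \rank(\pi(u))$, i.e., $\rank(v) < \rank(u)$, as desired.

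The only subtlety, which is minor, is that the extended zones in $\rt(\ct)$ do contain blocking-nodes whose ranks may exceed $\rank(\pi(u))$ (for instance, the internal indices of an index-barrier $B(i,i-1)$ are all at least $i+1$), so a blanket claim that ``every element of $\zone(u)\setminus K^u$ has rank smaller than $\rank(u)$'' would be false. The argument goes through precisely because $\pi(v)$, being the projection of a triangle-vertex, is not a blocking-node and therefore lives in the underlying copy of $\st_{\rank(\pi(u))}$. I do not expect any genuine obstacle beyond carefully unrolling the layered definitions of $\zone$ on $\st_\ell$, on $\rt(\ct)$, and on $G(\ct)$.
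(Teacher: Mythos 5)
Your proposal is correct and matches the paper's argument, which is stated in one sentence as being trivial when no $u$ exists (index-lock $\ell+1$) and otherwise following by construction from $v\in\zone(u)\setminus K^u$; you simply unroll that construction in detail. Your remark that blocking-nodes in the extended zone may have larger rank, but that $\pi(v)$ avoids them because $v$ is a triangle-vertex, correctly pins down the one subtlety the paper leaves implicit.
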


Accordingly, let us extend the notation $\zone_v(i)$, i.e., the function that maps $i$ to the zone of rank $i$ containing $v$, so that it now maps to $V(G)$ when $i=\ell+1$.
Similarly, $\ancestor_v$ now maps $\ell+1$ to the root clique of $G$.

Intuitively speaking, the index-lock $i$ of a triangle-vertex is the rank of the latest zone it has strictly entered to, this zone being the full graph if $i=\ell+1$.

We are now ready to introduce the notion which will be at the core of our characterization of $P$, and which connects with index-trees from Section~\ref{sec:index-tree}.

\begin{definition}\label{def:correct}
    A triangle-vertex $v$ of $P$ of rank $a$ and index-lock $b$ is \new{correct} if every $i\in \reach^+_b(a)$ is burned by $v$.
\end{definition}

Again, note that in this definition, it is not required that $v\in \zone(u)\setminus K^{u}$ for some $u$ before $v$ in $\triangleseq(P)$. 
If no such $u$ exists then the index-lock of $v$ is $\ell+1$, and so $\reach^+_{\ell+1}(a)=\{a,\dots,\ell\}$.
In particular, note that the first vertex of $P$ is necessarily correct as it is of largest rank, and hence each of $a,\dots,\ell$ is a ghost rank in that case.
See Figure~\ref{fig:correct} for examples of correct vertices. 

\begin{figure}[htb]
\begin{minipage}[b]{0.48\textwidth}
\hspace{30pt}
\includegraphics[page=1, scale=1]{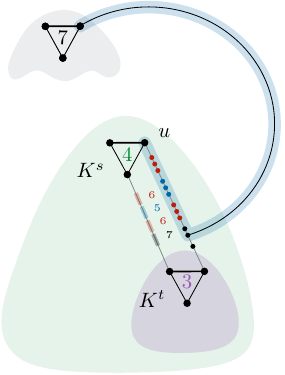}
\end{minipage}
\hspace{20pt}
\begin{minipage}[b]{0.48\textwidth}
\includegraphics[page=2, scale=1]{figures/correct.pdf}
\end{minipage}
\caption{Three possible situations for a vertex $u$ to be correct when its rank is $4$ (left) and $3$ (right); in this example it is assumed that $\ct$ is a complete binary tree as in Figure~\ref{fig:index-tree}. In red and blue are represented different possibilities for $\prefix{P}{u}$. 
In blue, the index-lock of $u$ is $7$:
note that $\prefix{P}{u}$ intersects every rib of rank in $\reach^+_7(4)=\{5,6\}$ (left), or every rib of rank in $\reach^+_7(3)=\emptyset$ (right).
In red, the index-lock of $u$ is $6$, and $7$ is a ghost rank: it is the only index in $\reach^+_6(3)$.}
\label{fig:correct}
\end{figure}

The key step of our proof is showing that every triangle-vertex of $P$ is correct, which we use to characterize the way $P$ behaves between consecutive vertices in $\triangleseq(P)$.
For this, we will rely on the following property which expresses a notion of ``confinement'' for vertices following a correct triangle-vertex.

\needspace{2cm}

\begin{proposition}\label{prop:correct-forbidden-indices-property}
    Let $u$ be a correct triangle-vertex of $P$ of rank $a$, and $b$ be its index-lock.
    Then the following two assertions hold: 
    \begin{itemize}
        \item $\suffix{P}{u}$ does not contain a vertex of rank $b$ in $\zone_u(b)$; and
        \item $\suffix{P}{u}$ does not contain a triangle-vertex of rank $i\in \reach^+_b(a)$ in $\zone_u(b)$, and in particular not a rib of that rank contained in $\zone_u(b)$.
    \end{itemize}
\end{proposition}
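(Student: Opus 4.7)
The plan is to prove both statements by considering a first vertex of $\suffix{P}{u}$ violating one of them and exhibiting a chord in $P$. The case $b=\ell+1$ reduces to an easy verification: no vertex has rank $\ell+1$, and although $\reach^+_{\ell+1}(a)=\{a+1,\dots,\ell\}$ may contain non-ghost indices, they are still burned by $u$ via existing ribs in $\prefix{P}{u}$, so the argument below applies with $\zone_u(b)=V(G)$. Assume therefore $b\leq \ell$ and let $u'\in \triangleseq(P)$ be the witness to the index-lock of $u$, so $\rank(u')=b$ and $\pi(u')=\ancestor_u(b)$.

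\textbf{Part 1.} The triangle $K^{\pi(u')}$ is the unique triangle of rank $b$ in $\zone_u(b)$. Since $u'\in K^{\pi(u')}$ already lies in $P$ and $u\notin K^{\pi(u')}$ lies in $P$ strictly after $u'$, any further visit of $K^{\pi(u')}$ in $\suffix{P}{u}$ would place two non-consecutive vertices of the clique $K^{\pi(u')}$ in $P$, producing a chord; this rules out triangle-vertices of rank $b$ in $\zone_u(b)$. A blocking-vertex of rank $b$ in $\zone_u(b)$ lies on a barrier whose ribs climb to $K^{\pi(u')}$. If $\suffix{P}{u}$ reached such a blocking-vertex, one uses Proposition~\ref{prop:index-trace-factor} together with Observation~\ref{obs:barrier-trace} to show that the path would have to leave the barrier either through one of these ribs (forbidden by the previous point) or through a triangle endpoint of the barrier, from which a backward trace would again force a return to $K^{\pi(u')}$.

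\textbf{Part 2.} Let $w\in \suffix{P}{u}$ be the first triangle-vertex of some rank $i\in \reach^+_b(a)$ in $\zone_u(b)$, if any. Since $w\in P$, we have $i\leq b^*$ and $i$ is not a ghost rank, so by correctness of $u$ there is a representative $y\in \prefix{P}{u}$ of rank $i$ whose rib goes to some $z\in \ancestor_u(i)$. Splitting $\subpath{P}{u}{w}$ along $\triangleseq(P)$ and applying Proposition~\ref{prop:index-trace-factor} to each consecutive pair yields traces that are factors of $B(\ell)$; a combined application of Proposition~\ref{prop:factors-bounded-by-b-are-equal} (characterizing factors between two occurrences of a common rank) and Proposition~\ref{prop:same-subtree} (restricting the $\reach_b$-class of $a$ and $i$) forces $\subpath{P}{u}{w}$ to cross a representative of rank $i$ whose rib attaches to $\ancestor_u(i)$. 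Such a crossing creates a chord in $P$, either with $y$ through its barrier-path neighbors, or with a vertex of $\ancestor_u(i)$ that $P$ reached earlier via $y$'s rib. The auxiliary statement about ribs of rank $i$ in $\zone_u(b)$ follows immediately: the top endpoint of such a rib is itself a triangle-vertex of rank $i$ in $\zone_u(b)$.

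The main obstacle is Part~2, and specifically the case analysis of how $\suffix{P}{u}$ could navigate across several nested barriers in $\zone_u(b)$ before reaching a new rank-$i$ triangle. Bookkeeping which ranks have been burned, and exploiting the equivalence-class structure of $\reach_b$ to severely restrict the set of indices that can appear in the traces of $\subpath{P}{u}{w}$, should be the key technical ingredients that reduce every putative escape route to a collision with $y$ or $\ancestor_u(i)$.
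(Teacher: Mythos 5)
Your Part~1 is on the right track for triangle-vertices of rank $b$ (same as the paper: $\suffix{P}{u}$ cannot re-enter the root triangle $K$ of $\zone_u(b)$ without creating a chord with the vertex of $K$ already on $P$), but your treatment of blocking-vertices of rank $b$ substitutes a mechanism that does not work for the real one. If $\suffix{P}{u}$ reaches a blocking-vertex $v$ of rank $b$, nothing forces the path to ``leave the barrier through a triangle endpoint'' and there is no ``backward trace forcing a return to $K^{\pi(u')}$'': a priori $P$ could simply continue along the barrier-path through $v$. What actually kills this is degree counting plus the blow-up structure: the rib at $v$ cannot be an edge of $P$ (chord with the vertex $w$ of $K$ on $P$), and since the endpoints of $P$ are triangle-vertices, $v$ is internal to $P$, so $P$ must contain all three representatives $x_1,x_2,x_3$ of that occurrence of $b$; their ribs $e_1,e_2,e_3$ hit the three \emph{distinct} vertices of $K$, so one of them is adjacent to $w$ --- a chord. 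This fact, that the three representatives of an occurrence dominate the entire ancestor triangle, is the engine of both assertions and is absent from your write-up.

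Part~2 has a genuine gap. The paper's argument is short and uses no traces, no Proposition~\ref{prop:factors-bounded-by-b-are-equal} and no Proposition~\ref{prop:same-subtree}: since $u$ is correct and $i$ is not a ghost rank, $\prefix{P}{u}$ already contains the bottom endpoint $y$ of a rib of rank $i$ attached to $\ancestor_u(i)$; by the same dichotomy as above, $\prefix{P}{u}$ either contains that rib as an edge (hence a vertex of $\ancestor_u(i)$) or contains all three representatives of that occurrence of $i$, and in both cases every vertex of $\ancestor_u(i)$ is in, or adjacent to, $\prefix{P}{u}$. The later triangle-vertex of rank $i$ then yields a chord immediately, with no analysis of how the path got there. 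Your route --- forcing $\subpath{P}{u}{w}$ to ``cross a representative of rank $i$ whose rib attaches to $\ancestor_u(i)$'' via traces --- is both unnecessary and unfinished: you acknowledge that the required case analysis ``should be'' doable, which is precisely the missing content, and it is not clear it can be completed, since nothing forces the subpath from $u$ to $w$ to pass through another blocking-vertex of rank $i$ at all. Moreover the chord you need is between the new rank-$i$ triangle-vertex and a representative already sitting in $\prefix{P}{u}$, not ``with $y$ through its barrier-path neighbors''.
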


\begin{proof}
    If $b=\ell+1$ then the first assertion trivially holds as there are no vertices of that rank in $G$.
    Let us thus prove the first assertion assuming $b\leq \ell$, i.e., there is a triangle-vertex of rank $b$ before $u$ in $\triangleseq(P)$ with $u$ in its zone. 
    By construction, the triangle $K$ at the root of this zone $\zone_u(b)$ sends ribs toward each blocking-vertex of rank $b$ in this zone.
    Moreover, since $b>a$ and so $u\not\in V(K)$, we note that for any vertex $v$ in $\prefix{P}{u}$, we have that $\subpath{P}{v}{u}$ contains vertices that are not in $K$.
    Hence $\suffix{P}{u}$ does not intersect $K$, as otherwise we would get a chord, which is excluded.
    
    Suppose toward a contradiction that $\suffix{P}{u}$ contains a vertex $v$ of rank $b$ in $\zone_u(b)$.
    Let $w$ be the vertex of $K$ that is contained in $P$.
    By the above discussion, $v$ does not lie in $K$.
    So $v$ must be the bottom endpoint of a rib of rank~$b$.
    Note that this rib cannot be an edge of $P$ as we would again induce a chord with $w$.
    Now since the endpoints of $P$ are triangle-vertices, we derive that $v$ must have degree two in $P$. 
    We conclude that $P$ must contain all the representatives of the occurrence of $b$, i.e., the three vertices $x_1,x_2,x_3$ that were created during the blow-up operation to represent this occurrence of the index $b$ in the barrier; see Section~\ref{sec:blow-up}.
    However one of these vertices will induce a chord with $w$, a contradiction.
    This concludes the proof of the first assertion.

    Let us now assume toward a contradiction that $P$ contains a triangle-vertex $v$ of rank $i\in \reach^+_b(a)$ contained in $\zone_u(b)$, with possibly $b=\ell+1$ (in which case $\zone_u(b)=V(G)$).
    As $u$ is correct, every such $i$ is burned, i.e., either $i$ is a ghost rank, or $\prefix{P}{u}$ contains the endpoint of a rib of index $i$ incident to some $w\in \ancestor_u(i)$.
    Clearly, $i$ is not a ghost rank as $v$ belongs to $P$.
    Thus the second property holds.
    Note that $\prefix{P}{u}$ either contains such a rib of rank $i$, or, using again the fact that the endpoints of $P$ are triangle-vertices, it contains the three representatives of the occurrence of $i$ in the barrier.
    Both possibilities induced a chord with $v$, a contradiction.
    This proves the second assertion and hence the proposition.
\end{proof}

As already mentioned in the proof of Proposition~\ref{prop:index-trace-factor}, note that after visiting a triangle-vertex $u$, by construction, the path $P$ has several options we review here.
Recall that hopping ribs are those additional ribs added between triangles of rank $1$ and $j\geq 1$; see Figures~\ref{fig:node-barrier} and \ref{fig:blowup}.
After visiting a triangle-vertex $u$, either:
\begin{itemize}
    \item $P$ visits a second triangle-vertex within the same triangle, in which case we say that $u$ is \new{staying};
    \item $P$ follows an edge from a barrier-path separating $K^{u}$ from its parent triangle, in which case we say that $u$ is \new{ascending};
    \item $P$ follows an edge from a barrier-path separating $K^{u}$ from a child triangle, in which case we say that $u$ is \new{descending};
    \item $P$ follows a rib $e$ incident to $u$, in which case we say that $u$ is \new{jumping}.
\end{itemize}
Moreover, when $u$ is ascending toward a triangle of rank $1$ it could be that $P$ contains a hopping rib of the same rank as $u$, whose group of representatives are encountered first in the barrier, in which case we say that $u$ is \emph{hopping up}\mgnt{hopping up/down}.
Similarly, when $u$ is jumping, it could be that the used rib $e$ is a hopping rib, in which case we say that $u$ is \emph{hopping down}.
We distinguish these two cases as they result into $P$ visiting two distinct triangles of a same rank, which, while irrelevant as far as the asymptotic order of a path is concerned, needs extra care in the coming statements.

An example of a triangle-vertex vertex $u$ that is ascending is given in Figure~\ref{fig:case-1} (right); one that is descending is given in Figure~\ref{fig:case-2} (right), and one that is hopping down is given in Figure~\ref{fig:case-3b-ii-iii} (left). 

To this terminology, we add one more notion, saying that a vertex $u$ is \new{locking} if $\next(u)\in\zone(u)$.
Note that a vertex $u$ may be locking while being descending or jumping, but not while being ascending, nor hopping up or down.
Moreover, we have the following by construction.

\begin{observation}\label{obs:lock-next-rank-smaller}
    If $v$ is locking then $\rank(\next(v))<\rank(v)$.
\end{observation}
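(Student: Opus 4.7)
The plan is to unfold the definitions and exploit the hierarchical structure of the skeleton-tree~$\st_\ell$. Recall that $v$ is locking when $\next(v)\in\zone(v)$, which by the definition of $\zone$ for triangle-vertices translates to $\pi(\next(v))\in\zone(\pi(v))$ at the level of $\st_\ell$. Since $\next(v)$ is a triangle-vertex by definition, $\pi(\next(v))$ is a genuine node of $\st_\ell$ (not a blocking-node introduced in $\rt(\ct)$), so this containment is in the original skeleton-tree sense.

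The key fact I would invoke, immediate from the inductive construction in Section~\ref{sec:skeleton-tree}, is that $\zone(\pi(v))$ is (a copy of) $\st_{\rank(v)}$ rooted at~$\pi(v)$. Consequently, $\pi(v)$ is the \emph{unique} node of $\zone(\pi(v))$ whose rank equals $\rank(v)$: every other node in that zone has strictly smaller rank, because the ranks appearing in an $\st_k$ are the integers in $\intv{1}{k}$ and only its root attains the maximum. This immediately yields $\rank(\next(v))\leq \rank(v)$.

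To obtain the strict inequality, I would argue by contradiction. If $\rank(\next(v))=\rank(v)$, then the uniqueness above forces $\pi(\next(v))=\pi(v)$, i.e., $\next(v)\in K^{\pi(v)}$. But this contradicts the very definition of $\next(v)$ as the first triangle-vertex after~$v$ that does \emph{not} belong to the same triangle as~$v$. The proof is thus essentially a definition-chase and no serious obstacle is expected; the only point requiring a moment of care is confirming that $\pi(\next(v))$ is a node of $\st_\ell$ rather than a blocking-node of $\rt(\ct)$, which is ensured by $\next(v)$ being a triangle-vertex.
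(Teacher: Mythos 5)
Your proof is correct and matches the paper's intent: the paper states this observation with no argument beyond ``by construction,'' and your definition-chase (the zone of $\pi(v)$ is a copy of $\st_{\rank(v)}$ whose unique node of maximal rank is $\pi(v)$ itself, while $\next(v)$ is a triangle-vertex outside $K^{\pi(v)}$) is precisely the argument that phrase stands in for. Your closing caveat is also well placed, since the extended zones do contain blocking-nodes of rank larger than $\rank(v)$, and the observation would fail without the restriction to triangle-vertices.
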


The first of our two key lemmas is the following, which consists of a case study of the different behaviors of our path.

\begin{lemma}\label{lem:next-is-correct}
    Let $u$ be a correct triangle-vertex of $P$ of rank $a$, and $b$ be its index-lock.
    Let us further assume that $v:=\next(u)$ exists, and let $c$ be its rank. 
    Then $v$ lies in $\zone_v(b)$ and is correct.
    
    \medskip % Esperet style yeah
    
    \noindent{}Moreover, either $a=c<b$, in which case exactly one of the following holds:
    \begin{enumerate}[label=(\alph*)]
        \item\label{it:hoppingup} $u$ is hopping up, $\trace(\subpath{P}{u}{v})=\{a\}$, and $v$ is not hopping up;
        \item\label{it:hoppingdown} $u$ is hopping down, $\trace(\subpath{P}{u}{v})=\{a\}$, and $v$ is not hopping up nor down;
    \end{enumerate}
    or $c \reach_b a$, in which case exactly one of the following holds:
    \begin{enumerate}[label=(\alph*), resume]
        \item\label{it:ascending} $u$ is ascending, $c<a<b$, and $\trace(\subpath{P}{u}{v})=B(a,c)$;
        \item\label{it:descending} $u$ is either descending or jumping, $a<c<b$, and $\trace(\subpath{P}{u}{v})$ is the reverse of $B(c,a)$; moreover, $v$ can neither be ascending, nor hopping up or down;  
        \item\label{it:lock} $u$ is locking, $c<a<b$, and $\trace(\subpath{P}{u}{v})$ is a factor of $B(\ell)$ from an occurrence of $a$ to an occurrence of $c$.% not containing any other occurrence of $a$ and $c$. % c'est pas vrai
    \end{enumerate}
    % Finally, the index-lock of $v$ is $b$ except if $u$ is locking, in which case it is $a$.
\end{lemma}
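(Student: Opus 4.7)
The plan is to proceed by case analysis on how $P$ exits $K^u$ after visiting $u$: either $P$ stays inside $K^u$ and then leaves, or it directly enters an ascending barrier, a descending barrier, or a rib incident to $u$. In every scenario, Proposition~\ref{prop:index-trace-factor} guarantees that $\trace(\subpath{P}{u}{v})$ is a factor of $B(\ell)$, and Observation~\ref{obs:barrier-trace} pins down exactly which factor: an ascending barrier contributes a factor of $B(a,a-1)$ (or of the reverse of $B(a,1)$ when the target has rank~$1$), while a descending barrier or a jumping rib yields (a prefix of) the reverse of $B(a+1,a)$ or a similar rank-$1$ analogue. Matching these alternatives to the stated cases (a)--(e) will be the backbone of the argument.

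I will then assign each configuration to its case. The hopping variants are (a) and (b), forced by the use of a hopping rib and giving $c=a$; the constraints that $v$ is not itself hopping will be established by a chord-avoidance argument, since hopping again would either revisit the same barrier or force an already-used triangle to meet $P$ twice. Case (c) corresponds to an ascending barrier traversed to its endpoint, yielding the trace $B(a,c)$ with $c$ the rank of that endpoint. Case (d) gathers descending-to-endpoint and jumping configurations, where $\pi(v)$ is a strict tree-ancestor of $\pi(u)$ and the trace is (a prefix of) the reverse of $B(c,a)$ with $c>a$. Case (e) is the locking case where $\next(u)$ falls strictly inside $\zone(u)$, which can only occur if $u$ descended or jumped into a barrier within its own zone.

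Showing $v\in \zone_v(b)$ will follow from Proposition~\ref{prop:correct-forbidden-indices-property} applied to $u$: $\suffix{P}{u}$ contains no rank-$b$ vertex inside $\zone_u(b)$, so $P$ cannot cross the rank-$b$ boundary separating $\zone_u(b)$ from the rest of $G$ (such a crossing would necessarily visit a rank-$b$ vertex of $\zone_u(b)$). This pins $v$ inside $\zone_u(b)$, and since the case analysis also yields $\rank(v)<b$, we conclude $\zone_v(b)=\zone_u(b)$ by Observation~\ref{obs:index-lock-rank-decreases}.

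The main obstacle is verifying that $v$ is correct. In cases (a)--(d) the index-lock of $v$ remains $b$, so I need to show that every $i\in\reach^+_b(c)\setminus\reach^+_b(a)$ is burned by $v$: any such $i$ lies, by Propositions~\ref{prop:same-subtree} and \ref{prop:smaller-index-incomparable-right-child}, in the same distinguished subtree of $\ct$ as $c$ relative to $b$, and by Proposition~\ref{prop:factors-bounded-by-b-are-equal} the trace of $\subpath{P}{u}{v}$ is forced to coincide with a uniquely determined factor of $B(\ell)$ containing $i$. Consequently the blocking-vertex representing $i$ is traversed and its bottom rib-endpoint crossed, burning $i$. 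In case (e) the index-lock of $v$ drops to $a$, and the same reasoning delivers the required burning of $\reach^+_a(c)$. The trickiest point will be aligning the trace produced by the case analysis with the factor predicted by the reach relation, and carefully excluding, via chord-avoidance, the secondary behaviours (such as $v$ being ascending or hopping) forbidden in cases (b) and (d).
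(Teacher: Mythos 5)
Your overall strategy --- an exhaustive case analysis on whether $u$ is staying, ascending, descending or jumping, identification of $\trace(\subpath{P}{u}{v})$ via Observation~\ref{obs:barrier-trace}, chord-avoidance for the hopping exclusions, and correctness of $v$ via burned indices plus transitivity of $\reach_b$ --- is the same as the paper's. There is, however, a genuine gap in how you match configurations to cases (a)--(e). You use Proposition~\ref{prop:correct-forbidden-indices-property} only to prove $v\in\zone_v(b)$ and the correctness of $v$, whereas it is also what \emph{forces} the path into the claimed cases. Concretely: a priori $u$ could ascend into the barrier separating $K^u$ from a parent triangle of rank $a+1$; if $P$ reached that triangle we would have $c=a+1>a$ with $u$ ascending, which fits none of (a)--(e) (case (c) requires $c<a$). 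The paper excludes this by combining the correctness of $u$ with Observation~\ref{obs:i-j-intermediate}: all internal indices of that barrier exceed $a$, so no rib can be used, $P$ must reach the parent, and then $a+1\in\reach^+_b(a)$ is burned --- contradicting Proposition~\ref{prop:correct-forbidden-indices-property}. A similar use of Observation~\ref{obs:i-iminusone-intermediate} is needed to show that when $u$ descends through a $B(a,a-1)$ barrier ($a\geq 2$) no rib can be taken, so that $u$ is necessarily locking (case (e)). Your sketch contains neither exclusion, and without them the ``exactly one of'' structure of the statement is not established.

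A second, related problem is that you have the trace identities of ascending and descending swapped: descending from rank $a\geq 2$ to a child follows $B(a,a-1)$ in order, ascending to a rank-$(a+1)$ parent follows the reverse of $B(a+1,a)$, and ascending to a rank-$1$ parent follows $B(a,1)$ (not its reverse). With the directions as you state them the impossibility argument above cannot even be set up. Finally, Proposition~\ref{prop:factors-bounded-by-b-are-equal} is not the right tool for showing that the newly reachable indices are burned: it concerns factors delimited by two occurrences of $b$, not the trace from $a$ to $c$. What is actually needed is that $B(a,c)$ contains every index between $a$ and $c$ (Observation~\ref{obs:i-j-intermediate}) in the non-locking cases, and Proposition~\ref{prop:from-parent-of-root-to-i-consecutive} together with Proposition~\ref{prop:same-subtree} in the locking case, where the lock drops from $b$ to $a$ and every index of $\reach^+_a(c)$ must be shown to be visited.
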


\begin{proof}
    Let $K$ be the triangle in which $u$ belongs.
    Note that $u$ may be staying, ascending, descending, or jumping.
    Let us assume that it is not staying, as otherwise we can replace $u$ by the next vertex in $K$ (at most once) and conduct the same analysis.
    We will distinguish the remaining cases in the rest of the proof.
    For each case, we shall show $v\in \zone_v(b)$, and use Proposition~\ref{prop:correct-forbidden-indices-property} to show that $\prefix{P}{v}$ has visited blocking-vertices of each of the indices required to be burned for $v$ to be correct.
    Moreover, along the way, we show that $\subpath{P}{u}{v}$ is of one of the five types as described in the statement.

    In the rest of the proof, we will focus on the way $P$ intersects right barriers, and more particularly their blocking-vertices.
    We note however that left and right barriers make no distinction in the analysis.
    Moreover, by Proposition~\ref{prop:index-trace-factor}, we know that the traces of these intersections define factors of the corresponding index-barriers. 
    We shall use this observation implicitly, to show that the ranks ``reachable'' by $P$ in these barriers are those in $B(\ell)$.

    Finally, note that the different cases described in the statement are mutually exclusive: this is clear between Cases~\ref{it:hoppingup}, \ref{it:hoppingdown}, and \ref{it:descending}, as well as between Cases~\ref{it:ascending} and \ref{it:lock}, by their definition, 
    and it follows by the stated conditions on the ranks $a,c$ for the other comparabilities.
    Hence we shall only argue which case is fulfilled in the remaining of the proof.
    
    \medskip
    
    \needspace{1cm}\casestylei{Case 1:} $u$ is ascending; see Figure~\ref{fig:case-1}.
    
    Let $K'$ be the parent triangle of $K$.
    Note that since $b>a$ and the rank of $K'$ is one more than the rank of $K$, we have that the vertices in $K$, $K'$, and those contained in the barrier separating these triangles, all lie in $\zone_u(b)$.
    Hence Proposition~\ref{prop:correct-forbidden-indices-property} applies to these vertices, and we derive that $P$ cannot reach a vertex of rank $b$ in the barrier, nor use an attached rib of rank in $\reach^+_b(a)$.
    
    By construction, two cases arise depending on the rank of $K'$.

    \medskip
    
    \needspace{1cm}\casestyleii{Case 1.a:} the rank of $K'$ is $a+1$; see Figure~\ref{fig:case-1} (left).
    
    Then the trace of the barrier from $K$ to $K'$ is the reverse of $B(a+1,a)$.
    By Observation~\ref{obs:i-j-intermediate} the internal indices in this trace are all greater than $a$.
    Thus $P$ cannot use any rib attached to the barrier.
    So $P$ must reach $K'$ and we derive $a+1\in \reach_b^+(a)$.
    This contradicts Proposition~\ref{prop:correct-forbidden-indices-property} and we conclude that this case cannot arise.

    \begin{figure}[htb]
        \begin{minipage}[b]{0.48\textwidth}
            \includegraphics[scale=1,page=1]{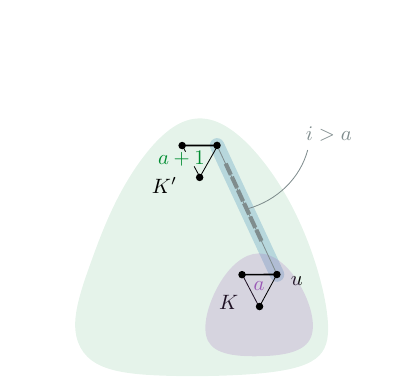}
        \end{minipage}
        \hspace{-40pt}
        \begin{minipage}[b]{0.48\textwidth}
            \includegraphics[scale=1,page=1]{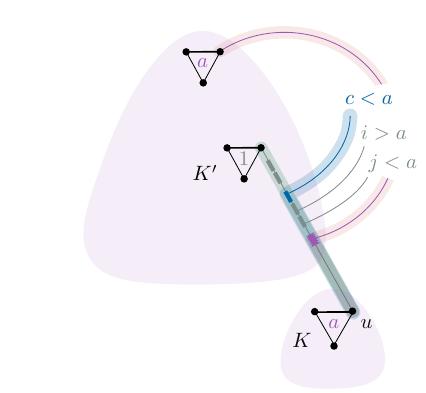}
        \end{minipage}
        \caption{The situations of Case 1.a leading to a contradiction (left) and of Cases 1.b.i (right, in green and blue) and 1.b.ii (right, in red).}
        \label{fig:case-1}
    \end{figure}

    \medskip

    \needspace{1cm}\casestyleii{Case 1.b:} the rank of $K'$ is $1$; see Figure~\ref{fig:case-1} (right).
    
    Then the trace of the barrier from $K$ to $K'$ is $B(a,1)$.
    Thus, either $P$ reaches $K'$, in which case $b$ does not appear in $B(a,1)$, or it contains a rib of rank in $\reach^-_b(a)$, or it contains a rib of rank $a$.

    \medskip

    \needspace{1cm}\casestyleiii{Case 1.b.i:} $P$ reaches $K'$, i.e., $c=1$, or contains a rib of rank $c\in \reach^-_b(a)$.
    
    Note that since $a,c<b$, $v\in \zone_u(b)$.
    Also, $c$ is the first occurrence of its index in the trace of $\subpath{P}{u}{v}$: this is by definition of the barrier if $c=1$, and in the other case, if $P$ has crossed another occurrence of $c$ before, then one of its attached rib would induce a chord with $v$, a contradiction.
    So this trace is a prefix of $B(a,1)$ from $a$ to the first occurrence of $c$.
    Hence it is equal to $B(a,c)$ by definition.
    By Observation~\ref{obs:i-j-intermediate}, this trace contains all indices from $a$ to $c$. 
    Note that $a$ appears as well as the rank of a blocking-vertex in the barrier because of the hopping rib of rank $a$.
    So each of these indices in burned by $v$.
    As $a,c<b$, by the transitivity of $\reach_b$, we have $\reach_b(c)=\reach_b(a)$.
    Moreover every $i\in\reach^+_b(a)$ is burned by $u$. 
    So every $i\in\reach^+_b(c)$ is burned by $v$ and we conclude that $v$ is correct.
    Note that this situation satisfies the conditions of Case~\ref{it:ascending}.

    \medskip
    
    \needspace{1cm}\casestyleiii{Case 1.b.ii:}
    $P$ contains a rib of rank $a$.
    
    Note that this rib must be a hopping rib as otherwise $P$ induces a chord with a hopping rib of the barrier. So $a=c<b$ and $\trace(\subpath{P}{u}{v})=\{a\}$. 
    Moreover, $v$ may not be hopping up once more, by construction.
    Hence this situation satisfies the conditions of Case~\ref{it:hoppingup}.

    \medskip

    \needspace{1cm}\casestylei{Case 2:}
    $u$ is descending; see Figure~\ref{fig:case-2}.
    
    By construction, the vertices contained in this barrier, as well as those in $K$, lie in $\zone_u(b)$; note that we do not (yet) assume that the same holds for the vertices in $K'$.
    % This is clear if $K$ and $K'$ are of consecutive rank since $a\leq b$.
    % Suppose that this is not the case when $K$ has rank $1$. 
    % Then $\zone(u)=K$ and $\zone(u)\setminus K^u=\emptyset$, a contradiction to the definition of $v$.
    Hence the conditions of Proposition~\ref{prop:correct-forbidden-indices-property} are satisfied for these vertices, and we derive that $P$ cannot reach a vertex of rank $b$ in the barrier, nor use an attached rib of rank in $\reach^+_b(a)$.  
    By construction, two cases arise depending on the value of $a$, i.e., the rank of $u$.

    \begin{figure}[htb]
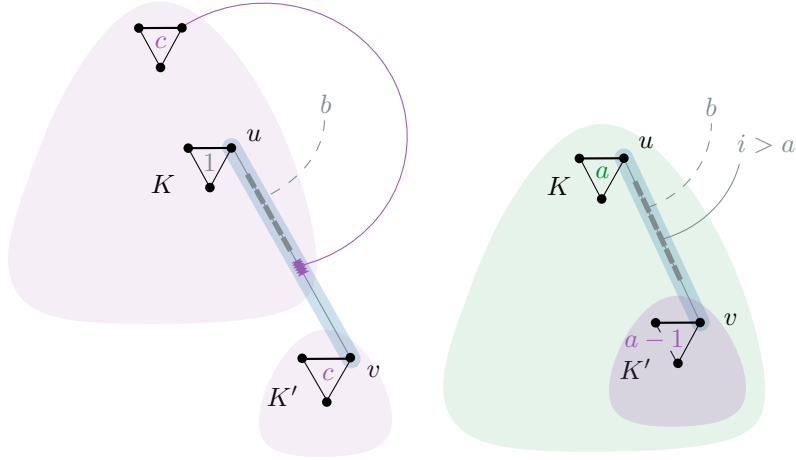

        \hspace{-15pt}
        \begin{minipage}[b]{0.48\textwidth}
            \includegraphics[scale=1,page=2]{figures/cases-i1.pdf}
        \end{minipage}
        \hspace{-10pt}
        \begin{minipage}[b]{0.48\textwidth}
            \includegraphics[scale=1,page=2]{figures/cases-consecutive.pdf}
        \end{minipage}
        \caption{The situations of Case 2.a (left) and Case 2.b (right) where $P$ (in blue) is forced to reach the child triangle $K'$.}
        \label{fig:case-2}
    \end{figure}

    \medskip
    
    \needspace{1cm}\casestyleii{Case 2.a:} ${a=1}$; Figure~\ref{fig:case-2} (left).
    
    Then the trace of the barrier from $u$ to $v$ is the reverse of $B(c,1)$.
    By Observation~\ref{obs:i-j-intermediate} this barrier contains indices from $c$ to $1$, and blocking vertices of rank $2$ to $c$, the later being included because of the hopping rib.
    Note that these indices are greater than $1$.
    Thus no rib can be used by $P$ while crossing this barrier.
    So $v$ must belong to $K'$, and we deduce that $b$ does not appear in the barrier.
    So $b>c$ and $v$ lies in $\zone_u(b)$.
    Since $b>c$ and $b>1$, by the transitivity of $\reach_b$, we have $\reach_b(1)=\reach_b(c)$.
    Now since all $i\in\reach^+_b(1)$ have been burned by $u$, we trivially get that every $i\in\reach^+_b(c)$ is burned by $v$ as well.
    So $v$ is correct.
    
    Let us argue that $v$ may not be ascending in that case.
    Note that since $P$ has crossed the right barrier separating $K$ and $K'$, the only way for $v$ to be ascending is for $P$ to now cross the left barrier. 
    But by the above arguments, no rib can be used by $P$ while crossing this barrier, and if $P$ reaches $K$ then it would induce a chord, a contradiction.
    
    Similarly, the fact that $v$ is not hopping follows by the fact the concerned hopping ribs of rank $c$ have been intersected by $P$ while crossing the barrier.
    
    Hence $u$ satisfies the conditions of Case~\ref{it:descending}.

    \medskip
    
    \needspace{1cm}\casestyleii{Case 2.b:} $a\geq 2$; see Figure~\ref{fig:case-2} (right).
    
    The trace of the barrier is ${B(a,a-1)}$.
    By Observation~\ref{obs:i-iminusone-intermediate}, its blocking-vertices are of rank greater than $a$.
    Hence the indices visited by $P$ belong to $\reach^+_b(a)$. 
    So no rib can be used by $P$ while crossing this barrier, and hence $v$ must belong to $K'$.
    Thus $v\in\zone_u(b)$.
    Since $v\in \zone(u)$, $u$ is locking and $a$ is the index-lock of $v$.

    Let us show that every index in $\reach^+_a(a-1)$ is visited by $P$; we stress the fact that we are now dealing with $a$ as a blocking index, and not $b$.
    By Proposition~\ref{prop:same-subtree}, $a-1$ either lies in $\ct(a^-)$, or in $\ct(k^+)$ for some ancestor $k$ of $a$ with possibly $k=a$.
    Moreover by Proposition~\ref{prop:from-parent-of-root-to-i-consecutive}, all indices in $\reach_a^+(a-1)$ are visited, as the trace of $\subpath{P}{u}{v}$ contains such an ancestor by Observation~\ref{obs:factors-contain-sp}.
    Hence $v$ is correct. 
    
    The trace of $\subpath{P}{u}{v}$ is equal to ${B(a,a-1)}$, hence is a factor of $B(\ell)$.
    Since $u$ is locking the conditions of Case~\ref{it:lock} are fulfilled.
       
    \medskip

    \needspace{1cm}\casestylei{Case 3:} $u$ is jumping; see Figures~\ref{fig:case-3a}--\ref{fig:case-3b-ii-iii}.
    
    Note that $u$ is the top endpoint of a rib having its bottom endpoint in a barrier separating two triangles.
    Let $K^s$ and $K^t$ be these triangles corresponding to two adjacent nodes $s,t$ of respective rank $i,j$ with $s$ is the parent of $t$.
    Note that the trace of the barrier from $s$ to $t$ is either $B(i,j)$ if $i\geq 2$, in which case $j=i-1$, or the reverse of $B(j,1)$, if $i=1$.

    \begin{figure}[htb]
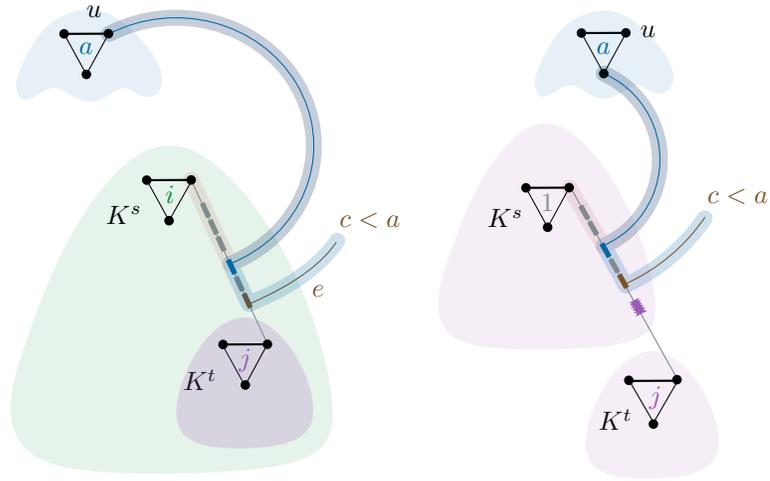

        \hspace{10pt}
        \begin{minipage}[b]{0.48\textwidth}
            \includegraphics[scale=1,page=3]{figures/cases-consecutive.pdf}
        \end{minipage}
        \hspace{-60pt}
        \begin{minipage}[b]{0.48\textwidth}
            \includegraphics[scale=1,page=3]{figures/cases-i1.pdf}
        \end{minipage}
      \caption{The situations of Case 3.a when $i=j+1$ (left, in red if $P$ reaches $K^s$, in blue otherwise) and $i=1$ (right, in red if $P$ reaches $K^s$, in blue otherwise).}
      \label{fig:case-3a}
    \end{figure}
    
    By construction, since a rib incident to $u$ intersects this barrier, the vertices in $K^s$ and those in the barrier belong to $\zone(u)$, hence to $\zone_u(b)$.
    Thus the conditions of Proposition~\ref{prop:correct-forbidden-indices-property} are satisfied for these vertices, and we derive that $P$ cannot reach a vertex of rank $b$ in the barrier, nor reach a triangle or use an attached rib of rank in $\reach^+_b(a)$.
    It can neither use a rib of rank $a$ as taking these ribs would induce a chord with $u$.
    Thus $P$ can either reach $K^s$, use a rib in the barrier, or reach $K^t$, if their corresponding rank $c$ lies in $\reach^-_b(a)$.
    We distinguish these cases.

    \medskip

    \needspace{1cm}\casestyleii{Case 3.a:} $P$ either contains a rib or reaches $K^s$; see Figure~\ref{fig:case-3a}.
    
    The coming arguments hold in these two situations regardless of whether  $i=1$ or $i\geq 2$.
    As the vertices of $K^s$ and those in the barrier belong to $\zone(u)$, we have $c<a$ and $v\in\zone_b(u)$.
    Since $c<a$, we have $v=\next(u)\in \zone(u)$ and so $u$ is locking.
    % Note that in addition to being a factor of $B(\ell)$, the trace of $\subpath{P}{u}{v}$ contains a single occurrence of $a$ and $c$, as otherwise $\subpath{P}{u}{v}$ would induce a chord.
    Let us show that every index in $\reach^+_a(c)$ is visited by $P$; note that we are now dealing with $a$ as a blocking index, and not $b$.
    Since $c<a$, by Proposition~\ref{prop:same-subtree}, $c$ either lies in $\ct(a^-)$, or in $\ct(k^+)$ for some ancestor $k$ of $a$ with possibly $k=a$.
    Moreover by Proposition~\ref{prop:from-parent-of-root-to-i-consecutive}, all indices in $\reach_a^+(c)$ are visited, as the trace of $\subpath{P}{u}{v}$ contains such an ancestor by Observation~\ref{obs:factors-contain-sp}.
    Thus $v$ is correct. 

    The trace of $\subpath{P}{u}{v}$ is a factor of $B(i,j)$, hence of $B(\ell)$.
    Since $u$ is locking the conditions of Case~\ref{it:lock} are fulfilled.

    \medskip

    \begin{figure}[htb]
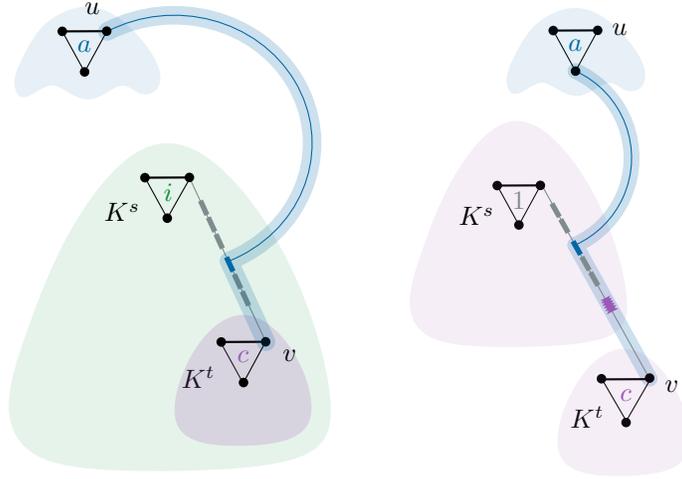

        \hspace{15pt}
        \begin{minipage}[b]{0.48\textwidth}
        \includegraphics[scale=1, page=4]{figures/cases-consecutive.pdf}
        \end{minipage}
        \hspace{-70pt}
        \begin{minipage}[b]{0.48\textwidth}
        \includegraphics[scale=1, page=4]{figures/cases-i1.pdf}
        \end{minipage}
      \caption{The situations of Case 3.b.i when $i\geq 2$ (left) and $i=1$ (right).} 
      \label{fig:case-3b-i}
    \end{figure}

    \needspace{1cm}\casestyleii{Case 3.b:} $P$ reaches $v$ in $K^t$; see Figures~\ref{fig:case-3b-i}--\ref{fig:case-3b-ii-iii}.
    
    Thus we have $c=j$ in the remaining.
    Two situations arise depending on whether $i\geq 2$, in which case $j=i-1$, or if $i=1$, in which case $j\geq 1$.
    If $i\geq 2$, then $v$ also lies in $\zone_u(b)$.
    The same analysis as in the previous cases yields the desired conclusion with $u$ locking, each of $i\in \reach_a^+(c)$ visited by $P$, $v$ correct, and the trace of $\subpath{P}{u}{v}$ being a factor of $B(\ell)$ from $a$ to $c$, i.e., Case~\ref{it:lock}.
    
    Otherwise, if $i=1$, we distinguish three cases based on the values of $a$ and $c$, namely, depending on whether we have $c<a$, $c=a$, or $c>a$. 

    \medskip
    
    \needspace{1cm}\casestyleiii{Case 3.b.i:} $c<a$; see Figure~\ref{fig:case-3b-i} (right).
    
    The trace of $\subpath{P}{u}{v}$ is the reverse of a prefix of $B(c,1)$ going from the starting occurrence of $c$ to the first occurrence of $a$. 
    Hence this trace is equal to the reverse of $B(c,a)$ by definition.
    Since $c<a$, $v$ belongs to $\zone(u)$ and so $u$ is locking. 
    Let us show that every index in $\reach^+_a(c)$ is visited by $P$.
    Since $c<a$, by Proposition~\ref{prop:same-subtree}, $c$ either lies in $\ct(a^-)$, or in $\ct(k^+)$ for some ancestor $k$ of $a$ with possibly $k=a$.
    Moreover by Proposition~\ref{prop:from-parent-of-root-to-i-consecutive}, all indices in $\reach_a^+(c)$ are visited, as the trace of $\subpath{P}{u}{v}$ contains such an ancestor by Observation~\ref{obs:factors-contain-sp}.
    Thus $v$ is correct.
    
    Note that the trace of $\subpath{P}{u}{v}$ is a factor from $c$ to $a$.
    Hence it satisfies the conditions of Case~\ref{it:lock}.
    
    \medskip

    \begin{figure}[htb]
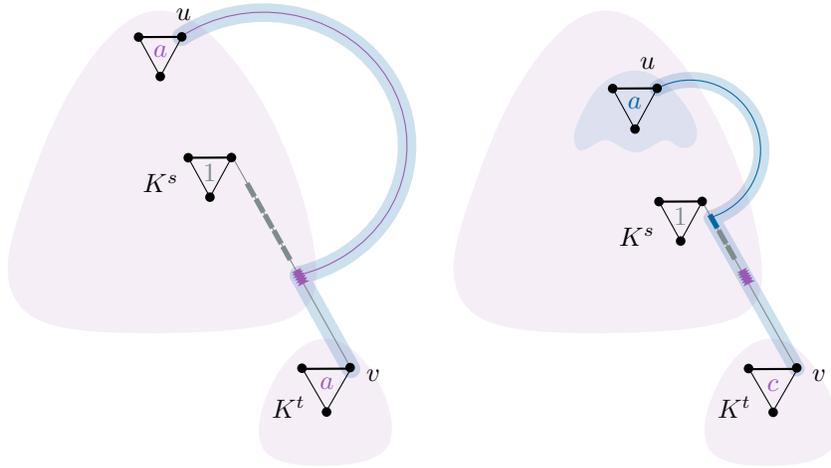

        \hspace{-20pt}
        \begin{minipage}[b]{0.48\textwidth}
        \includegraphics[scale=1, page=5]{figures/cases-i1.pdf}
        \end{minipage}
        \hspace{-10pt}
        \begin{minipage}[b]{0.48\textwidth}
        \includegraphics[scale=1, page=6]{figures/cases-i1.pdf}
        \end{minipage}
      \caption{The situations of Case 3.b.i and Case 3.b.iii.} 
      \label{fig:case-3b-ii-iii}
    \end{figure}
    
    \needspace{1cm}\casestyleiii{Case 3.b.ii:} $a=c$; see Figure~\ref{fig:case-3b-ii-iii} (left). 
    
    The rib incident to $u$ in $P$ must be a hopping rib, as otherwise the hopping ribs of the barrier would induce a chord with $P$. 
    As $u$ is correct and $a=c$, we trivially get that $v$ is correct.
    
    Note that the trace of $\subpath{P}{u}{v}$ is $\{a\}$.
    By construction, $v$ cannot be hopping down, as such ribs are only between consecutive zones of a same rank. Moreover $v$ cannot be hopping up as $P$ would induce a chord with $u$.
    This corresponds to Case~\ref{it:hoppingdown}.
    
    \medskip
    
    \needspace{1cm}\casestyleiii{Case 3.b.iii:}
    $c>a$; see Figure~\ref{fig:case-3b-ii-iii} (right).
    
    Then the trace of $\subpath{P}{u}{v}$ is the reverse of a prefix of $B(c,1)$ going from the starting occurrence of $c$ to the first occurrence of $a$. 
    Hence this trace is equal to the reverse of $B(c,a)$ by definition. 
    By Observation~\ref{obs:i-j-intermediate} this barrier contains blocking-vertices of ranks from $c-1$ to $a+1$, to which we add $c$ because of the hopping rib.
    Moreover $b>c$ as otherwise, since $b>a$, we would have $c>b>a$ and by Observation~\ref{obs:i-j-intermediate} the barrier would contain $b$, a contradiction.
    Since $b>c,a$, by the transitivity of $\reach_b$, we have $\reach_b(a)=\reach_b(c)$.
    Now since all $i\in\reach^+_b(a)$ have been burned by $u$, clearly every $i\in\reach^+_b(c)$ is burned by $v$.
    So $v$ is correct.

    Let us argue that $v$ may not be ascending in that case.
    Note that since $P$ has crossed a part of the right barrier separating $K^s$ and $K^t$, the only way for $v$ to be ascending is for $P$ to now cross the left barrier. 
    However, on this barrier, $P$ is stopped by the rib incident to $u$.
    As of the ribs that precede it, they cannot be used since their corresponding rank has been burned while reaching $K^t$.
    Hence $v$ is not ascending.

    Similarly, $v$ may not be hopping up since the concerned hopping ribs are those of rank $c$, but they have been intersected by $P$ while crossing the right barrier.
    Finally, there is no hopping rib incident to $v$ by construction.
    
    Hence $u$ satisfies the conditions of Case~\ref{it:descending}.

    \medskip

    This concludes the case study and hence the proof.
\end{proof}

We derive the following statement as a corollary of Lemma~\ref{lem:next-is-correct} by induction on the sequence of (representatives of) triangle-vertices in $P$.

\begingroup

\begin{corollary}\label{cor:lock-decrease-rank-suffix}
    Every triangle-vertex of $P$ is correct.
    Moreover, if $u$ is locking, then $\rank(v)<\rank(u)$ for every vertex $v$ after $u$ in $\triangleseq(P)$.
\end{corollary}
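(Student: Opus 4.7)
The plan is to derive both assertions by induction along $\triangleseq(P) = u_1, \dots, u_p$, with Lemma~\ref{lem:next-is-correct} doing essentially all the work. Because that lemma already propagates correctness from $u$ to $\next(u)$, the first assertion reduces to establishing a base case, while the second assertion requires a strengthened induction hypothesis that tracks the zone containing each $u_j$.

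For the first assertion, I would start by handling $u_1$. By the fixed orientation, $u_1$ has the largest rank $b^*$ among the triangle-vertices of $P$, and by convention its index-lock equals $\ell+1$. Therefore $\reach^+_{\ell+1}(b^*) = \{b^*+1, \dots, \ell\}$, and every element of this set is $>b^*$, so it is a ghost rank by Definition~\ref{def:ghost} and thus burned. Hence $u_1$ is correct, and applying Lemma~\ref{lem:next-is-correct} repeatedly gives correctness of $u_2, u_3, \dots$

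For the second assertion, suppose $u = u_i$ is locking. I would prove by induction on $j > i$ the stronger statement that $u_j \in \zone(u_i)$ and $\rank(u_j) < \rank(u_i)$. The base case $j = i+1$ is immediate: $u_{i+1} \in \zone(u_i)$ by the definition of locking, and $\rank(u_{i+1}) < \rank(u_i)$ by Observation~\ref{obs:lock-next-rank-smaller}. For the inductive step from $j$ to $j+1$: since $\rank(u_j) < \rank(u_i)$ forces $u_j \notin K^{u_i}$, we have $u_j \in \zone(u_i) \setminus K^{u_i}$, so $u_i$ qualifies as a candidate in the definition of the index-lock $b_j$ of $u_j$, yielding $b_j \leq \rank(u_i)$. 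Lemma~\ref{lem:next-is-correct} then places $u_{j+1}$ in $\zone_{u_{j+1}}(b_j) = \zone_{u_j}(b_j) \subseteq \zone(u_i)$, the last inclusion coming from the nesting of zones by rank in the skeleton-tree. For the rank bound, a short case check through the five cases of the lemma suffices: in Cases~\ref{it:hoppingup}, \ref{it:hoppingdown} we have $\rank(u_{j+1}) = \rank(u_j)$; in Cases~\ref{it:ascending} and \ref{it:lock} we have $\rank(u_{j+1}) < \rank(u_j)$; and in Case~\ref{it:descending} we have $\rank(u_{j+1}) = c < b_j \leq \rank(u_i)$. In every case, $\rank(u_{j+1}) < \rank(u_i)$.

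I do not expect a serious obstacle here, as the heavy lifting lies inside Lemma~\ref{lem:next-is-correct}. The only subtlety worth flagging is the need to carry the zone-containment $u_j \in \zone(u_i)$ along with the rank inequality in the induction: without the containment, the bound $b_j \leq \rank(u_i)$ on the index-lock cannot be maintained, and then Case~\ref{it:descending}, where the rank can genuinely increase (up to $b_j - 1$), is no longer controllable.
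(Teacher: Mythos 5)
Your proof is correct and follows essentially the same route as the paper: the first assertion is the same induction along $\triangleseq(P)$ with $u_1$ trivially correct and Lemma~\ref{lem:next-is-correct} for the step, and for the second assertion both arguments amount to showing that every vertex after a locking $u$ has index-lock at most $\rank(u)$ and then invoking the bound of rank by index-lock. Your explicit invariant $u_j\in\zone(u_i)$ is a slightly more careful way of justifying the paper's claim that the index-lock can only decrease, but it is the same underlying mechanism.
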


\begin{proof}
    The first assertion follows by induction on $\triangleseq(P)$, noting that the first triangle-vertex of $P$ is trivially correct, and that the next triangle-vertex of a correct vertex in $\triangleseq(P)$ is correct by Lemma~\ref{lem:next-is-correct}.
    
    We turn to proving the second part of the statement. 
    Note that if $u$ is locking then, by Observation~\ref{obs:lock-next-rank-smaller}, $\rank(v)<\rank(u)$ for $v:=\next(u)$, and $v$ has $\rank(u)$ for index-lock.
    Now, since every vertex is correct as proven above, by Lemma~\ref{lem:next-is-correct}, $\next(v)$ has its index-lock which may only decrease compared to that of $v$, and this is true for any subsequent vertex in $\triangleseq(P)$.
    We conclude by Observation~\ref{obs:index-lock-rank-decreases} noting that the rank of a vertex is bounded by its index-lock. 
\end{proof}

\endgroup

Among the vertices in $\triangleseq(P)$, let us define \new{$\specialseq(P)$} as its subsequence starting with the first triangle-vertex (which is of largest rank), containing locking vertices, and ending with the last triangle-vertex.
Again, let us point that a same triangle of $G$ may be intersected twice by $P$, and that only one representative is selected in $\specialseq(P)$, to avoid unnecessary technicalities.

Note that vertices in $\specialseq(P)$ have in common that the vertices that follow them in $P$, if they exist, are of a rank smaller than their index-lock: this is trivial for the vertex of largest rank as its index-lock is $\ell+1$, and it follows from Corollary~\ref{cor:lock-decrease-rank-suffix} for the others.
We shall in fact show a stronger statement, using the following notion.
Recall that $\ct(i)$ denotes the subtree of $\ct$ rooted at index $i$.

\begin{definition}[Range]
    The \emph{range}\mgnt{range, $\range(v)$} of a triangle-vertex $v$, denoted $\range(v)$, is $\ct(k)$ for the largest $k$ such that $\rank(v)\in \ct(k)$ and $b\not\in \ct(k)$, for $b$ its index-lock.
\end{definition}

Note that the range of the first vertex $v$ in $\specialseq(P)$ is thus $\ct$, and the same holds true for vertices of largest rank following $v$ in $\triangleseq(P)$.
 
We show that the index $k$ yielding the range $\ct(k)$ is characterized as the root of $\ct$ or of a subtree as described in Proposition~\ref{prop:smaller-index-incomparable-right-child}.

\begin{proposition}\label{prop:range-root-is-smaller}
    Let $v$ be a triangle-vertex of $\specialseq(P)$.
    If $k$ is such that $\ct(k)=\range(v)$, then $k<b$ for $b$ its index-lock.
    Moreover, either $k=\ell$, $k=b^-$, or $k=j^+$ for some ancestor $j$ of $b$ with possibly $j=b$. 
\end{proposition}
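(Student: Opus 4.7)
The plan is to introduce $a := \rank(v)$ and to unpack the definition of range. Since $\rank(v) \in \ct(k)$ forces $k$ to lie on the unique path in $\ct$ from $a$ to the root $\ell$, the relevant candidates for $k$ are exactly the ancestors of $a$ in $\ct$. Along this chain, the subtrees $\ct(k)$ are nested and strictly grow as $k$ moves toward the root, and by Observation~\ref{obs:descendant-smaller} the integer labels of the ancestors of $a$ are strictly increasing as well. Hence ``the largest such $k$'' can be read either as the largest integer or, equivalently, as the highest ancestor of $a$ in $\ct$ whose subtree avoids $b$. By Observation~\ref{obs:index-lock-rank-decreases} we have $a < b$, with the convention $b = \ell+1$ when the index-lock was not defined by a prior triangle.

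Next, I would split on the value of $b$. If $b = \ell+1$, then $b$ belongs to no subtree $\ct(k)$, so the largest admissible $k$ is $\ell$ and both conclusions hold trivially. Otherwise $b \le \ell$, and I walk up the ancestor chain of $a$ until meeting the first ancestor $k^\ast$ whose subtree contains $b$; this $k^\ast$ is well-defined because $b \in \ct(\ell)$, and $k^\ast \neq a$ because descendants of $a$ all have label at most $a$ by Observation~\ref{obs:descendant-smaller}, whereas $b > a$. By maximality, $k$ is then the child of $k^\ast$ lying on the path from $k^\ast$ down to $a$.

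It remains to distinguish whether $k^\ast = b$ or $k^\ast$ is a strict ancestor of $b$, and this is where I expect the only real subtlety. In the first case, $k$ is a child of $b$, hence $k \in \{b^-, b^+\}$, matching either the second form of the statement or the third with $j = b$; moreover $k < b$ by Observation~\ref{obs:descendant-smaller}. In the second case, set $j := k^\ast$; then $b$ lies in the subtree of the sibling of $k$ in $\ct$. The key step---essentially the main obstacle---is to rule out $k = j^-$: if that were the case, then $b \in \ct(j^+)$, and Observation~\ref{obs:descendant-smaller} would force every element of $\ct(j^+)$, in particular $b$, to be smaller than every element of $\ct(j^-) \ni a$, contradicting $a < b$. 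Hence $k = j^+$, and applying the same observation to $k \in \ct(j^+)$ and $b \in \ct(j^-)$ yields $k < b$, which finishes the proof.
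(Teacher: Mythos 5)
Your proof is correct and follows essentially the same route as the paper: locate $\rank(v)$ relative to the root-path of $b$, and use the maximality in the definition of range to pin $k$ down as a child of the branching ancestor. The only difference is that you re-derive inline, via the label ordering of Observation~\ref{obs:descendant-smaller}, what the paper obtains by citing Proposition~\ref{prop:smaller-index-incomparable-right-child}; you also make the $k<b$ claim more explicit than the paper's proof does.
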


\begin{proof}
    Consider a vertex $v$ in $\specialseq(P)$.
    By Observation~\ref{obs:index-lock-rank-decreases}, $\rank(v)<b$.
    If $b=\ell+1$ the statement holds for $k=\ell$.
    Otherwise, by~Proposition~\ref{prop:smaller-index-incomparable-right-child}, $\rank(v)$ either lies in $\ct(b^-)$, or it lies in $\ct(j^+)$ for some ancestor $j$ of $b$ with possibly $j=b$.
    Note that $\range(v)$ must be disjoint from the path from $b$ to the root of $\ct$. 
    Hence $k\neq j$ and we derive that $k=b^-$ or $k=j^+$ since such a $k$ is maximal with the property that $\rank(v)\in \ct(k)$ and $b\not\in \ct(k)$.
\end{proof}

We are now ready to prove the second of our two key lemmas, which we will use to show that the first triangle-vertex and locking vertices confine the remaining of the (trace of the) path into further and further nested subtrees of $\ct$.

\needspace{2cm}

\begingroup
\newcommand\nextu{u'}
\begin{lemma}\label{lem:between-locks}
    Let $u,v$ be two consecutive vertices in $\specialseq(P)$, and $\nextu:=\next(u)$.
    Then $\subpath{P}{\nextu}{v}$ only contains triangle-vertices whose ranks are in~$\range(v)$, and all these vertices have the same range. 
    % and, in particular, $\rank(v)\in \range(\nextu)$. 
    Moreover, the length of $\subpath{P}{\nextu}{v}$ is bounded by $10\cdot |B(k)|$, for $k$ such that $\ct(k)=\range(v)$. 
\end{lemma}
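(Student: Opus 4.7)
The plan is to split the proof into two parts: first establishing the structural claim about ranges, then bounding the length.

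For the range claim, I first observe that since $u$ and $v$ are consecutive in $\specialseq(P)$, no triangle-vertex strictly between them in $\triangleseq(P)$ is locking, so all these triangle-vertices (including $u'$ and $v$) share a common index-lock $b$; this $b$ equals $\rank(u)$ if $u$ is locking, and $\ell+1$ if $u$ is the first vertex of $P$. Stability of the index-lock across non-locking transitions follows from Lemma~\ref{lem:next-is-correct}, which guarantees that $\next(w)\in \zone_w(b)$ and that no intermediate triangle-vertex can contribute a new, more recent candidate for the index-lock definition. Then, applying Lemma~\ref{lem:next-is-correct} to each consecutive pair $w, w' := \next(w)$ in $\triangleseq(P)$ within $\subpath{P}{u'}{v}$, we are in one of Cases~\ref{it:hoppingup}, \ref{it:hoppingdown}, \ref{it:ascending}, or \ref{it:descending} (not Case~\ref{it:lock}, since $w$ is non-locking). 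In each case I verify that $\rank(w) \reach_b \rank(w')$: in Cases~\ref{it:hoppingup} and \ref{it:hoppingdown} the ranks coincide, while in Cases~\ref{it:ascending} and \ref{it:descending} the trace of $\subpath{P}{w}{w'}$ is either $B(\rank(w),\rank(w'))$ or the reverse of $B(\rank(w'),\rank(w))$, and these factors avoid $b$ by Proposition~\ref{prop:correct-forbidden-indices-property}, witnessing the $\reach_b$-equivalence.

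By transitivity of $\reach_b$, all triangle-vertex ranks in $\subpath{P}{u'}{v}$ are pairwise $\reach_b$-equivalent. Proposition~\ref{prop:same-subtree} then implies they all lie in the same subtree of $\ct$, which must be $\ct(k)=\range(v)$ by Proposition~\ref{prop:range-root-is-smaller}. This settles the first assertion.

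For the length bound, the key step is to show that the trace $W$ of $\subpath{P}{u'}{v}$ has length at most $|B(k)|$. The argument is to show that $W$ is confined to a single occurrence of $B(k)$ inside $B(\ell)$: indeed, $W$ avoids the letter $b$ by Proposition~\ref{prop:correct-forbidden-indices-property}, and consecutive occurrences of $B(k)$ inside $B(\ell)$ are separated either by an occurrence of $b$ (when $k=b^-$, directly from the recursive definition of $B(b)$) or by ancestors of $k$ lying outside $\ct(k)$ (when $k=j^+$, using the canonical description of maximal $b$-avoiding factors given by Proposition~\ref{prop:factors-bounded-by-b-are-equal}). In either case, the trace cannot cross from one occurrence of $B(k)$ to another, so $|W|\leq |B(k)|$. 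Finally, since the blow-up replaces each triangle by $3$ vertices sharing the same rank and each blocking-vertex by $3$ representatives of that rank, every letter of $W$ is produced by at most a constant number of consecutive vertices of $\subpath{P}{u'}{v}$; the constant $10$ comfortably absorbs the overhead of hopping ribs, triangle re-entries, and the boundary vertices at $u'$ and $v$.

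The main obstacle I anticipate is rigorously establishing the confinement of $W$ to a single occurrence of $B(k)$, given that the induced path can alternate between Case~\ref{it:ascending} transitions (which correspond to moving forward in $B(\ell)$) and Case~\ref{it:descending} transitions (which correspond to moving backward), so the walk traced in $B(\ell)$ is not monotone. Composing these transitions while tracking that every ``retreat'' stays within the same occurrence of $B(k)$, and quantifying the blow-up overhead sharply enough to stay under the factor $10$, is the crux of the length argument.
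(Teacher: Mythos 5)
The first half of your argument (all ranks between consecutive special vertices are pairwise $\reach_b$-equivalent for the common index-lock $b$, hence lie in a single subtree by Proposition~\ref{prop:same-subtree}, which is $\range(v)$ by Proposition~\ref{prop:range-root-is-smaller}) is essentially the paper's argument and is fine, modulo the special case where $u$ is the first vertex of $P$ and $\rank(\next(u))=\rank(u)$ because $u$ is hopping, which the paper treats separately.

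The length bound, however, has a genuine gap, and you have correctly located it yourself. Your claim that the trace $W$ of $\subpath{P}{\nextu}{v}$ has length at most $|B(k)|$ because it is ``confined to a single occurrence of $B(k)$'' does not follow: $W$ is the collapsed sequence of ranks along a \emph{walk} through physically distinct barriers, not a factor of $B(\ell)$, so confinement of the visited indices to $\ct(k)$ puts no bound on $|W|$ by itself --- a walk that repeatedly moves forward and backward inside one occurrence of $B(k)$ could a priori have an arbitrarily long trace. The missing ingredient is the ``moreover'' clause of Case~\ref{it:descending} of Lemma~\ref{lem:next-is-correct}: once a vertex is descending or jumping, its successor can be neither ascending nor hopping. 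Hence between two consecutive special vertices the sequence of transitions is not an arbitrary alternation (as you assert when describing the obstacle) but an ascending/hopping phase followed by a descending/jumping phase. Each phase is monotone in rank, so by Observation~\ref{obs:concatenation-of-barriers} its collapsed trace is a factor (respectively the reverse of a factor) of $B(\ell)$ supported on $\ct(k)$, giving $|W|\leq 2\cdot|B(k)|$, not $|B(k)|$. One also needs an explicit bound on the number $p$ of triangles visited (the monotonicity gives $p\leq 4k$) to account for the letters duplicated at the junctions and for the triangle-vertices themselves; the paper's count is $3\cdot 2\cdot|B(k)|+p\leq 10\cdot|B(k)|$. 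Without the monotonicity argument neither the trace bound nor the bound on $p$ is available, so the constant $10$ cannot simply ``absorb the overhead.''
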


\begin{proof}
    Consider the sequence $\nextu=v_1,\dots,v_p=v$ of consecutive triangle-vertices of $\triangleseq(P)$ in $\subpath{P}{u'}{v}$.    
    Let $b$ be the index-lock of $u$.
    By definition of $\specialseq$, either $b=\ell+1$, if $u$ is of largest rank, or $b\leq \ell$.
    Also, note that we may have ${\rank(v_1)<\rank(u)}$, or ${\rank(v_1)=\rank(u)}$ and $u$ is not locking and so it must be of largest rank in $P$.

    We distinguish this latter case where $\rank(v_1)=\rank(u)$.
    Then $v_1$ is also of largest rank.
    Then from the different possibilities that Lemma~\ref{lem:next-is-correct} offers, together with the fact that $u$ and $v_1$ are of largest rank, it must be that $u$ is hopping up, or down.
    See Figure~\ref{fig:largest-ranks} for an illustration of this case.
    So $v_1$ is also of largest rank.
    Then $v_1$ may be locking, in which case we have $v=v_1$, or $v_2$ is also of largest rank, in which case we must have that $u$ is hopping up, and $v_1$ hopping down, by the combinations allowed by Lemma~\ref{lem:next-is-correct}.
    However in that latter case, $v_2$ must be locking, and so $v=v_2$.
    Note that the bound is satisfied for these two distinct cases, since $|B(k)|\geq 1$ and $P$ visits at most two triangles before reaching $v$, each being intersected twice.
    Therefore the range of these vertices is $\ct$ and we conclude to the desired claim.

    Thus, we may now assume that $v_1$ is such that $\rank(v_1)<\rank(u)$.
    Note that we may still have $b=\ell+1$ or $b\leq \ell$.
    
    By Lemma~\ref{lem:next-is-correct}, every $v_i$, $1\leq i<p$ either verify $\rank(v_i)=\rank(v_{i+1})$ or it verifies $\rank(v_{i+1})\in \reach_b(\rank(v_i))$, with $\rank(v_{i+1})<b$ in both cases.
    Hence each of $v_1,\dots,v_p$ has its rank smaller than $b$.
    By the transitivity of $\reach_b$ we derive the equality $\reach_b(\rank(v_i))=\reach_b(\rank(v_{i+1}))$ for all such $i$. 
    If $b\leq \ell$, by Lemma~\ref{prop:same-subtree}, the ranks of $v_1,\dots,v_p$ all lie in a single subtree of $\ct$ being either $\ct(b^-)$, or $\ct(j^+)$ for some ancestor $j$ of $b$ with possibly $j=b$.
    This subtree is precisely the one having $k$ as its root by Proposition~\ref{prop:range-root-is-smaller}, as $T(k)=\range(v)$.
    Otherwise if $b=\ell+1$ then the range is $\ct(k)$ for $k=\ell$.
    So the ranks of $v_1,\dots,v_p$ all lie in $\ct(k)$.

    Let us now consider the shape of $v_1,\dots,v_p$.
    By definition, no vertex in this sequence, except possibly for $v_1$ and $v_p$, are locking.
    By Lemma~\ref{lem:next-is-correct}, four possibilities arise:
    \begin{itemize}
        \item $v_i$ is hopping up, in which case $v_{i+1}$ is not;
        \item $v_i$ is hopping down, in which case $v_{i+1}$ is neither hopping up nor down;
        \item $v_i$ is ascending and satisfies $\rank(v_{i+1})<\rank(v_i)$;
        \item $v_i$ is descending or jumping, with $\rank(v_{i+1})>\rank(v_i)$; moreover in that case, $v_{i+1}$ can neither be ascending, hopping up, nor hopping down.
    \end{itemize}
    From the different combinations these possibilities allow, we derive that $P$ may possibly be ascending several times---while hopping up and/or hopping down once at each step---and then it is only descending/jumping.
    Note that the maximum value of $p$ is thus achieved when $P$ has ascended as much as possible, i.e., $\rank(v)\leq k$ times since these ranks lie in $\ct(k)$, and then has descended as much as possible, i.e., $k$ times. 
    We thus get that $p\leq 3k + k= 4k$.

    Let us now focus on the trace of $v_1,\dots,v_q$.
    By Lemma~\ref{lem:next-is-correct}, the trace of $P$ between consecutive $v_i,v_{i+1}$ is equal to $B(\rank(v_i),\rank(v_{i+1}))$ when $v_i$ is ascending, to $B(\rank(v_{i+1}),\rank(v_i))$ when it is descending/jumping, or it consists of the single index $\rank(v_i)$ otherwise.
    Thus, in addition to the above bound on $p$, from the characterization of the shape of $v_1,\dots,v_q$ we derive the following.
    When ascending, the concatenation of $B(\rank(v_i),\rank(v_{i+1}))$ for consecutive $v_i$ defines a factor of $B(\ell)$, hence of $B(k)$, up to contracting repeated indices by Observation~\ref{obs:concatenation-of-barriers}.
    We derive the same when $P$ is descending/jumping. 

    So the trace of $P$ from $u$ to $v$ has total length bounded by $2\cdot |B(k)|$.
    Note that the repeated indices, in the concatenation, are those of the triangle-vertices visited by $P$.
    Hence their number is bounded by $p$.
    Now, since to each index in a barrier corresponds $3$ vertices representing it, we derive that $P$ has length at most $3\cdot 2\cdot |B(k)|+p=6\cdot |B(k)| + 4k\leq 10\cdot |B(k)|$.
\end{proof}

\endgroup % macros of the above proof

\begin{figure}[htb]
    \centering
    \includegraphics[scale=1, page=7]{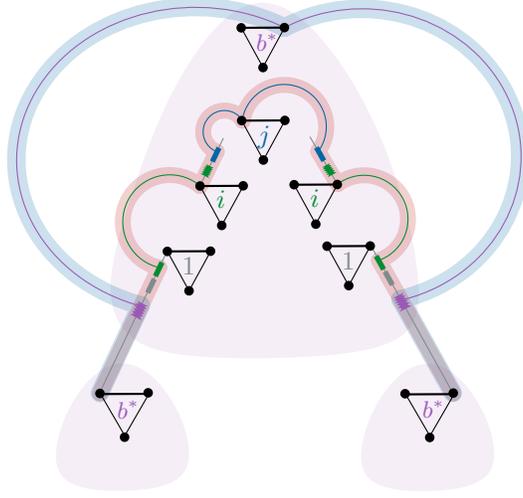}
    \caption{Two possible situations for $P$ containing vertices of largest rank $b^*$. In red, two consecutive vertices or largest rank in $\specialseq(P)$, separated by a sequence of ascending and descending/jumping vertices as detailed in the proof of Lemma~\ref{lem:between-locks}. In blue, three consecutive vertices of largest rank, the first one being hopping up, the second being hopping down.} 
    \label{fig:largest-ranks}
\end{figure}

We conclude this section with the following bound on the order of an induced path which has triangle-vertices as its endpoints, with the first endpoint being of largest rank.
This bound is what will be used to prove our main theorem.

\needspace{2cm}

\begin{lemma}\label{lem:total-length}
    Let $u_1,\dots,u_p$ be the vertices of $\specialseq(P)$ and $k_1,\dots,k_p$ be the indices such that $\ct(k_i)=\range(u_i)$ for every $1\leq i\leq p$.
    Then 
    $\ct(k_{i+1})$ is a proper subtree of $\ct(k_i)$ for all $2\leq i<p$, 
    and the order of $P$ is bounded by
    \[
        13\cdot \left(\sum_{i=1}^{p} |B(k_i)|\right)+1.
    \]
\end{lemma}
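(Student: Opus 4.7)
The plan is to establish the two assertions in turn, obtaining the order bound from Lemma~\ref{lem:between-locks} once the nested structure of the ranges is in place.

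For the nesting claim, fix $i$ with $2 \leq i < p$. Since $u_i$ lies strictly between the first and last vertices of $\specialseq(P)$, the definition of $\specialseq(P)$ forces $u_i$ to be locking. The index-lock of $\next(u_i)$ then equals $\rank(u_i)$ by the case analysis in Lemma~\ref{lem:next-is-correct}, and by the proof of Corollary~\ref{cor:lock-decrease-rank-suffix} the index-lock of any triangle-vertex further along $\triangleseq(P)$ can only decrease, and only through subsequent locking transitions. Since $u_i$ and $u_{i+1}$ are consecutive in $\specialseq(P)$, no such transition occurs in between, so the index-lock $b_{i+1}$ of $u_{i+1}$ is exactly $\rank(u_i)$. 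Let $b_i$ denote the index-lock of $u_i$.

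The key step is to show $\rank(u_{i+1}) \in \ct(k_i)$. Since $u_i$ is locking, the entire subpath $\subpath{P}{u_i}{u_{i+1}}$ stays within $\zone(u_i) \subsetneq \zone_{u_i}(b_i)$, and hence avoids any vertex of rank $b_i$. Applying Proposition~\ref{prop:index-trace-factor} to each pair of consecutive triangle-vertices of this subpath and chaining through the transitivity of the equivalence relation $\reach_{b_i}$ yields $\rank(u_i) \reach_{b_i} \rank(u_{i+1})$. Combining Propositions~\ref{prop:same-subtree} and~\ref{prop:range-root-is-smaller}, both ranks must then lie in $\ct(k_i)$. Now $\rank(u_{i+1}) \in \ct(k_{i+1}) \cap \ct(k_i)$, so the two subtrees are nested; the reverse containment $\ct(k_i) \subseteq \ct(k_{i+1})$ is excluded as it would place $\rank(u_i) = b_{i+1}$ into $\ct(k_{i+1})$, contradicting the definition of $k_{i+1}$. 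Proper containment then follows from $\rank(u_i) \in \ct(k_i) \setminus \ct(k_{i+1})$.

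For the order bound, I apply Lemma~\ref{lem:between-locks} to each consecutive pair $(u_i, u_{i+1})$ in $\specialseq(P)$ to bound the length of $\subpath{P}{\next(u_i)}{u_{i+1}}$ by $10\cdot |B(k_{i+1})|$. The initial segment $\subpath{P}{u_i}{\next(u_i)}$ consists of at most one barrier or rib traversal whose trace, by Lemma~\ref{lem:next-is-correct}, is a factor of $B(\rank(u_i),\rank(\next(u_i)))$ and hence of $B(k_i)$; since each index in a barrier corresponds to three vertices in the blow-up, this contributes at most $3\cdot |B(k_i)|$ to the length. Summing across $i = 1,\dots,p-1$ yields
\[
\sum_{i=1}^{p-1}\bigl(3|B(k_i)| + 10|B(k_{i+1})|\bigr) \leq 13\sum_{i=1}^{p}|B(k_i)|,
\]
and adding $1$ to pass from length to order gives the claimed bound. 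The main obstacle in the whole argument is the reachability step used to obtain $\rank(u_{i+1}) \in \ct(k_i)$: it relies crucially on confining $\subpath{P}{u_i}{u_{i+1}}$ to $\zone(u_i)$ via the locking property of $u_i$ and on chaining Proposition~\ref{prop:index-trace-factor} with the transitivity of $\reach_{b_i}$, while the rest is a routine bookkeeping of the bounds provided by Lemmas~\ref{lem:next-is-correct} and~\ref{lem:between-locks}.
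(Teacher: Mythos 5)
Your proof is correct and follows essentially the same route as the paper: the order bound uses the identical decomposition into the segments $\subpath{P}{u_i}{\next(u_i)}$ (at most $3\cdot|B(k_i)|$) and $\subpath{P}{\next(u_i)}{u_{i+1}}$ (at most $10\cdot|B(k_{i+1})|$ via Lemma~\ref{lem:between-locks}), and the nesting claim rests on the same ingredients --- $u_i$ being locking, Propositions~\ref{prop:same-subtree} and~\ref{prop:range-root-is-smaller} placing the relevant ranks in $\ct(k_i)$, and the observation that $\rank(u_i)$ is the index-lock of the vertices of range $\ct(k_{i+1})$ and therefore lies outside $\ct(k_{i+1})$. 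The one imprecision is the inference ``the subpath stays within $\zone(u_i)$, hence avoids any vertex of rank $b_i$'': zone membership alone does not exclude rank-$b_i$ vertices, since bottom endpoints of ribs of rank $b_i$ do lie inside $\zone_{u_i}(b_i)$, so this step should instead appeal to the first assertion of Proposition~\ref{prop:correct-forbidden-indices-property} for the correct vertex $u_i$ --- which is exactly how the paper handles it inside the proof of Lemma~\ref{lem:between-locks}.
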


\begin{proof}
    We first show that $\ct(k_{i+1})$ is a subtree of $\ct(k_i)$ for all $2\leq i<p$.
    Note that $u_i$ is locking by definition and the choice of $i$.    
    Let $a$ be its rank, and let $v:=\next(u_i)$ and $c:=\rank(v)$.
    Then $c<a$.

    By Lemma~\ref{lem:next-is-correct}, $c\in \reach_b(a)$, with $b$ the index-lock of $u_i$.
    If $b\leq \ell$, by Proposition \ref{prop:same-subtree}, $a$ and $c$ both lie in $\ct(k)$ for $k$ being either equal to $b^-$, or to $j^+$ for some ancestor $j$ of $b$ with possibly $j=b$. 
    If $b=\ell+1$, they trivially both lie in $T(k)$ for $k=\ell$.
    By Proposition~\ref{prop:range-root-is-smaller} we actually have $k=k_i$.
    Moreover by Lemma~\ref{lem:between-locks} the range of $v$ is $\ct(k_{i+1})$.
    Since $v$ lies in $\ct(k_{i+1})$ and $b$ does not, $\ct(k_{i+1})$ must be a subtree of $\ct(k_i)$.
    Since $a$ is the index-lock of $c$, $\ct(k_{i+1})$ does not contain $a$.
    We conclude that $\ct(k_{i+1})$ is a proper subtree of $\ct(k_i)$, as desired.

    We now prove the bound on the order of $P$.
    Let $v_1,\dots,v_{p-1}$ be such that $v_i:=\next(u_i)$ for all $i\in\intv{1}{p-1}$.
    Note that $P$ is the concatenation of 
    $\subpath{P}{u_i}{v_i}$ and of $\subpath{P}{v_i}{u_{i+1}}$ for $i$ ranging from $1$ to $p-1$, plus possibly one extra vertex which may be the one sharing the same triangle as $u_{p}$.
    
    We start with paths of the first type.
    Note that $\subpath{P}{u_1}{v_1}$ is limited to a single barrier, so it is of order bounded by $3\cdot |B(\ell)|$, hence bounded by $3\cdot |B(k_1)|$ since the range of $u_1$ is $\ct$.
    
    For $i\geq 2$ we have that $u_i$ is locking.
    In that case, by Lemma~\ref{lem:next-is-correct}, we have that the trace of $\subpath{P}{u_i}{v_i}$ is limited to a factor of $B(\ell)$ going from an occurrence of $\rank(u_i)$ to an occurrence of $\rank(v_i)$.
    Moreover by the same lemma we have $\rank(v_i)\in \reach_{b_i}(\rank(u_i))$ for $b_i$ being the index lock of $u_i$.
    Hence both $\rank(u_i)$ and $\rank(v_i)$ lie in the range $\ct(k_i)$ of $u_i$.
    So this factor is bounded by $3\cdot |B(k_i)|$.

    Finally, by Lemma~\ref{lem:between-locks}, the order of each of the subpaths $\subpath{P}{v_i}{u_{i+1}}$ is bounded by $10\cdot |B(k_{i+1})|$.
    Finally, the last vertex counts for one.
    We conclude to the desired bound.
\end{proof}

\section{Size of the barriers}\label{sec:barrier-size}

As we saw in the previous section, up to fixing an orientation and assuming endpoints to be triangle-vertices with the first one of largest rank---two assumptions that we will later argue to be reasonable---the length of longest induced paths in our construction can be bounded from above by a function of the size of the index-barriers. 
Recall that an index-barrier is constructed from an index-tree (see Definition~\ref{def:full-barrier}). From the definition it can be observed that the size of the index-barrier is strongly related to the two following parameters of the index-tree: the size of the left subtree, which is explored twice, and the depth of the tree.

An easy recurrence\footnote{As we will not use this result, we refrain from giving a proof.} shows that if $\ct$ is a complete binary tree, then the length of the full barrier $B(\ell)$ is bounded by $\ell^{\log 3}$ which is $O(\ell^{1.6})$.
%We show that a trade-off between the size of the left subtree, and the maximum depth in the tree, in the definition of $\ct$,
We show here that taking for $\ct$ an appropriately unbalanced tree, the bound become $O(\ell \log \ell)$.

\newcommand{\n}{m}
\needspace{2cm}

\begin{definition}
    Given an integer $\n$ and a real $\alpha>2$, we define a tree  \new{$T_\alpha(\n)$} inductively as follows:
    \begin{itemize}
        \item If $\n=0$, $T_\alpha(\n)$ is empty;
        \item Otherwise, it consists of a root with $T_{\alpha(\alpha-1)}(\lfloor \n/\alpha\rfloor)$ as its left child, and $T_{\alpha}(\n-\lfloor \n/\alpha\rfloor-1)$ as its right child.
    \end{itemize}
\end{definition}

Note that a node in this tree can have no left child and still have a right child.
The following is a consequence of the definition.

\begin{lemma}\label{lem:imbalanced-index-tree-size}
    $T_\alpha(\n)$ contains $\n$ nodes.
\end{lemma}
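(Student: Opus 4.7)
The plan is a direct induction on $\n$. For the base case $\n=0$, the definition gives that $T_\alpha(0)$ is empty, so it has $0$ nodes, as required.

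For the inductive step, I would assume that $\n \geq 1$ and that the statement holds for every non-negative integer strictly smaller than $\n$. By definition, $T_\alpha(\n)$ consists of a root together with a left subtree $T_{\alpha(\alpha-1)}(\lfloor \n/\alpha\rfloor)$ and a right subtree $T_\alpha(\n - \lfloor \n/\alpha\rfloor - 1)$. Before applying the induction hypothesis, I would briefly check that both recursive parameters are non-negative integers strictly less than $\n$: since $\alpha > 2$ and $\n \geq 1$, we have $0 \leq \lfloor \n/\alpha\rfloor < \n$, and consequently $0 \leq \n - \lfloor \n/\alpha\rfloor - 1 \leq \n - 1$. So the induction hypothesis applies to both subtrees, regardless of the fact that the left subtree uses a different $\alpha$-parameter (since the statement we are proving is uniform in $\alpha$).

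Summing the contributions, the total number of nodes in $T_\alpha(\n)$ is
\[
1 + \lfloor \n/\alpha\rfloor + \bigl(\n - \lfloor \n/\alpha\rfloor - 1\bigr) = \n,
\]
which closes the induction.

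There is no real obstacle here: the lemma is essentially a sanity check that the definition of $T_\alpha(\n)$ is consistent with its intended size, and the ``$-1$'' in the right subtree's parameter is precisely chosen so that the arithmetic telescopes. The only minor point to be careful about is to state the induction uniformly over all admissible values of $\alpha$, since the left recursive call changes $\alpha$ into $\alpha(\alpha-1)$ (which remains $>2$, so the definition itself is well-posed).
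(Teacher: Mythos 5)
Your induction is correct and is exactly the argument the paper has in mind: the lemma is stated there without proof as "a consequence of the definition," and your telescoping count $1 + \lfloor \n/\alpha\rfloor + (\n - \lfloor \n/\alpha\rfloor - 1) = \n$, together with the check that both recursive parameters are strictly smaller than $\n$ and that the statement is uniform in $\alpha$, fills in that omitted routine verification.
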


Let us denote by \new{$B_\alpha( \n)$} the full index-barrier of Definition~\ref{def:full-barrier} with $\ct:=T_\alpha(m)$ as an index-tree. (For simplicity, let us assume that the full index-barrier of the empty tree is defined as the empty word, and that the full index-barrier of $i$ with a single child is defined by $B(i)=i\circ i \circ B(i-1) \circ i \circ i$, i.e., as if $i-1$ was a right child. Note that we can make these assumptions as it upper bounds the size of the actual barrier which is defined as $i \circ B(i-1) \circ i$ in that case.)

\begin{lemma}\label{lem:imbalanced-barrier-size}
     For every integer $\n$ and real $\alpha >2$,
    the index-barrier $B_\alpha(\n)$ has length at most 
    $4\cdot \frac{\n\log (\alpha \n)}{\log (\alpha-1)}.$
\end{lemma}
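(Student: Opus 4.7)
The approach is to derive a uniform recurrence for $f(m,\alpha) := |B_\alpha(m)|$ and then prove the stated bound by strong induction on $m$. Unfolding Definition~\ref{def:full-barrier} at the root of $T_\alpha(m)$ and applying the convention stated just before the lemma (which treats a root with only a right child as if its left subtree were the empty tree, contributing length $0$), I would obtain the uniform recurrence
\[
    f(m, \alpha) \leq 2\, f\!\bigl(\lfloor m/\alpha\rfloor,\, \alpha(\alpha-1)\bigr) + f\!\bigl(m - \lfloor m/\alpha\rfloor - 1,\, \alpha\bigr) + 4,
\]
valid for $m \geq 1$, with base case $f(0, \alpha) = 0$. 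Both branches of Definition~\ref{def:full-barrier} (two children, or one child treated via the convention) yield the same count $2|B(i^-)| + |B(i^+)| + 4$, which is what makes the recurrence uniform.

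The arithmetic identity driving the proof is
\[
    \alpha(\alpha-1) - 1 \geq (\alpha-1)^2 \quad \text{for all } \alpha \geq 2,
\]
equivalently $\log(\alpha(\alpha-1) - 1) \geq 2\log(\alpha-1)$. This is precisely what absorbs the factor $2$ in front of the left-subtree contribution: applying the induction hypothesis at parameter $\alpha(\alpha-1)$ produces a $\log(\alpha(\alpha-1)-1)$ in the denominator, which after halving recovers $\log(\alpha-1)$, matching the denominator of the target. Combined with the estimate $\alpha(\alpha-1)\lfloor m/\alpha\rfloor \leq (\alpha-1) m$, this turns the left-subtree contribution into $\tfrac{4\lfloor m/\alpha\rfloor \log((\alpha-1)m)}{\log(\alpha-1)}$, while the right-subtree contribution is directly bounded by $\tfrac{4(m-\lfloor m/\alpha\rfloor-1)\log(\alpha m)}{\log(\alpha-1)}$.

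The inductive step then reduces, after substituting both induction hypotheses and rearranging, to verifying an inequality of the form
\[
    4(k+1)\bigl(\log \alpha - \log(\alpha-1)\bigr) + 4\log m \geq 0,
\]
with $k = \lfloor m/\alpha\rfloor$, which is trivially satisfied for $m \geq 1$ and $\alpha > 2$. The main delicate point will be to check that the additive constant $+4$ coming from the recurrence is exactly absorbed by the slack $4(k+1)(\log\alpha - \log(\alpha-1))$ generated when passing from $\log((\alpha-1)m)$ to $\log(\alpha m)$ in the target; since $\log\alpha > \log(\alpha-1)$ strictly for $\alpha > 2$, this slack is always available, and the induction closes.
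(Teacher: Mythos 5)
Your proposal is correct and follows essentially the same route as the paper: the same uniform recurrence $|B_\alpha(m)|\le 2|B_{\alpha(\alpha-1)}(\lfloor m/\alpha\rfloor)|+|B_\alpha(m-\lfloor m/\alpha\rfloor-1)|+4$, the same strong induction, and the same key inequality $\log(\alpha(\alpha-1)-1)\ge 2\log(\alpha-1)$ to absorb the factor $2$ on the left subtree. The only (immaterial) difference is bookkeeping at the end: the paper further relaxes $\log((\alpha-1)m)$ to $\log(\alpha m)$ so both terms share a common factor and the $+4$ is absorbed by the coefficient $m-1$ versus $m$, whereas you keep $\log((\alpha-1)m)$ and absorb the $+4$ via the slack $(k+1)(\log\alpha-\log(\alpha-1))+\log m\ge 0$.
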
 

\begin{proof}
    The proof is by induction on $\n$. The statement is true for $\n=0$ and $\n=1$ as the barrier has respectively length 0 or 1 in these cases.
    So we now consider $\n>1$ and suppose that the statement is true for all $\n'<\n$ and all $\alpha'>2$ (induction hypothesis).
    By definition of the full index-barrier (Definition~\ref{def:full-barrier}) we have: 
    \begingroup
    \allowdisplaybreaks % allow page break
    \begin{align*}
        |B_\alpha(n)| &= 2|B_{\alpha(\alpha-1)}(\lfloor \n/\alpha\rfloor)| + |B_\alpha(\n-\lfloor \n/\alpha\rfloor-1)| + 4\\
        &\le 
            2\cdot 4\cdot \frac{\lfloor \n/\alpha\rfloor\log \big(\alpha(\alpha-1)\lfloor \n/\alpha\rfloor\big)}{\log\big(\alpha(\alpha-1)-1\big)}\\
            &\phantom{\le 2}\hspace{-.077cm} % ugly hack for spacing
            + 4\cdot \frac{(\n-\lfloor \n/\alpha)\rfloor-1)\cdot\log\big(\alpha(\n-\lfloor \n/\alpha)\rfloor-1)\big)}{\log(\alpha-1)}
            +4\\
            &\le 
            2\cdot 4\cdot \frac{\lfloor \n/\alpha\rfloor\log (\alpha \n)}{\log\big((\alpha-1)^2+\alpha-2\big)}
            +4\cdot \frac{(\n-\lfloor \n/\alpha)\rfloor-1)\cdot\log(\alpha \n)}{\log(\alpha-1)}
            +4\\
            &\le 
            2\cdot 4\cdot \frac{\lfloor \n/\alpha\rfloor\log (\alpha \n)}{\log\big((\alpha-1)^2\big)}
            +4\cdot \frac{(\n-\lfloor \n/\alpha)\rfloor-1)\cdot\log(\alpha \n)}{\log(\alpha-1)}
            +4\\
            &\le 
             4\cdot \frac{\lfloor \n/\alpha\rfloor\log (\alpha \n)}{\log(\alpha-1)}
            +4\cdot \frac{(\n-\lfloor \n/\alpha)\rfloor-1)\cdot\log(\alpha \n)}{\log(\alpha-1)}
            +4\\
            &\le 
             4\cdot \frac{\log \alpha \n}{\log(\alpha-1)}\cdot(\lfloor \n/\alpha\rfloor + \big(\n-\lfloor \n/\alpha)\rfloor-1)\big) + 4\\
            &\le 
             4\cdot \frac{\log (\alpha \n)}{\log(\alpha-1)}\cdot \n 
            =
             4\cdot \frac{\n\log (\alpha \n)}{\log(\alpha-1)}
             \qedhere
    \end{align*}
    \endgroup
\end{proof} 

We conclude this section by showing that the sum of Lemma~\ref{lem:total-length} is similarly bounded under the condition that subtrees are properly included.

\begin{lemma}\label{lem:imbalanced-sum-of-barriers-size}
    Given an integer $\n$ and a real $\alpha >2$, let $\ct:=T_\alpha(\n)$ and $k_1,\dots,k_p$ be any sequence of indices of $\ct$ such that $\ct(k_{i+1})$ is a proper subtree of $\ct(k_i)$ for all $i\in\intv{1}{p-1}$. 
    Then
    \[
        \sum_{i=1}^{p} |B(k_i)| \leq 4\cdot \frac{\n\log(\alpha \n)\alpha}{\log(\alpha-1)}.
    \]
\end{lemma}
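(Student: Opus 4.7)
The plan is to reduce the bound on $\sum_i |B(k_i)|$ to a bound on $\sum_i m_i$, where $m_i := |V(\ct(k_i))|$, using Lemma~\ref{lem:imbalanced-barrier-size}, and then to establish $\sum_i m_i \leq \alpha m$ by a short induction on the recursive structure of $T_\alpha(m)$.

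First I would observe that every subtree $\ct(k_i)$ of $\ct = T_\alpha(m)$ is itself of the form $T_{\alpha_i}(m_i)$, where the parameter pair $(\alpha_i, m_i)$ is obtained by iterating the recursion: a left move sends $(\alpha, m)$ to $(\alpha(\alpha-1), \lfloor m/\alpha\rfloor)$ and a right move sends it to $(\alpha, m - \lfloor m/\alpha\rfloor - 1)$. Two monotonicity facts follow along any descending chain: $\alpha_i \geq \alpha$ (since $\alpha(\alpha-1) > \alpha$ for $\alpha > 2$) and $\alpha_i m_i \leq \alpha m$ (both kinds of moves strictly decrease the product $\alpha m$, since $\alpha(\alpha-1)\lfloor m/\alpha\rfloor \leq (\alpha-1)m$ and $\alpha(m-\lfloor m/\alpha\rfloor - 1) < \alpha m$). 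Applying Lemma~\ref{lem:imbalanced-barrier-size} with the parameters $(\alpha_i, m_i)$ and using these monotonicities yields
\[
|B(k_i)| \leq 4\cdot \frac{m_i \log(\alpha_i m_i)}{\log(\alpha_i - 1)} \leq 4\cdot \frac{m_i \log(\alpha m)}{\log(\alpha - 1)},
\]
so the prefactor can be pulled out of the sum uniformly.

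Next, I would prove $\sum_{i=1}^p m_i \leq \alpha m$ by strong induction on $m$. The cases $m \in \{0,1\}$ are immediate. For $m \geq 2$, if $k_1$ is not the root of $T_\alpha(m)$ then the whole chain lies inside a child subtree, and the induction hypothesis gives at most $\alpha(\alpha - 1)\lfloor m/\alpha\rfloor \leq (\alpha-1)m$ (left child) or $\alpha(m - \lfloor m/\alpha\rfloor - 1) < \alpha m$ (right child), both strictly less than $\alpha m$. If $k_1$ is the root then $m_1 = m$ and the remaining chain $k_2, \dots, k_p$ lies in one child subtree: in the left case the inductive bound gives $\sum_{i \geq 2} m_i \leq (\alpha-1)m$, and in the right case, using $\lfloor m/\alpha\rfloor \geq m/\alpha - 1$, one obtains $\sum_{i \geq 2} m_i \leq \alpha(m - m/\alpha) = \alpha m - m$. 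In either case the total is at most $\alpha m$.

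Combining the two ingredients gives
\[
\sum_{i=1}^p |B(k_i)| \leq 4 \cdot \frac{\log(\alpha m)}{\log(\alpha-1)} \sum_{i=1}^p m_i \leq 4\cdot \frac{\alpha m \log(\alpha m)}{\log(\alpha-1)},
\]
as desired. I do not expect a genuine obstacle here; the only subtle point is establishing $\alpha_i m_i \leq \alpha m$ along the chain, which is precisely what allows a uniform $\log(\alpha m)/\log(\alpha - 1)$ to factor out of the sum. Without this observation, the parameters $\alpha_i$ drifting upward along left moves would prevent a clean telescoping to a term of size $\alpha m$.
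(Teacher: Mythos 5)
Your proposal is correct, and it reaches the bound by a genuinely cleaner decomposition than the paper's. The paper proves the lemma by a single induction on $m$ that peels off $|B(k_1)|$ (bounded via Lemma~\ref{lem:imbalanced-barrier-size}) and applies the induction hypothesis to the child subtree containing $k_2,\dots,k_p$, then grinds through the resulting chain of inequalities with the log-factors still attached. You instead factor the argument into two independent pieces: an analytic observation that the parameter product $\alpha_i m_i$ is non-increasing along any root-to-descendant chain (so every term can be bounded by $4\,m_i\log(\alpha m)/\log(\alpha-1)$ with a \emph{uniform} log-prefactor), and a purely combinatorial claim $\sum_i m_i\le\alpha m$ proved by the same structural induction but free of logarithms. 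The inequalities you use ($\alpha(\alpha-1)\lfloor m/\alpha\rfloor\le(\alpha-1)m$ and $\lfloor m/\alpha\rfloor\ge m/\alpha-1$) are exactly the ones driving the paper's computation, so the two proofs have the same engine; your version just isolates the invariant that makes the telescoping work, which the paper leaves implicit inside its display. The one case distinction you need that the paper avoids (whether $k_1$ is the root, since $m_1=m$ only in that case) is handled correctly. I see no gap.
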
 

\begin{proof}
\begingroup
\allowdisplaybreaks % allow page break
    We proceed by induction on $\n$. The inequality is vacuously true for $\n=0$ as $\ct$ is empty.
    Let us first show that the inequality holds for $\n=1$.
    In that case, note that $\ct$ has one node, and hence $p=1$.
    By Lemma~\ref{lem:imbalanced-barrier-size} we have $|B(k_1)|\leq 4\cdot (\n\log (\alpha \n))/(\log (\alpha-1))$ which is less than the desired bound, so the statement holds. 
    
    Suppose now that $\n > 1$ and that the statement holds for any $\n'<\n$ and $\alpha'>2$.
    % Since this bound is increasing in $\n$ and decreasing in $\alpha$, we can consider only the biggest subtree of $T(\alpha,\n)$ among the left subtree $T(\alpha(\alpha-1),\lfloor \n/\alpha\rfloor)$ and the right subtree $T(\alpha,\n-\lfloor \n/\alpha)\rfloor-1)$. 
    We distinguish cases depending whether $k_2$ lies in the subtree rooted at the left or right child of the root of $\ct$. In each case we use Lemma~\ref{lem:imbalanced-barrier-size} to bound the size of $B(k_1)$ and the induction hypothesis for the rest of the sum, i.e., $\sum_{i=2}^p |B(k_i)|$.
    When $k_2$ is in the subtree rooted at the left child of the root of $\ct$, we have:
    \begin{align*}
        \sum_{i=1}^p |B(k_i)|&\le|B(k_1)|+\sum_{i=2}^p |B(k_i)|\\
        &\le 4\cdot \frac{\n\log(\alpha \n)}{\log(\alpha-1)}
        + 4\cdot \frac{\lfloor \n/\alpha\rfloor\log\big(\alpha(\alpha -1) \lfloor \n/\alpha\rfloor\big)(\alpha(\alpha-1))}{\log\big(\alpha(\alpha- 1)-1\big)}\\
        &\le 4\cdot \left(\frac{\n\log(\alpha \n)}{\log(\alpha-1)}+\frac{\n\log(\alpha \n)(\alpha(\alpha-1))}{\alpha\cdot\log(\alpha(\alpha- 1)-1)}\right)\\
        &\le 4\cdot \left(\frac{\n\log( \alpha \n)}{\log(\alpha-1)}+\frac{\n\log(\alpha \n)(\alpha-1)}{\log(\alpha- 1)}\right)\\
        &\le 4\cdot \frac{\n\log(\alpha \n)}{\log(\alpha-1)}\cdot (1+\alpha-1)\\
        &\le 4\cdot \frac{\n\log (\alpha \n)\alpha }{\log(\alpha-1)}.
    \end{align*}
    Similarly when $k_2$ lies in the right subtree we get:
    \begin{align*}
        \sum_{i=1}^p |B(k_i)|&\le|B(k_1)|+\sum_{i=2}^p |B(k_i)|\\
        &\le 4\cdot \left(\frac{\n\log(\alpha \n)}{\log(\alpha-1)}+\frac{(\n-\lfloor \n/\alpha\rfloor-1)\log(\alpha\cdot (\n-\lfloor \n/\alpha\rfloor-1))\alpha}{\log(\alpha-1)}\right)\\
        &\le 4\cdot \left(\frac{\n\log(\alpha \n)}{\log(\alpha-1)}+\frac{\n(1-1/\alpha)\log(\alpha \n)\alpha}{\log(\alpha-1)}\right)\\
        &\le4\cdot \frac{\n\log(\alpha \n)}{\log(\alpha-1)}\cdot(1+(1-1/\alpha)\alpha)\\
        &\le4\cdot \frac{\n\log(\alpha \n)\alpha}{\log(\alpha-1)}.
    \end{align*}
    Hence in both cases we get the desired bound, and by induction the result holds for every $m$ and $\alpha$ as claimed.
\endgroup
\end{proof}

\section{The proof of the main theorem}\label{sec:proof}

We are now ready to prove Theorem~\ref{thm:main-theorem}, that we restate below for convenience.

\restatemaintheorem*

\begin{proof}
Let $\ell\geq 3$ be some integer. We set $\ct:=T(3,\ell)$, $G:=G(\ct)$, and $n:=2^{2^\ell}$.
By Remark~\ref{rem:graph-size}, $G$ has $\Omega(n)$ vertices and by Lemma~\ref{lem:traceable} it has a Hamiltonian path, so it has a path of order $\Omega(n)$. By lemma~\ref{lem:degenerate}, $G$ is $2$-degenerate. It remains to show that $G$ does not have long induced paths.

Let $Q$ be an induced path of $G$ and consider a subpath $P$ of $Q$ of maximum order that starts from a triangle-vertex of largest rank in $Q$ and ends with a triangle-vertex.
Note that the order of $Q$ is bounded by twice the order of $P$, plus twice the size of $B(\ell)$, the latter part being a bound on the order of the extremities of $Q$ before the first triangle-vertex, and after the last.
Indeed, by construction, these extremities must lie in a single barrier each, as they do not contain any triangle-vertex.

Let $v_1,\dots,v_p$ be the sequence of vertices in $\specialseq(P)$, as defined in Section~\ref{sec:path-properties}.
Let $k_1,\dots,k_p$ be the indices such that $\ct(k_i)=\range(v_i)$ for every such $i$.
By Lemma~\ref{lem:total-length} we have that $\ct(k_{i+1})$ is a proper subtree of $\ct(k_i)$ for every $2\leq i\leq p$, and the total length of $P$ is bounded by 
\[
    13\cdot \left(\sum_{i=1}^{p} |B(k_i)|\right) +1 \leq 13\cdot\left( |B(\ell)| + \sum_{i=2}^{p} |B(k_i)|\right) + 1.
\]
Applying Lemma~\ref{lem:imbalanced-barrier-size} for the size of $B(\ell)$, and Lemma~\ref{lem:imbalanced-sum-of-barriers-size} for the right hand sum, with $\n=\ell$ and $\alpha=3$, the above is upper bounded by $624 \ell \log \ell$.
% Let $v_1,\dots,v_p$ be the sequence of vertices in $\specialseq(P)$, as defined in Section~\ref{sec:path-properties}.
% Let $k_1,\dots,k_p$ be the indices such that $\ct(k_i)=\range(v_i)$ for every such $i$.
% By Lemma~\ref{lem:total-length} we have that $\ct(k_{i+1})$ is a subtree of $\ct(k_i)$ for every $1\leq i\leq p$, and the total length of $P$ is bounded by 
% \[
%     10\cdot \sum_{i=1}^{p} |B(k_i)|.
% \]
% Applying Lemma~\ref{lem:imbalanced-sum-of-barriers-size} with $\n=\lfloor \ell/3 \rfloor$ and $\alpha=3$, the above is upper bounded by $40 \ell \log \ell$.
%

Applying Lemma~\ref{lem:imbalanced-barrier-size} with the same parameters for each of the two extremities of $Q$, we overall get that $Q$ has order at most $936 \cdot \ell \log \ell = O(\log\log n \cdot \log \log \log n)$, as desired.
This concludes the proof.
\end{proof}

\begin{remark}
We note that the asymptotic dependence in the size of $B(\ell)$ for the order of a longest induced path in our construction is tight, as an induced path may traverse a barrier corresponding to $B(\ell-1,1)$ between a node of rank $\ell-1$ and its parent of rank $1$. 
Indeed, this barrier has order $\Omega(|B(\ell)|)$.
However, it is not clear whether our choice of $\ct$ results in the smallest barrier, or if the value of $B(\ell)$ is $\Omega(\ell \log \ell)$ for our choice of $\ct$.
\end{remark}

\bibliographystyle{alpha}
\bibliography{main}

\end{document}